\renewcommand{\labelenumi}{(\theenumi)}
\newcommand{\n}{\nonumber}
\renewcommand{\hat}{\widehat}
\renewcommand{\tilde}{\widetilde}
\renewcommand{\bar}{\overline}
\newcommand{\un}[1]{\underline{#1}}
\newcommand{\absol}[1]{\left| #1 \right|} 
\newcommand{\rbra}[1]{\!\left( #1 \right)} 
\newcommand{\cbra}[1]{\!\left\{ #1 \right\}} 
\newcommand{\sbra}[1]{\!\left[ #1 \right]} 
\newcommand{\bE}{\ensuremath{\mathbb{E}}}
\newcommand{\bF}{\ensuremath{\mathbb{F}}}
\newcommand{\bN}{\ensuremath{\mathbb{N}}}
\newcommand{\bP}{\ensuremath{\mathbb{P}}}
\newcommand{\bR}{\ensuremath{\mathbb{R}}}
\newcommand{\cA}{\ensuremath{\mathcal{A}}}
\newcommand{\cB}{\ensuremath{\mathcal{B}}}
\newcommand{\cF}{\ensuremath{\mathcal{F}}}
\newcommand{\cL}{\ensuremath{\mathcal{L}}}
\newcommand{\cN}{\ensuremath{\mathcal{N}}}
\theoremstyle{plain}
\newtheorem{Thm}{Theorem}[section]
\newtheorem{Lem}[Thm]{Lemma}
\newtheorem{Cor}[Thm]{Corollary}
\theoremstyle{definition}
\newtheorem{Ass}[Thm]{Assumption}
\newtheorem{Rem}[Thm]{Remark}
\numberwithin{equation}{section}
\renewcommand\section{\@startsection {section}{1}{\z@}%
                                   {-3.5ex \@plus -1ex \@minus -.2ex}%
                                   {2.3ex \@plus.2ex}%
                                   {\normalfont\large\bf}}
\renewcommand\subsection{\@startsection {subsection}{1}{\z@}%
                                   {-3.5ex \@plus -1ex \@minus -.2ex}%
                                   {2.3ex \@plus.2ex}%
                                   {\normalfont\normalsize\bf}}
\begin{document}

\begin{center}
{\Large \bf 
On the optimality of the refraction--reflection strategies for L\'evy processes
}
\end{center}
\begin{center}
Kei Noba
\end{center}

\begin{abstract}
In this paper, we study de Finetti's optimal dividend problem with capital injection under the assumption that 
the dividend strategies are absolutely continuous. 
In many previous studies, the process before being controlled was assumed to be a spectrally one-sided L\'evy process, however in this paper we use a L\'evy process that may have both positive and negative jumps. 
In the main theorem, we show that a refraction--reflection strategy is an optimal strategy. 
We also mention the existence and uniqueness of solutions of the stochastic differential equations that define refracted L\'evy processes.
\\
\noindent \small{\noindent  \textbf{AMS 2020 Subject Classifications}: Primary 60G51; Secondary 93E20, 60J99 \\
\textbf{Keywords:} L\'evy process, stochastic control, optimal dividend problem}
\end{abstract}

\section{Introduction}
We consider the situation in which a company pays dividends to its shareholders when the company has more than enough capital. 
In this situation, the company wants to pay more dividends before bankruptcy. 
The problem of finding a strategy for maximizing the expected net present value (NPV) of the dividend payments is called de Finetti's optimal dividend problem. 
In this paper, in addition to the above situation, we assume that the shareholders provide capital injection to the company to prevent bankruptcy. 
In this situation, the company wants to pay more dividends and receive less capital injection. 
Our purpose in this paper is to find a strategy that maximizes the expected NPV of the dividend payments minus the cost of capital injection. Such a strategy is called an optimal strategy. 
We deal with this problem under the assumption that the cumulative amount of the dividends is absolutely continuous with respect to the Lebesgue measure about time and the density is bounded by the value $\alpha>0$. 
\par
Let $X=\{X_t: t\geq 0\}$ be the surplus process of the company before dividends are deducted and capital injections are added. 
When $X$ is a L\'evy process, many previous studies have shown the optimality of refraction strategies or refraction--reflection strategies 
in de Finetti's optimal dividend problem or its capital injection version, respectively. 
A refraction strategy is a strategy in which the company continues to pay a certain amount of dividends only when the surplus exceeds a certain threshold. 
Additionally, a refraction--reflection strategy is a strategy in which the company pays dividends according to a refraction strategy when the surplus is non-negative, and the shareholders immediately provide capital injection of only the absolute value of the surplus when the surplus is negative. 
In de Finetti's optimal dividend problem, the optimality of the refraction strategies was proved by \cite{JeaShi1995}, \cite{AsuTak1997}, and \cite{Bog2006} when $X$ is a Brownian motion with a drift, by \cite{GerShi2006} when $X$ is a Cr\'amer--Lundberg process with exponential jumps, by \cite{KypLoePer2012} when $X$ is a spectrally negative L\'evy process, and by \cite{YinWenZha2013} when $X$ is a spectrally positive L\'evy process. 
On the other hand, in the capital injection version, 
the optimality of the refraction--reflection strategies was proved by \cite{PerYam2016} when $X$ is a spectrally positive L\'evy process and by \cite{PerYamYu2018} when $X$ is a spectrally negative L\'evy process. 
{In addition, the optimality of the refraction strategies was proved by \cite{HerPerYam2016} in another problem of stochastic control when $X$ is a spectrally negative L\'evy process.} 
However, de Finetti's optimal dividend problems in the above situations for the general L\'evy processes, which may have both positive and negative jumps, 
are the open problems. 
In this paper, we deal with de Finetti's optimal problem with capital injection for general L\'evy processes and prove the optimality of refraction--reflection strategies. 
\par
In addition to dealing with the general L\'evy processes, this paper makes a new attempt at strategies about the refraction. 
In previous studies, it has been assumed that the drift coefficient of $X$ is not contained in $[0, \alpha]$ when $X$ has bounded variation paths. (In this paper, we refer to the case in which $X$ has bounded variation paths and the drift coefficient of $X$ is included in $[0, \alpha]$ as Case $2$, and we refer to the other cases as Case $1$; see, Section \ref{SubSec202}.) 
In this paper, we introduce new refraction--reflection strategies and prove their optimality in Case $2$. 
\par
Before proving the optimality of the refraction--reflection strategies, their existence is an important problem. The resulting processes of the refraction--reflection strategies are refracted--reflected processes, which were introduced by \cite{PerYam2018}. 
The construction of the refracted--reflected L\'evy processes involves the following two steps.
\begin{enumerate}
\item We define the refracted L\'evy processes using stochastic differential equations under non-Lipschitz conditions. 
\item We inductively define the refracted--reflected L\'evy processes by reflecting the refracted L\'evy processes at $0$ using the infimum. 
\end{enumerate}
For the existence of the refraction--reflection strategies, we must prove that the stochastic differential equations that define the refracted L\'evy processes have unique strong solutions. 
The existence and uniqueness of the strong solutions of these stochastic differential equations were proved by \cite{Sit2005} when $X$ has a positive Gaussian coefficient and by \cite{KypLoe2010} when $X$ is a spectrally one-sided L\'evy process in Case $1$. 
In addition, the existence and uniqueness of the strong solutions of the extended stochastic differential equations that define level-dependent L\'evy processes were proved by \cite{CzaPerRolYam2019} when $X$ is a spectrally one-sided L\'evy process. 
In this paper, we describe the existence and uniqueness of the strong solutions when general L\'evy process $X$ has bounded variation paths or has unbounded variation paths with a finite number of jumps on one side on a compact interval. 
Especially in Case $2$, we must introduce new refracted L\'evy processes, 
so we use a new idea for proving the existence of refracted L\'evy processes. 
The uniqueness 
can be proved as in the case of spectrally one-sided L\'evy processes, or {can be proved by \cite[Theorem 5.1]{Tsu2021}}. 
\par
We take the following steps to prove the optimality of the refraction--reflection strategies. 
\begin{enumerate}
\item From previous studies, we can predict the optimal threshold $b^\ast$. 
We obtain a good representation of the density of the expected NPV $v_{b^\ast}$ of dividends and capital injections when we take the refraction--reflection strategy at $b^\ast$. 
\item We give the verification lemma, which gives the sufficient condition for being an optimal strategy. 
Using the result in Step (i), we prove that the function $v_{b^\ast}$ satisfies the conditions that are given in the verification lemma. 
From this argument, we can see the optimality of the refraction--reflection strategy at $b^\ast$. 
\end{enumerate}
In the spectrally one-sided cases, the theory of the scale functions of spectrally one-sided L\'evy processes was at the core of the proof. 
However, there are no corresponding scale functions in the general L\'evy processes, and thus we cannot use the same proof. 
On the other hand, in recent years, \cite{Nob2021} and \cite{NobYam2022} have proved the optimality of the strategies for the problems of stochastic control using the general L\'evy processes. 
\cite{Nob2021} proved the optimality of the double barrier strategies for de Finetti's optimal dividend problem with capital injection (there, the absolute continuity of the cumulative amount of the dividends was not assumed), and \cite{ NobYam2022} proved the optimality of the barrier strategies for the singular control problem. 
In those papers, we investigated the properties of the expected NPV of some values in some strategies by observing the differences in the behavior of two slightly staggered sample paths, and we proved the optimality of the strategies.   
We use the same method for Step (i) of the proofs in this paper. 
The difficulty in the setting of this paper lies in the fact that the variation of the differences in the behavior of two slightly staggered sample paths is much more complicated in the case of the strategies about refraction than in the case of the strategies about only reflection. 
An important aspect of this paper is that this difficult part is solved and Step (i) of the proof is completed. 
The proofs used in this paper are expected to be applied to prove the optimality of various strategies about refraction {for many problems of stochastic control using the L\'evy processes}. 
\par
This paper is organized as follows. In Section \ref{Pre}, we describe the notation and give some
assumptions about L\'evy processes, and we recall refracted and refracted--reflected L\'evy processes. 
In Section \ref{OptimalDividend}, we give a mathematical setting for de Finetti's optimal dividend problem with capital injection that we deal with in this paper, 
and we recall the definition of refraction--reflection strategies and confirm that they are admissible. 
In Section \ref{bast}, we compute the densities of the expected NPVs of dividends and capital injections when we take an optimal refraction--reflection strategy at $b^\ast$. 
In Section \ref{Sec04}, we give the verification lemma and prove the optimality of a refraction--reflection strategy at $b^\ast$ using the results in Section \ref{bast} and the verification lemma; the main result is in this section. 
In Section \ref{numericalresults}, we give numerical results from Monte Carlo simulation, 
confirming numerically the correctness of our main results. 
{In Section \ref{Sec007}, we prove that the expected NPV of an optimal dividend payments and capital injections in this paper converges to that in \cite{Nob2021} as $\alpha\uparrow\infty$.} 
In Appendix \ref{Sec0A}, we represent the infimum of the bounded variation functions using the stagnation time at $0$ of the function reflected at $0$ and the jumps; the result in this section is used in the proofs of results in Appendix \ref{ODE} and 
Appendix \ref{Sec00B}. 
In {Appendix} \ref{ODE}, we give the existence 
of the solution of an ordinary differential equation; 
the results obtained in this section are used to prove the existence of the refracted L\'evy processes in the case of bounded variation. {(The uniqueness of the solution can be proved in the same way as the previous studies, thus we omit it.)}
In Appendix \ref{Sec00B}, we prove that the resulting processes of refraction--reflection strategies are refracted--reflected L\'evy processes. 
In Appendix \ref{two_paths}, we investigate the behavior of two paths of refracted--reflected L\'evy processes with slightly offset starting positions; 
the results in this section are used to obtain the densities of the expected NPVs of dividends and capital injections of the refraction--reflection strategy at $b^\ast$ in Section \ref{bast}. 
In Appendix \ref{Sec0D}, we give a lemma for the continuity of hitting times; 
this lemma is also used to compute the densities of the expected NPVs of dividends and capital injections and to prove an inequality that appears in the verification lemma. 
In Appendix \ref{Ape00D}, we give an example that satisfies the assumptions of the L\'evy processes with unbounded variation paths to which our main results can be applied. 
In Appendix \ref{prfverification}, we give the proof of the verification lemma. 
{In Appendix \ref{Ape00H}, we give the proofs of two lemmas in Section \ref{Sec007}.}

\section{Preliminaries} \label{Pre}
\subsection{L\'evy processes}
We let $X{:=}\{X_t : t \geq 0 \}$ be a L\'evy process that is defined on a probability space $(\Omega , \cF , \bP)$. 
For all $x\in\bR$, we write $\bP_x$ for the law of $X$ when it starts at $x$. 
We write $\Psi$ for the characteristic exponent of $X$, which is the function satisfying 
\begin{align}
e^{-t\Psi(\lambda)}=\bE_0\sbra{e^{i\lambda X_t}},  \quad \lambda \in \bR, \quad t\geq 0. 
\end{align}   
The characteristic exponent $\Psi$ has the form  
\begin{align}
\Psi (\lambda) = -i\gamma\lambda +\frac{1}{2}\sigma^2 \lambda^2 
+\int_{\bR\backslash\{0\}} (1-e^{i\lambda x}+i\lambda x1_{\{\absol{x}<1\}}) \Pi(dx) , \quad\lambda\in\bR, 
\end{align}
where $\gamma\in\bR$, $\sigma\geq 0$, and $\Pi $ is a L\'evy measure on $\bR \backslash \{0\}$ satisfying 
\begin{align}
\int_{\bR\backslash \{ 0\}}(1\land x^2) \Pi (dx). 
\end{align}
It is known that the process $X$ has bounded variation paths if and only if $\sigma= 0$ and the L\'evy measure $\Pi$ satisfies $\int_{(-1, 1)\backslash\{0\}} \absol{x}\Pi(dx)<\infty$. 
When this holds, we denote
\begin{align}
\Psi(\lambda)= -i\delta\lambda+\int_{\bR\backslash\{0\}}(1-e^{i\lambda x})\Pi (dx),  \label{78}
\end{align}
where
\begin{align}
\delta {:=} \gamma-\int_{(-1, 1)\backslash\{0\}}x \Pi(dx). 
\end{align}
\par
We write $\bF=\{\cF_t : t\geq 0\}$ for the natural filtration generated by $X$. 
For $x\in \bR$, we define the hitting times of $X$ by  
\begin{align}
\tau^-_x{:=}\inf\{ t>0: X_t < x\}, \qquad \tau^+_x{:=}\inf\{t> 0 : X_t > x\}.
\end{align}
Here, we assume that $\inf \emptyset {:=} \infty$. 
We fix the value $\alpha>0$ throughout this paper. 
We define the process $X^{(\alpha)}{:=}\{X^{(\alpha)}_t: t\geq 0\}$ as 
\begin{align}
X^{(\alpha)}_t {:=} X_t -\alpha t, \quad t\geq 0. 
\end{align}
For $x \in \bR$, we define a hitting time of $X^{(\alpha)}$ by 
\begin{align}
\tau^{(\alpha),-}_x{:=}\inf\{ t>0: X^{(\alpha)}_t < x\} .
\end{align}
\par
The following assumption is necessary 
for strategies to be admissible (see Remark \ref{Rem302}). 
\begin{Ass}\label{Ass201}
We assume that the L\'evy measure $\Pi$ satisfies 
\begin{align}
\int_{(-\infty , -1)} (-x) \Pi (dx)< \infty. \label{61}
\end{align}
\end{Ass}

\subsection{Refracted L\'evy processes}\label{SubSec202}
The L\'evy process $X$ has the following two cases.
\begin{itemize}
\item[Case $1$] $X$ has unbounded variation paths or has bounded variation paths with $\delta \not \in [0, \alpha]$. 
\item[Case $2$] $X$ has bounded variation paths and $\delta \in [0 , \alpha ]$.
\end{itemize}
In each case, for $b\in\bR$, we define the process $Y^b$ as the solution of the stochastic differential equation 
\begin{align}
Y^b_t = X_t -  \int_0^t h^b (Y^b_s) d s,  \quad t \geq 0, \label{4}
\end{align}
where 
\begin{align}
h^b(y){:=}
\begin{cases}
\alpha1_{(b, \infty)}(y) \quad &\text{ in Case $1$}, \\
\alpha1_{(b, \infty)}(y)+\delta 1_{\{b\}}(y) \quad &\text{ in Case $2$}.
\end{cases}
\end{align}
The process $Y^b$ is called a refracted L\'evy process at $b$. 
We give the following assumption for $X$.  
\begin{Ass}\label{Ass202a}
The stochastic differential equation \eqref{4} has a unique strong solution. 
\end{Ass}
\begin{Rem}\label{Rem203a}
It is known that $X$ satisfies Assumption \ref{Ass202a} in the following cases.
\begin{enumerate}
\item If the process $X$ is a spectrally negative L\'evy process, which is a L\'evy process with no positive jumps and no monotone paths, and satisfies the condition in Case $1$, 
then \eqref{4} has a unique strong solution by \cite[Theorem 1]{KypLoe2010}. 
By the same argument as that of the proof of \cite[Theorem 1]{KypLoe2010}, it is easy to check that 
if the process $X$ is a spectrally positive L\'evy process, which is a L\'evy process with no negative jumps and no monotone paths, then \eqref{4} has a unique strong solution. 
\item If the process $X$ has a positive Gaussian coefficient, i.e., $\sigma>0$, and satisfies some conditions, then \eqref{4} has a unique strong solution by \cite[Theorem 305]{Sit2005}.
\end{enumerate}
\end{Rem}
In addition to Remark \ref{Rem203a}, we give the following. 
\begin{Cor}\label{Lem205}
When $X$ has unbounded variation paths and satisfies $\Pi(-\infty , 0)$ or $\Pi (0, \infty)<\infty$, the stochastic differential equation \eqref{4} has a unique strong solution. 
\end{Cor}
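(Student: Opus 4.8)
The plan is to reduce the claim to the two known situations recalled in Remark~\ref{Rem203a} by a pathwise decomposition of $X$ into a spectrally one-sided part plus a compound Poisson part. Suppose without loss of generality that $\Pi(0,\infty)<\infty$ (the case $\Pi(-\infty,0)<\infty$ is symmetric, using the spectrally positive analogue of \cite[Theorem~1]{KypLoe2010} noted in Remark~\ref{Rem203a}(i)). Write $X=\xi+J$, where $J$ is the compound Poisson process collecting the positive jumps of $X$ (finitely many on each compact interval, by hypothesis) and $\xi$ is the remaining spectrally negative L\'evy process. Since $X$ has unbounded variation paths and $\xi$ differs from $X$ only by a finite-variation pure-jump process, $\xi$ also has unbounded variation paths, so $\xi$ satisfies the Case~$1$ hypothesis and hence, by Remark~\ref{Rem203a}(i), the refracted SDE driven by $\xi$ has a unique strong solution for every threshold.

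First I would construct the solution of \eqref{4} by a pathwise interlacing procedure over the successive jump times $0=T_0<T_1<T_2<\cdots$ of $J$. On the stochastic interval $[T_{n-1},T_n)$ the driving noise is $\xi$ (shifted), so by the spectrally negative existence result there is a unique strong solution $Y^b$ of $dY^b_t=d\xi_t-h^b(Y^b_t)\,dt$ started from the value $Y^b_{T_{n-1}}$; at $T_n$ we set $Y^b_{T_n}=Y^b_{T_n-}+\Delta J_{T_n}$ and restart. Because $J$ has only finitely many jumps on any compact interval, this defines $Y^b$ on all of $[0,\infty)$ (no explosion of jump times), it is adapted to $\bF$, and by construction it solves \eqref{4}: the added drift $-\int_0^t h^b(Y^b_s)\,ds$ is unchanged by the interlacing, and the jumps of $Y^b$ are exactly the jumps of $J$ together with the (negative) jumps of $\xi$, i.e.\ the jumps of $X$. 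Uniqueness follows the same way: any strong solution of \eqref{4}, restricted to $[T_{n-1},T_n)$, solves the spectrally negative SDE with the same initial value, so it agrees with our $Y^b$ there by the uniqueness in Remark~\ref{Rem203a}(i); matching the forced jump at $T_n$ and inducting on $n$ gives pathwise uniqueness on all of $[0,\infty)$.

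The main obstacle I anticipate is the bookkeeping at the interlacing times when $b$ lies in an awkward position relative to the drift — but since $X$ has unbounded variation paths we are automatically in Case~$1$, so $h^b=\alpha 1_{(b,\infty)}$ with no point mass at $b$, and the refracted SDE for $\xi$ is exactly the one covered by \cite[Theorem~1]{KypLoe2010}; there is no Case~$2$ subtlety to handle here. The only genuine point to check carefully is that the spectrally negative solution on $[T_{n-1},T_n)$ extends continuously (from the left) to $T_n$ and has a well-defined left limit $Y^b_{T_n-}$, which holds because refracted spectrally negative L\'evy processes are c\`adl\`ag; then the restart is unambiguous. Alternatively, the uniqueness half can simply be cited from \cite[Theorem~5.1]{Tsu2021} as the paper notes, leaving only the existence half to be argued by the interlacing above.
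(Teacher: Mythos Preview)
Your proposal is correct and follows essentially the same route as the paper: both decompose $X$ into a spectrally one-sided part plus a finite-activity compound Poisson part, then build the solution inductively over the successive jump times of the latter by invoking Remark~\ref{Rem203a}(i) on each inter-jump interval. The paper phrases this as ``using Remark~\ref{Rem203a}(i) again each time a negative (resp.\ positive) jump of $X$ appears'' and omits the details; your interlacing write-up is a faithful expansion of that sketch, with the correct observation that unbounded variation forces Case~$1$ so no point mass in $h^b$ arises.
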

If $X$ has unbounded variation paths and $\Pi(-\infty , 0)<\infty$, by using Remark \ref{Rem203a} (i) again each time a negative jump of $X$ appears, a unique strong solution of \eqref{4} can be constructed inductively. The same is true for the case in which $X$ has unbounded variation paths and $\Pi(0, \infty )<\infty$. Thus, Corollary \ref{Lem205} can be proved soon by Remark \ref{Rem203a} (i). 
We omit the detail of the proof. 
\begin{Thm}\label{Lem204}
When $X$ has bounded variation paths, the stochastic differential equation \eqref{4} has a unique strong solution. 
\end{Thm}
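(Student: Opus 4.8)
The plan is to solve \eqref{4} pathwise. Since $X$ has bounded variation paths, for almost every $\omega$ the function $t\mapsto X_t(\omega)$ is c\`adl\`ag of bounded variation, so $X_t=X_0+\delta t+\sum_{0<s\le t}\Delta X_s$ with $\sum_{0<s\le T}|\Delta X_s|<\infty$ for every $T$. Because $h^b$ is bounded (indeed $0\le h^b\le\alpha$), any solution $Y^b$ of \eqref{4} again has bounded variation paths and $t\mapsto\int_0^t h^b(Y^b_s)\,ds$ is $\alpha$-Lipschitz; hence it suffices to solve, for each fixed bounded-variation c\`adl\`ag driver, the deterministic integral equation $y_t=X_t-\int_0^t h^b(y_s)\,ds$ by a recipe that is a measurable functional of the driver, which automatically makes the resulting solution of \eqref{4} strong.

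Uniqueness is the easy half, and I would dispose of it first. If $Y^1,Y^2$ are two solutions with the same starting point, then $D_t:=Y^1_t-Y^2_t=-\int_0^t\bigl(h^b(Y^1_s)-h^b(Y^2_s)\bigr)\,ds$ is absolutely continuous with $D_0=0$; since $h^b$ is non-decreasing --- this is exactly where the constraint $0\le\delta\le\alpha$ of Case $2$ is needed --- one gets $\frac{d}{dt}D_t^2=-2\bigl(Y^1_t-Y^2_t\bigr)\bigl(h^b(Y^1_t)-h^b(Y^2_t)\bigr)\le 0$ for almost every $t$, so $D_t\equiv 0$. (Alternatively one invokes \cite[Theorem 5.1]{Tsu2021}.) Combined with the strong existence below, this yields a unique strong solution.

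For existence I would first handle the finite-activity case. If $X$ has only finitely many jumps on each compact interval, list its jump times $0<T_1<T_2<\cdots$; on each interval $[T_{k-1},T_k)$ the driver is affine with slope $\delta$, so \eqref{4} reduces to the ordinary differential equation $\dot\phi_t=\delta-h^b(\phi_t)$, whose well-posedness --- including the behavior at the discontinuity of $h^b$ at $b$, where in Case $1$ the flow passes through $b$ and in Case $2$ the point $b$ is absorbing --- is the content of Appendix \ref{ODE}. Concatenating these ODE arcs and inserting the jump $\Delta X_{T_k}$ at each $T_k$ produces the unique, manifestly adapted solution. In particular this gives the solution $Y^{\epsilon}$ of the equation driven by the truncation $X^{\epsilon}_t:=X_0+\delta t+\sum_{0<s\le t}\Delta X_s\,1_{\{|\Delta X_s|>\epsilon\}}$, which has only finitely many jumps on compacts.

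Finally I would pass to the general bounded-variation case by letting $\epsilon\downarrow0$. Since $\sup_{t\le T}|X_t-X^{\epsilon}_t|\le\sum_{0<s\le T}|\Delta X_s|\,1_{\{|\Delta X_s|\le\epsilon\}}\to0$ almost surely, the drivers converge uniformly on compacts; rerunning the uniqueness computation for the difference $Y^{\epsilon}-Y^{\epsilon'}$, now with a forcing term controlled by $\alpha\,\|X^{\epsilon}-X^{\epsilon'}\|_{\infty,[0,T]}$, shows that $(Y^{\epsilon})_{\epsilon>0}$ is uniformly Cauchy on compacts. Its limit $Y^b$ is c\`adl\`ag, adapted, and of bounded variation, and $K^{\epsilon}_t:=\int_0^t h^b(Y^{\epsilon}_s)\,ds=X^{\epsilon}_t-Y^{\epsilon}_t$ converges uniformly on compacts to an $\alpha$-Lipschitz, non-decreasing limit $K_t=X_t-Y^b_t$; writing $K_t=\int_0^t k_s\,ds$ with $0\le k_s\le\alpha$, it remains to check $k_s=h^b(Y^b_s)$ for almost every $s$. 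On $\{s:Y^b_s>b\}$ and $\{s:Y^b_s<b\}$ --- which, up to Lebesgue-null sets, are open --- the uniform convergence $Y^{\epsilon}\to Y^b$ forces $h^b(Y^{\epsilon}_s)$ to equal $\alpha$, respectively $0$, eventually, whence $k_s=\alpha$, respectively $k_s=0$, there. The main obstacle, and the point at which Case $2$ genuinely departs from earlier work, is the set $E:=\{s:Y^b_s=b\}$, which can carry positive Lebesgue measure and on which $h^b(Y^{\epsilon}_s)$ need not converge. Here I would observe that the absolutely continuous part of the bounded-variation function $Y^b$ has density $\delta-k_s$ (the jump part of $X$ being singular, hence contributing no absolutely continuous part), and then invoke the classical fact that the density of the absolutely continuous part of a bounded-variation function vanishes almost everywhere on each of its level sets, to conclude $k_s=\delta=h^b(b)$ for almost every $s\in E$; keeping track of these stagnation intervals of $Y^b$ at $b$ together with the jumps is precisely what the infimum representation of Appendix \ref{Sec0A} supplies in the background. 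Assembling the three cases gives $k_s=h^b(Y^b_s)$ almost everywhere, hence $Y^b_t=X_t-\int_0^t h^b(Y^b_s)\,ds$, so $Y^b$ is the desired strong solution.
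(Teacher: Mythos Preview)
Your argument is correct in outline but is organized differently from the paper's. The paper dispatches Theorem~\ref{Lem204} in one line by observing that for fixed $\omega$ equation~\eqref{4} is a deterministic integral equation and then invoking Theorem~\ref{Thm601}; all the content sits in Appendix~\ref{ODE}. There, existence is first established for drivers with finitely many jumps of \emph{one} sign (Cases~B and~C) by an explicit concatenation of linear arcs with arcs \emph{reflected} at $b$ --- this is where Lemma~\ref{LemB01} of Appendix~\ref{Sec0A} is actually used --- and then extended to general drivers by removing small jumps of one sign only, producing a \emph{monotone} two-sided family of approximants whose sandwich \eqref{25}--\eqref{27} identifies the limiting integral without ever inspecting the level set $\{y=b\}$. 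Your route differs at both stages. Your finite-activity step is simpler (the elementary flow $\dot\phi=\delta-h^b(\phi)$ between jumps) and does not need Appendix~\ref{Sec0A}; but citing Appendix~\ref{ODE} for this flow is circular, since that appendix \emph{is} the full deterministic result. In the general step your symmetric truncation loses monotonicity, so you must supply (a) a genuine Cauchy estimate --- the clean version is $\sup_{s\le t}|Y^{\epsilon}_s-Y^{\epsilon'}_s|\le\sum_{s\le t}|\Delta X_s|\,1_{\{\epsilon'<|\Delta X_s|\le\epsilon\}}$, obtained by applying the comparison \eqref{24} once to the small positive jumps and once to the small negative ones, rather than the loose ``rerunning the uniqueness computation with a forcing term controlled by $\alpha\|X^\epsilon-X^{\epsilon'}\|_\infty$'' --- and (b) the level-set derivative fact to pin down $k_s=\delta$ on $\{Y^b=b\}$. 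That level-set device is a legitimate alternative to the paper's sandwich and handles the Case~2 stagnation directly; your closing reference to Appendix~\ref{Sec0A}, however, is misplaced, since your argument never uses it.
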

When $X$ has bounded variation paths, \eqref{4} is an ordinary differential equation for fixed $\omega\in\Omega$. 
Therefore, by using Theorem \ref{Thm601} for each path, Theorem \ref{Lem204} can be proved directly.
\par
For $b\in \bR$ and $x\in\bR$, we write 
\begin{align}
\kappa^{b, -}_x{:=}\inf \{t> 0 : Y^b_t < x\}, \qquad 
\kappa^{b, +}_x{:=}\inf \{t> 0 : Y^b_t > x\}. 
\end{align}

\subsection{Refracted--reflected L\'evy processes}\label{SecRefRef}
In this section, we recall the refracted--reflected L\'evy processes that are defined in \cite[Section 2.2]{PerYam2018}. 
For a L\'evy process $X$ satisfying Assumption \ref{Ass202a}, we define the refracted--reflected process $Z^b{:=}\{Z^b_t:t\geq 0\}$ at $b> 0$ as follows. 
\begin{itemize}
\item[Step $0$] Set $Z^b_{0-}=X_0$ and $\bar{T}=\un{T}=0$. If $X_0>0$, go to Step $1$. If $X_0\leq 0$, go to Step $2$. 
\item[Step $1$] Let $\{A_t: t\geq \bar{T}\}$ be the refracted L\'evy process at $b$ that satisfies 
\begin{align}
A_t=Z^b_{\bar{T}-}+(X_t -X_{\bar{T}-})-\int_{\bar{T}}^th^b(A_{{s}})d{s}, \quad t\geq \bar{T}.
\end{align}
We redefine $\un{T}{:=}\inf\{t>\bar{T}: A_t\leq 0\}$. We set $Z^b_t = A_t$ for $t\in[\bar{T}, \un{T})$. 
Go to Step $2$. 
\item[Step $2$] Let $\{A_t: t\geq \un{T}\}$ be the process that satisfies 
\begin{align}
A_t=X_t-X_{\un{T}} -\inf_{{s}\in[\un{T}, t]}(X_{{s}}-X_{\un{T}}), \quad t\geq \un{T}. 
\end{align}
We redefine $\bar{T}{:=}\inf\{t>\un{T}: A_t\geq b\}$. We set $Z^b_t = A_t$ for $t\in[\un{T}, \bar{T})$. 
Go to Step $1$. 
\end{itemize}
From the definition above, it is easy to check that $Z^b$ satisfies
\begin{align}
Z^b_t=X_t -\int_0^t h^b(Z^b_s) ds -\inf_{u\in[0,t]} \rbra{\rbra{X_u -\int_0^u h^b(Z^b_s) ds }\land 0}, \quad t\geq 0.\label{125}
\end{align}
\par
We define the refracted--reflected L\'evy process at $0$ as 
\begin{align}
Z^{0}_t=X^{(\alpha)}_t-\inf_{s\in[0, t]}\rbra{X^{(\alpha)}_s\land 0}, \quad t\geq 0. \label{36}
\end{align}
\begin{Rem}\label{Rem206}
By \cite[Remark 3]{KypLoe2010}, the refracted L\'evy process constructed from the spectrally negative L\'evy process $X$ in Case $1$ has the strong Markov property with respect to $\bF$. 
By the same argument as \cite[Remark 3]{KypLoe2010}, we can check that the refracted L\'evy processes and the refracted--reflected L\'evy processes defined in this paper have the strong Markov property. 
\end{Rem}
\section{Optimal dividend problem with capital injection} \label{OptimalDividend}
In this section, we describe the problem in this paper and recall the refracted--reflected strategies. 
\subsection{Setting of the problem}\label{SubSec301}
We fix 
the cost per unit injected capital $\beta>1$ and the discount rate $q>0$. 
In this paper, a strategy $\pi$ is a set of the processes $L^\pi{:=}\{L^\pi_t:t\geq0\}$ and $R^\pi{:=}\{R^\pi_t:t\geq0\}$
satisfying the following conditions. 
\begin{enumerate}
\item There exists the process $l^\pi{:=}\{l^\pi_t:t\geq0\}$ that is progressively measurable with respect to $\bF$ 
and satisfies 
\begin{align}
l^\pi_t \in [0, \alpha], \quad L^\pi_t=\int_0^t l^\pi_s ds, \quad t\geq 0. \label{5}
\end{align}
\item The process $R^\pi$ is $\bF$-adapted, non-decreasing, right-continuous, and satisfies
\begin{align}
X_t - L^\pi_t+R^\pi_t \geq 0, \quad t\geq 0. \label{91}
\end{align} 
\end{enumerate}
Then, the resulting process $\{U^\pi_t: t\geq 0\}$ of the strategy $\pi$ satisfies 
\begin{align}
U^\pi_t =X_t-L^\pi_t +R^\pi_t= X_t-\int_0^t l^\pi_s ds+R^\pi_t, \quad t\geq 0. \label{Rev020}
\end{align}
The expected NPV $v_\pi$ of the dividend payments minus the cost of capital injections when we use the strategy $\pi$ is written as 
\begin{align}
v_\pi(x){:=} \bE_x \sbra{\int_{[0, \infty)}e^{-qt} dL^\pi_t -\beta\int_{[0, \infty)} e^{-qt}dR^\pi_t } =\bE_x \sbra{\int_0^\infty e^{-qt} l^\pi_tdt -\beta\int_{[0, \infty)} e^{-qt}dR^\pi_t}. 
\end{align}
Let $\cA$ be the set of all strategies that satisfy 
\begin{align}
\bE_x \sbra{\int_{[0, \infty)} e^{-qt} dR^\pi_t}< \infty, \quad x\in\bR.  \label{90}
\end{align}
The purpose of this study is to find a strategy $\pi^\ast\in\cA$ that meets the following condition: 
\begin{align}
v_{\pi^\ast}(x) = \sup_{\pi\in\cA}v_\pi(x), \quad x \in \bR.  \label{3}
\end{align}
The strategy $\pi^\ast$ satisfying \eqref{3} is called an optimal strategy. 
\begin{Rem}\label{Rem203}
Since all strategies $\pi$ satisfy \eqref{5}, we have 
\begin{align}
\sup_{x\in\bR}v_\pi(x) \leq\alpha \int_0^\infty e^{-qt}dt =\frac{\alpha}{q}. \label{6}
\end{align}
\end{Rem}
\begin{Rem}\label{Rem302}
If $X$ does not satisfy \eqref{61}, then for all strategies $\pi$, 
we have  
\begin{align}
\bE_x \sbra{\int_{[0, \infty)} e^{-qt} dR^\pi_t}= \infty, \quad x\in\bR, \label{92}
\end{align} 
and $\cA$ is an empty set. 
To demonstrate, we assume that $X$ does not satisfy \eqref{61} and fix a strategy $\pi$ and a value $x\in\bR$. 
It is sufficient to prove $\bE_x \sbra{R^\pi_1}=\infty$ to show \eqref{92}. 
Since $\pi$ satisfies \eqref{91}, we have 
\begin{align}
-\bE_x\sbra{\inf_{s\in[0,1]} (X_s \land 0)}\leq  \bE_x \sbra{R^\pi_1}. \label{93}
\end{align}
We define the stopping time $\tau^{J, -}_{ -1}$ as 
\begin{align}
\tau^{J, -}_{ -1}{:=}\inf\{t>0 : X_t- X_{t-}<-1\}. 
\end{align}
By the definition of $X$, there exists the value $d\in\bR$ that satisfies 
\begin{align}
\bP_x \rbra{B_d }>0, 
\end{align}
where 
\begin{align}
B_d = \cbra{\sup_{s\in[0, \tau^{J, -}_{ -1}]}X_s <d,\tau^{J, -}_{ -1}\leq 1}\in \cF. 
\end{align}
Since $X_{\tau^{J, -}_{ -1}}-X_{\tau^{J, -}_{ -1}-}$ has the distribution $\frac{\Pi( \cdot \cap (-\infty , -1) )}{\Pi(-\infty , -1)}$ and is independent from $\sup_{s\in[0, \tau^{J, -}_{ -1}]}X_s$ and $\tau^{J,-}_{-1}$, we have 
\begin{align}
\bE_x\sbra{\inf_{s\in[0,1]} (X_s \land 0); B_d}\leq& \bE_x\sbra{d+X_{\tau^{J,-}_{-1}}-X_{\tau^{J,-}_{-1}-}; B_d}\\
\leq&\bP_x\rbra{B_d}\frac{1}{\Pi(-\infty , -1)} \int_{(-\infty , -1)} (d+y) \Pi(dy) =-\infty, \label{94}
\end{align}
where in the last equality we used the fact that $X$ does not satisfy \eqref{61}. 
By \eqref{93} and \eqref{94}, we obtain $\bE_x \sbra{R^\pi_1}=\infty$. 
\end{Rem}

\subsection{Refraction--reflection strategies}\label{Sec302}
The refraction--reflection strategy $\pi^b$ at $b\geq 0$ 
is defined as 
\begin{align}
l^{\pi^b}_t {:=}&
h^{b}(Z^b_t), 
\quad
R^{\pi^b}_t {:=}
-\inf_{s\in[0, t]} \rbra{\rbra{ X_s -L^{\pi^b}_s  } \land 0}. 
\label{38}
\end{align}
Then, the resulting process $U^{\pi^b}$ {with $b>0$} corresponds to $Z^b$ by \eqref{125}. 
{In addition,} we confirm that the process $U^{\pi^0}$ corresponds to the process $Z^0$ in Appendix \ref{Sec00B}.
Thus, 
the refraction--reflection strategy $\pi^b$ is a strategy such that the company pays dividends of $ \alpha $ or $ \delta $ when the surplus process exceeds or equals 
$b$ and receives a capital injection when it falls below $0$. 
From this fact, the refraction--reflection strategies exist.  
\begin{Lem}
The refraction--reflection strategy $\pi^b$ at $b\geq 0$ belongs to $\cA$. 
\end{Lem}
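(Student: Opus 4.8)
The plan is to check, for $\pi=\pi^b$, the two structural clauses (i)--(ii) in the definition of a strategy and then the integrability condition \eqref{90}, which together give $\pi^b\in\cA$.

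For the structural part: by \eqref{38} the dividend density is $l^{\pi^b}_t=h^b(Z^b_t)$, and since $h^b$ takes only the values $0,\alpha$ in Case $1$ and $0,\delta,\alpha$ in Case $2$ --- with $\delta\in[0,\alpha]$ in Case $2$ by definition --- we have $l^{\pi^b}_t\in[0,\alpha]$; progressive measurability is inherited from the c\`adl\`ag $\bF$-adapted process $Z^b$ (whose existence under Assumption \ref{Ass202a} has already been recorded) through the Borel function $h^b$. The process $R^{\pi^b}_t=-\inf_{s\in[0,t]}\big((X_s-L^{\pi^b}_s)\wedge0\big)$ is, being the negative running infimum of a c\`adl\`ag $\bF$-adapted process, non-decreasing, right-continuous, $\bF$-adapted, and non-negative. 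Finally, by \eqref{125} when $b>0$ and by the identification carried out in Appendix \ref{Sec00B} when $b=0$, the resulting process satisfies $X_t-L^{\pi^b}_t+R^{\pi^b}_t=U^{\pi^b}_t=Z^b_t$, which is non-negative by the construction in Section \ref{SecRefRef} (resp.\ by \eqref{36}); hence \eqref{91} holds and $\pi^b$ is a strategy.

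For \eqref{90} the key is a pathwise domination of $R^{\pi^b}$ by the reflection term of $X^{(\alpha)}$. Since $L^{\pi^b}_s=\int_0^s l^{\pi^b}_u\,du\le\alpha s$, we get $X_s-L^{\pi^b}_s\ge X^{(\alpha)}_s$ for all $s$, hence
\begin{align}
R^{\pi^b}_t\ \le\ \bar{I}_t\ :=\ -\inf_{s\in[0,t]}\big(X^{(\alpha)}_s\wedge0\big),\qquad t\ge0 .
\end{align}
Writing $e^{-qt}=q\int_t^\infty e^{-qs}\,ds$ and using Tonelli (pathwise, then for the expectation) gives $\bE_x\sbra{\int_{[0,\infty)}e^{-qt}dR^{\pi^b}_t}=q\int_0^\infty e^{-qt}\bE_x[R^{\pi^b}_t]\,dt\le q\int_0^\infty e^{-qt}\bE_x[\bar{I}_t]\,dt$, so it suffices to bound $\bE_x[\bar{I}_t]$ by an at most linear function of $t$. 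For this I would write $\bar{I}_t\le|x|+\sup_{s\in[0,t]}\hat{X}_s$, where $\hat{X}_s:=x-X^{(\alpha)}_s$ is, under $\bP_x$, a L\'evy process started at $0$ whose L\'evy measure $\Pi(-\,\cdot\,)$ satisfies $\int_{(1,\infty)}y\,\Pi(-dy)=\int_{(-\infty,-1)}(-y)\Pi(dy)<\infty$ by Assumption \ref{Ass201}; this is the classical condition ensuring $\bE[\hat{X}_1^+]<\infty$ and hence $c:=\bE[\sup_{s\in[0,1]}\hat{X}_s]<\infty$. Since $t\mapsto\bE[\sup_{s\in[0,t]}\hat{X}_s]$ is non-negative and subadditive (stationarity and independence of increments, together with $\sup_{s\in[0,u]}\hat{X}_s\ge0$), it is $\le(t+1)c$, whence
\begin{align}
\bE_x\sbra{\int_{[0,\infty)}e^{-qt}dR^{\pi^b}_t}\ \le\ q\int_0^\infty e^{-qt}\big(|x|+(t+1)c\big)\,dt\ =\ |x|+\rbra{1+\tfrac1q}c\ <\ \infty ,
\end{align}
which is \eqref{90}; therefore $\pi^b\in\cA$.

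The routine parts --- the values of $h^b$, regularity of a running infimum, the Tonelli rearrangements --- are immediate, and the only ingredient imported from outside the paper is the standard fact that a L\'evy process whose L\'evy measure integrates $y$ near $+\infty$ has integrable running supremum over bounded time intervals. I expect the only mildly delicate point to be keeping the identity $U^{\pi^b}=Z^b$ and the domination $R^{\pi^b}_t\le\bar{I}_t$ uniform in $b\ge0$, in particular the boundary case $b=0$, where $Z^0$ is defined directly by \eqref{36} rather than through the stochastic differential equation and \eqref{125}.
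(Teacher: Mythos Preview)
Your proof is correct and follows the same outline as the paper's: verify the structural conditions (i)--(ii) from the definition of $l^{\pi^b}$ and $R^{\pi^b}$, then establish the integrability \eqref{90} using Assumption \ref{Ass201}. The paper's own proof is extremely terse --- for \eqref{90} it simply invokes Assumption \ref{Ass201} together with \cite[Lemma 3.2]{Nob2021} --- whereas you spell out a self-contained argument (the pathwise domination $R^{\pi^b}_t\le-\inf_{s\le t}(X^{(\alpha)}_s\wedge0)$ followed by a subadditive moment bound on the running supremum of the dual L\'evy process), which is precisely the kind of computation that cited lemma encapsulates.
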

\begin{proof}
From \eqref{38} and Remark \ref{Rem206}, $l^{\pi^b}$ and $R^{\pi^b}$ are $\bF$-adapted. 
In addition, $l^{\pi^b}$ is progressively measurable since $Z^b$ is right-continuous. 
Thus, conditions (i) and (ii) in Subsection \ref{SubSec301} hold. 
\eqref{90} holds by Assumption \ref{Ass201} and the same argument as that in the proof in \cite[Lemma 3.2]{Nob2021}. 
The proof is complete. 
\end{proof}
For simplicity, we write $v_b$ for $v_{\pi^b}$ for $b\geq 0$.

\section{Some properties of the function $v_{b^\ast}$} \label{bast}
In this section, we define the optimal barrier $b^\ast$ and compute the derivative of $v_{b^\ast}$. 
This section is necessary to prove the optimality of the refracted--reflected strategies in Section \ref{Sec04}.
\par
We define the value $b^\ast$ by 
\begin{align}
b^\ast {:=} \inf \cbra{ b>0 :  \beta \nu(b) < 1},  
\end{align}
where $\nu(b ) {:=} \bE_b\sbra{e^{-q\kappa^{b, -}_0 }}$ with $e^{-\infty}=0$. 
\begin{Rem}
The value $b^\ast$ is finite since $\lim_{b\uparrow \infty}\nu(b)=0$. 
\end{Rem}
\begin{Rem}\label{Rem401}
If $0$ is regular for $(-\infty , 0)$ for $Y^0$, then $\nu(0)=1$. In addition, it is easy to check that $\nu$ is right-continuous at $0$. 
Thus, $b^\ast >0$. 
\end{Rem}
{
\begin{Rem}\label{Rem403}
Let we define 
\begin{align}
\hat{b}^{\ast}:= \inf \cbra{ b>0 :  \beta \nu(b) \leq  1}. 
\end{align}
Thereafter we will show that the refraction--reflection strategy at $b^\ast$ is an optimal strategy, but we can prove the optimality of  the refraction--reflection strategy at $b\in[\hat{b}^\ast, b^\ast]$ by the same argument.
\end{Rem}
}
\par
It is easy to check that 
$\nu$ is right-continuous and may not be left-continuous. 
In this case, $\beta \nu(b^\ast)$ may be less than $1$. 
Only when $\beta \nu(b^\ast)<1$, we may need to define the value $p^\ast \in [0, 1)$ as follows. 
If only the case $\beta \nu(b^\ast)=1$ is required, then one can go to Lemma \ref{Lem302} directly. 
\par
We define 
\begin{align}
T^{b^\ast, -}_0{:=}\inf\{t {\geq} 0: Y^{b^\ast}_t\leq 0\}, 
\end{align}
and for $p\in[0, 1]$, we define the independent random variable $V_p$ as 
\begin{align}
V_p {:=} 
\left\{     
\begin{array}{l}
0 \quad \text{with probability } 1-p,\\
1 \quad \text{with probability } p. 
\end{array}
  \right.
\end{align}
In addition, we define 
\begin{align}
K^p_0 {:=} \kappa^{b^\ast, -}_0 1_{\{V_p=1\}}+ T^{b^\ast,-}_0 1_{\{V_p =0\}}. 
\end{align}
Note that $\kappa^{b^\ast,-}_0= K^1_0$, and if $b^\ast >0$, then we have $\beta\bE_{b^\ast} \sbra{e^{-q K^{0}_0}}=\beta\bE_{b^\ast} \sbra{e^{-q  T^{b^\ast,-}_0}}=\lim_{b\uparrow b^\ast}\beta \nu (b)$ and thus
\begin{align}
\beta\bE_{b^\ast} \sbra{e^{-q K^{0}_0}}\geq 1. \label{a004}
\end{align} 
In addition, if $b^\ast=0$
, then $T^{b^\ast,-}_0=0$ $\bP_0$-a.s. and thus we have \eqref{a004}. 
Therefore, we have the following lemma.
\begin{Lem}\label{Lem301}
We assume that $\beta\nu(b^\ast)< 1$ and $X$ satisfies one of the following. 
\begin{enumerate}
\item $b^\ast>0$; 
\item $b^\ast =0$ and $0$ is regular for itself for $Y^0$. 
\end{enumerate}
Then, there exists $p^\ast\in[0, 1)$ satisfying 
\begin{align}
\beta\bE_{b^\ast} \sbra{e^{-q K^{p^\ast}_0}}=1. \label{a05}
\end{align}
\end{Lem}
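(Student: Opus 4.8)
The plan is to exploit the fact that, thanks to the independence built into the auxiliary variable $V_p$, the map $p\mapsto \beta\bE_{b^\ast}[e^{-qK^p_0}]$ is an affine function of $p$ on $[0,1]$, so that \eqref{a05} can be solved explicitly (equivalently, by the intermediate value theorem). Write
\[
a := \beta\bE_{b^\ast}\sbra{e^{-qT^{b^\ast,-}_0}}, \qquad c := \beta\nu(b^\ast)=\beta\bE_{b^\ast}\sbra{e^{-q\kappa^{b^\ast,-}_0}};
\]
both quantities lie in $[0,\beta]$ since the integrands take values in $[0,1]$.

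First I would establish the affine identity. On the event $\{V_p=1\}$ one has $K^p_0=\kappa^{b^\ast,-}_0$ and on $\{V_p=0\}$ one has $K^p_0=T^{b^\ast,-}_0$, hence $e^{-qK^p_0}=e^{-q\kappa^{b^\ast,-}_0}1_{\{V_p=1\}}+e^{-qT^{b^\ast,-}_0}1_{\{V_p=0\}}$. Taking $\bE_{b^\ast}$ and using that $V_p$ is independent of $X$ (so of $\kappa^{b^\ast,-}_0$ and of $T^{b^\ast,-}_0$) together with $\bP(V_p=1)=p$ yields
\[
\beta\bE_{b^\ast}\sbra{e^{-qK^p_0}} = p\,c+(1-p)\,a, \qquad p\in[0,1].
\]
Then I would invoke the two facts recorded just before the statement: $c=\beta\nu(b^\ast)<1$ by hypothesis, and $a\geq 1$ by \eqref{a004}; it is exactly here that assumptions (i) and (ii) enter, through the two displays preceding \eqref{a004}. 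Consequently $a-c\geq 1-c>0$ and $0\le a-1<a-c$, so
\[
p^\ast:=\frac{a-1}{a-c}\in[0,1)
\]
is well defined and satisfies $p^\ast c+(1-p^\ast)a=1$, i.e.\ $\beta\bE_{b^\ast}[e^{-qK^{p^\ast}_0}]=1$, which is \eqref{a05}.

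I do not expect any genuine obstacle here: once \eqref{a004} is in hand, the statement is an immediate consequence of the affine dependence on $p$. The only points meriting a word of care are the conditioning on $V_p$ in the first step, which relies on the independence that is part of the very definition of $V_p$, and the finiteness $a\le\beta<\infty$, which ensures the quotient defining $p^\ast$ is legitimate and lands in $[0,1)$ rather than being degenerate.
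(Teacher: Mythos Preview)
Your proof is correct and follows essentially the same approach as the paper: both establish the affine identity $\beta\bE_{b^\ast}[e^{-qK^p_0}]=pc+(1-p)a$ by conditioning on $V_p$, and then use $c<1$ together with \eqref{a004} to conclude. The only difference is cosmetic---you write down the explicit solution $p^\ast=(a-1)/(a-c)$, whereas the paper simply observes that the endpoint values straddle $1$.
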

\begin{Rem}
Under assumption (ii) of Lemma \ref{Lem301}, 
$X$ belongs to Case $2$. 
In fact, if $X$ has bounded variation paths with $\delta > \alpha$, then $0$ is irregular for $(-\infty , 0]$ for $Y^0$, which contradicts the assumptions, and if $X$ has bounded variation paths with $\delta < 0$ or has unbounded variation paths, then $0$ is regular for $(-\infty , 0)$ {for $Y^0$}, which contradicts Remark \ref{Rem401}. 
\end{Rem}
\begin{proof}[Proof {of Lemma \ref{Lem301}}]
We have 
\begin{align}
\beta\bE_{b^\ast} \sbra{e^{-q K^{p}_0}}
= &\beta\bE_{b^\ast} \sbra{e^{-q\kappa^{b^\ast,-}_0}; V_p =1}
+\beta\bE_{b^\ast} \sbra{e^{-qT^{b^\ast,-}_0}; V_p =0} \\
=& 
p\beta\bE_{b^\ast} \sbra{e^{-q\kappa^{b^\ast,-}_0}}
+(1-p)\beta\bE_{b^\ast} \sbra{e^{-qT^{b^\ast,-}_0}}.
\label{a03}
\end{align}
By $\beta \nu (b^\ast)<1$, \eqref{a004}, and \eqref{a03}, the proof is complete. 
\end{proof}
When $X$ satisfies $\beta\nu(b^\ast)=1$ or $b^\ast =0$ and $0$ is irregular for itself for $Y^0$, we assume that $p^\ast = 1$. {Thus \eqref{a05} holds when $X$ satisfies condition (i) or (ii) in Lemma \ref{Lem301}.}
\begin{Lem} \label{Lem302}
We assume that $X$ satisfies condition (i) or (ii) in Lemma \ref{Lem301}. 
The function $v_{b^\ast}$ is concave, and the function 
\begin{align}
\un{\nu}(x){:=} \beta\bE_x\sbra{e^{-qK^{ p^\ast}_0}   }, \quad x\in\bR, \label{a015}
\end{align}
is its density with respect to the Lebesgue measure. 
\end{Lem}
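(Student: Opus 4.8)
The plan is to establish the two assertions—concavity of $v_{b^\ast}$ and the identification of its density as $\un{\nu}$—by first computing the density directly through a pathwise comparison argument, and then deducing concavity from monotonicity of $\un{\nu}$. The key observation is that $\un{\nu}(x) = \beta\bE_x[e^{-qK^{p^\ast}_0}]$ is the expected discounted value, started from $x$, of the first passage of the refracted process $Y^{b^\ast}$ below $0$ (randomized by $V_{p^\ast}$ between the strict passage time $\kappa^{b^\ast,-}_0$ and the time $T^{b^\ast,-}_0$ when it reaches $0$). I would fix $x < y$ and couple two copies of the refracted--reflected L\'evy process $Z^{b^\ast}$ started at $x$ and at $y$ respectively, driven by the same underlying L\'evy path $X$. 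Then I would write $v_{b^\ast}(y) - v_{b^\ast}(x)$ as the expected NPV of the difference in dividends minus $\beta$ times the difference in capital injections between the two coupled processes, and show this equals $\int_x^y \un{\nu}(z)\,dz$.

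The main work—this is the hard part—is to analyze the difference process $Z^{b^\ast,y}_t - Z^{b^\ast,x}_t$ along the coupled paths. I would invoke the pathwise comparison results from Appendix \ref{two_paths} (the ``behavior of two paths of refracted--reflected L\'evy processes with slightly offset starting positions''): the two processes stay ordered, $Z^{b^\ast,x}_t \le Z^{b^\ast,y}_t$, and the gap $y-x$ is only ``consumed'' in three ways—(a) by the reflection at $0$ (when the lower process is reflected but the upper one is not, the gap shrinks); (b) by one process being above the refraction level $b^\ast$ while the other is below (the upper process loses drift $\alpha$ relative to the lower one, shrinking the gap at rate $\alpha$); and (c), crucially in Case $2$, by the subtle behavior at the level $b^\ast$ itself where the process may stagnate, which is exactly why the randomization parameter $p^\ast$ and the time $K^{p^\ast}_0$ enter. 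The infinitesimal rate at which a slab $[z, z+dz]$ of the initial gap is extinguished, discounted back to time $0$, should be precisely $\un{\nu}(z) = \beta\bE_z[e^{-qK^{p^\ast}_0}]$: the gap slab at level $z$ survives until the upper-minus-lower comparison, reduced to a one-dimensional ``virtual'' refracted process started at $z$, first hits $0$, and at that moment the difference is realized as a unit of capital injection avoided (hence the factor $\beta$), with the randomization by $V_{p^\ast}$ accounting for whether the hit at $0$ is strict or not. Making this heuristic rigorous requires carefully decomposing $dL^{\pi^{b^\ast}}$ and $dR^{\pi^{b^\ast}}$ contributions and applying Fubini, together with the strong Markov property from Remark \ref{Rem206} and the continuity-of-hitting-times lemma from Appendix \ref{Sec0D}.

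Once the identity $v_{b^\ast}(y) - v_{b^\ast}(x) = \int_x^y \un{\nu}(z)\,dz$ is established for all $x < y$, it follows immediately that $v_{b^\ast}$ is absolutely continuous with density $\un{\nu}$. For concavity it then suffices to show $\un{\nu}$ is non-increasing in $x$. This I would get from the comparison property again: for $x < y$, the coupled virtual refracted processes started at $x$ and $y$ stay ordered, and by monotonicity of the map $z \mapsto K^{p^\ast}_0$ under the coupling (a lower starting point hits $0$ no later than a higher one, in both the strict and the non-strict sense, and $V_{p^\ast}$ is shared), we get $K^{p^\ast,x}_0 \le K^{p^\ast,y}_0$ pathwise, hence $\bE_x[e^{-qK^{p^\ast}_0}] \ge \bE_y[e^{-qK^{p^\ast}_0}]$, i.e. $\un{\nu}(x) \ge \un{\nu}(y)$. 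This gives concavity of $v_{b^\ast}$ and completes the proof. I expect the delicate points to be the Case $2$ bookkeeping at the refraction level (where the new refracted L\'evy processes of this paper, and the stagnation-time analysis of Appendix \ref{Sec0A}, are needed) and the justification of the interchange of integration and expectation, both of which rely on the dominated-convergence-type estimates furnished by $\beta\nu(\cdot) \le \beta$ and the admissibility bound \eqref{90}.
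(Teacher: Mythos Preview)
Your overall architecture---coupling two refracted--reflected processes started at $x$ and $x+\varepsilon$, using the pathwise comparison of Appendix~\ref{two_paths}, and reading off the density from the difference $v_{b^\ast}(x+\varepsilon)-v_{b^\ast}(x)$---matches the paper. But your ``slab'' heuristic has a genuine gap: you account for the portion of the initial gap absorbed by reflection at $0$ (contributing $\beta e^{-qt}$ per unit) while not explaining why the portion absorbed by \emph{refraction} at $b^\ast$ (which contributes only $1\cdot e^{-qt}$ per unit, via $dL^{\pi^{b^\ast}}$) yields the same density $\un{\nu}$. The two modes of consumption produce different instantaneous payoffs, and the reconciliation hinges precisely on the calibration $\beta\un{\nu}(b^\ast)=1$ from \eqref{a05}, which your proposal never invokes.

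The paper does not attempt to make the slab picture rigorous. Instead it proves the two-sided bound
\[
\varepsilon\,\beta\un{\nu}(x+\varepsilon)\;\le\;v_{b^\ast}(x+\varepsilon)-v_{b^\ast}(x)\;\le\;\varepsilon\,\beta\un{\nu}(x)
\]
by introducing an auxiliary time-step $\eta>0$, decomposing $[0,\infty)$ into excursions where the two coupled processes straddle $b^\ast$, and showing via the strong Markov property that the refraction contributions collapse to terms of the form $e^{-qT}\varepsilon^{(n)}\bigl(1-\beta\un{\nu}(Z_T)\bigr)$. The sign of $1-\beta\un{\nu}(Z_T)$ flips depending on whether one evaluates at the upper or lower process (since $\un{\nu}$ is non-increasing and $\beta\un{\nu}(b^\ast)=1$), which after Fatou as $\eta\downarrow 0$ gives the sandwich. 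Concavity then follows from mid-point concavity, and the density identification from the sandwich plus monotonicity of $\un{\nu}$ via \cite[Propositions~3.1--3.2 in Appendix]{RevYor1999}. Your route would need to supply exactly this cancellation argument to close; without it, the direct equality $v_{b^\ast}(y)-v_{b^\ast}(x)=\int_x^y\un{\nu}(z)\,dz$ is asserted rather than proved.
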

\begin{Rem}
Note that we do not assume that $\beta\n{\nu}(b^\ast)<1$ in Lemma \ref{Lem302} {unlike Lemma \ref{Lem301}}. 
\end{Rem}
\begin{proof}[Proof of Lemma \ref{Lem302}]
{
For $\varepsilon\in\bR$
, we write $X^{[\varepsilon]}_t {:=} X_t+\varepsilon$ and $X^{[\varepsilon], (\alpha)}_t {:=} X_t+\varepsilon- \alpha t$. 
In addition, we write $L^{[\varepsilon], b}{:=}\{L^{[\varepsilon], b}_t: t\geq 0\}$ and $R^{[\varepsilon], b}{:=}\{R^{[\varepsilon], b}_t: t\geq 0\}$ for the cumulative amounts of dividends and capital injections, respectively, when we impose the refraction--reflection strategy at $b$ on $X^{[\varepsilon]}$ and write $Z^{[\varepsilon], b}{:=}\{Z^{[\varepsilon], b}_t:= X^{[\varepsilon]}_t - L^{[\varepsilon], b}_t+ R^{[\varepsilon], b}_t:t\geq 0\}$. 
Note that $Z^{[\varepsilon], b}$ behaves as a refracted--reflected L\'evy process. 
We write ``$K^{[\varepsilon], p^\ast}_0$'' and ``$T^{[\varepsilon], b^\ast}_0$'' for ``$K^{p^\ast}_0$'' and ``$T^{b^\ast,-}_0$'' of $X^{[\varepsilon]}$. 
}
\par
For $x, \varepsilon \in\bR$, we have 
\begin{align}
\begin{aligned}
v_{b^\ast}(x+\varepsilon) -& v_{b^\ast}(x)\\
=\bE_x &\sbra{\int_0^\infty e^{-qt} d(L^{[\varepsilon], b^\ast}_{t}-L^{[0], b^\ast}_{t })}
+\beta\bE_x \sbra{\int_0^\infty e^{-qt} d(R^{[0], b^\ast}_{t}-R^{[\varepsilon], b^\ast}_{t })}. 
\end{aligned}
\label{a006}
\end{align}
Note that for sufficiently small $\varepsilon>0$, by 
Lemmas \ref{Lem_paths} and \ref{Lem_paths_2}, the Stieltjes integrals using processes 
$\{L^{[\varepsilon], b^\ast}_{t}-L^{[0], b^\ast}_{t } : t\geq 0\}$ and $\{R^{[0], b^\ast}_{t}-R^{[\varepsilon], b^\ast}_{t }:t\geq 0\}$, which are monotone processes, are well-defined. 
\par
(i) We want to give bounds of \eqref{a006} to compute the right derivative of $v_{b^\ast}$. 
We fix $\varepsilon>0$ and $\eta>0$. We inductively define stopping times ${\{S^{(n)}_{[\eta]}\}}_{n\in\bN}$, ${\{T^{(n)}_{[\eta]}\}}_{n\in\bN}$ and values ${\{\varepsilon^{(n)}_{[\eta]}\}}_{n\in\bN}$ as 
\begin{align}
S^{(n)}_{[\eta]}{:=} &
\inf \cbra{t > T^{(n-1)}_{[\eta]}: Z^{[0], b^\ast}_{t}\leq b^\ast, Z^{[\varepsilon], b^\ast}_{t}\geq b^\ast}, 
\quad
T^{(n)}_{[\eta]}{:=} 
S^{(n)}_{[\eta]}+\eta,  
\end{align}
\begin{align}
\varepsilon^{(n)}_{[\eta]}{:=}&
\rbra{L^{[\varepsilon], b^\ast}_{T^{(n)}_{[\eta]}}-L^{[0], b^\ast}_{T^{(n)}_{[\eta]}}}-\rbra{L^{[\varepsilon], b^\ast}_{S^{(n)}_{[\eta]}}-L^{[0], b^\ast}_{S^{(n)}_{[\eta]}}},
\end{align} 
where $T^{(0)}_{[\eta]}{:=}0$. 
From Lemmas \ref{Lem_paths} {(3)} and \ref{Lem_paths_2}, we have 
\begin{align}
\varepsilon^{(n)}_{[\eta]}\geq 0, \quad n\in\bN,\label{Rev002}
\end{align} 
and {$L^{[\varepsilon], b^\ast}_{t}-L^{[0], b^\ast}_{t }$ may increases only on $\cup_{n\in\bN} [S^{(n)}_\eta, T^{(n)}_\eta]$, thus we have} 
\begin{align}
\lim_{t\to\infty } (L^{[\varepsilon], b^\ast}_{t}-L^{[0], b^\ast}_{t }) =\sum_{n\in{\bN}}\varepsilon^{(n)}_{[\eta]}. \label{Rev003}
\end{align}
{Note that, by Lemmas \ref{Lem_paths} (2), (4) and \ref{Lem_paths_2}, the values $Z^{[\varepsilon],b^\ast}_t-Z^{[0],b^\ast}_t$ and $-(R^{[\varepsilon],b^\ast}_t-R^{[0],b^\ast}_t)$ are non-negative, thus by Lemmas \ref{Lem_paths} (1) and \ref{Lem_paths_2}, we have $\sum_{n\in{\bN}}\varepsilon^{(n)}_{[\eta]}\leq \varepsilon$.} 
In addition, since {$Z^{[\varepsilon], b^\ast}_{T^{[\varepsilon], b^\ast}_0}=0$ and $Z^{[0], b^\ast}$ takes non-negative values and by Lemmas \ref{Lem_paths} (2) and \ref{Lem_paths_2}, we have
\begin{align}
Z^{[\varepsilon], b^\ast}_{t}-Z^{[0], b^\ast}_{t}=0, \qquad t\geq T^{[\varepsilon], b^\ast}_0. \label{Res001}
\end{align}
} 
{From Lemma \ref{Lem_paths} (3), (4) and \ref{Lem_paths_2}, the processes $\{L^{[\varepsilon], b^\ast}_{t}-L^{[0], b^\ast}_{t }: t\geq 0\}$ and $\{-(R^{[\varepsilon], b^\ast}_{t}-R^{[0], b^\ast}_{t }):t\geq 0\}$ are non-decreasing, thus by \eqref{Res001} and Lemmas \ref{Lem_paths} (1) and \ref{Lem_paths_2},} 
\begin{align}
 L^{[\varepsilon], b^\ast}_{t}-L^{[0], b^\ast}_{t }{\text{ and }- (R^{[\varepsilon], b^\ast}_{t}-R^{[0], b^\ast}_{t })}\text{ {do} not increase after time }{T^{[\varepsilon], b^\ast}_0}. \label{Rev004}
 \end{align} 
 {From \eqref{Rev002}, \eqref{Rev003} and \eqref{Rev004}
 we have}
\begin{align}
\bE_x \sbra{ \sum_{n\in\bN} e^{-q \rbra{T^{(n)}_{[\eta]} \land {T^{[\varepsilon], b^\ast}_0}}}\varepsilon^{(n)}_{[\eta]} }
\leq &\bE_x \sbra{\int_0^\infty e^{-qt} d(L^{[\varepsilon], b^\ast}_{t}-L^{[0], b^\ast}_{t })}\\
&\quad \leq\bE_x \sbra{ \sum_{n\in\bN} e^{-q S^{(n)}_{[\eta]}}\varepsilon^{(n)}_{[\eta]} }. 
\label{a009}
\end{align}
Since $R^{\pi^b}$ satisfies \eqref{38}, 
$R^{[0], b^\ast}_t-R^{[\varepsilon], b^\ast}_t$ may start to increase at time $K^{[0], p^\ast}_0$. 
In addition, it does not increase after time $K^{[\varepsilon], p^\ast}_0{\geq  T^{[\varepsilon], b^\ast}_0}$ {by \eqref{Rev004}}. 
Furthermore, we have $\lim_{t\to\infty } (R^{[0], b^\ast}_{t}-R^{[\varepsilon], b^\ast}_{t }) \leq\varepsilon-\sum_{n\in\bN}\varepsilon^{(n)}_{[\eta]} $, which holds with ``$=$'' on $\{ K^{[\varepsilon], p^\ast}_0< \infty\}$, thus we have
\begin{align}
\bE_x\sbra{e^{-q K^{[\varepsilon], p^\ast}_0}  \rbra{\varepsilon - \sum_{n\in\bN} \varepsilon^{(n)}_{[\eta]}   } }
\leq &\bE_x \sbra{\int_0^\infty e^{-qt} d(R^{[0], b^\ast}_{t}-R^{[\varepsilon], b^\ast}_{t })}\\
&\quad \leq\bE_x\sbra{e^{-qK^{[0], p^\ast}_0}  \rbra{\varepsilon - \sum_{n\in\bN} \varepsilon^{(n)}_{[\eta]}   } }
.
\label{a007}
\end{align}
Using the strong Markov property at $T^{(n)}_{[\eta]}\land {T^{[\varepsilon], b^\ast}_0}$, and since $L^{[\varepsilon], b^\ast}_{t}-L^{[0], b^\ast}_{t }$ does not increase after time {$T^{[\varepsilon], b^\ast}_0$}
, we have 
\begin{align}
\bE_x\sbra{e^{-q K^{[\varepsilon], p^\ast}_0} \varepsilon^{(n)}_{[\eta]}    }
&=\bE_x\sbra{e^{-q \rbra{T^{(n)}_{[\eta] } \land {T^{[\varepsilon], b^\ast}_0}}} \varepsilon^{(n)}_{[\eta]} \un{\nu}\rbra{Z^{[\varepsilon]}_{T^{(n)}_{[\eta]} \land {T^{[\varepsilon], b^\ast}_0}}}  } . \label{a012}
\end{align}
Using the strong Markov property at $T^{(n)}_{[\eta]}$, 
we have
\begin{align}
\bE_x\sbra{e^{-q K^{[0], p^\ast}_0} \varepsilon^{(n)}_{[\eta]}    }
&=\bE_x\sbra{e^{-q T^{(n)}_{[\eta]} } \varepsilon^{(n)}_{[\eta]} \un{\nu}\rbra{Z^{[0]}_{T^{(n)}_{[\eta]}}}  ;T^{(n)}_{[\eta]}<  K^{[0], p^\ast}_0 }\\
& \qquad +
\bE_x\sbra{e^{-q K^{[0], p^\ast}_0} \varepsilon^{(n)}_{[\eta]}    ;  K^{[0], p^\ast}_0<T^{(n)}_{[\eta]}}
\\
&\geq
\bE_x\sbra{e^{-q T^{(n)}_{[\eta]} } \varepsilon^{(n)}_{[\eta]} \un{\nu}\rbra{Z^{[0]}_{T^{(n)}_{[\eta]}}}   }.\label{a008}
\end{align}
Using \eqref{a012} and \eqref{a008}, we have 
\begin{align}
\begin{aligned}
\bE_x \sbra{  e^{-q \rbra{T^{(n)}_{[\eta]} \land {T^{[\varepsilon], b^\ast}_0}}}\varepsilon^{(n)}_{[\eta]} } &-
\beta\bE_x\sbra{e^{-q K^{[\varepsilon], p^\ast}_0}   \varepsilon^{(n)}_{[\eta]}    }\\
=  
 &\bE_x\sbra{e^{-q \rbra{T^{(n)}_{[\eta]}\land {T^{[\varepsilon], b^\ast}_0}}} \varepsilon^{(n)}_{[\eta]} \rbra{1-\beta\un{\nu}\rbra{Z^{[\varepsilon]}_{T^{(n)}_{[\eta]} \land {T^{[\varepsilon], b^\ast}_0}}} }}
\end{aligned}
\label{a010}
\end{align}
and 
\begin{align}
&\bE_x \sbra{  e^{-q S^{(n)}_{[\eta]}}\varepsilon^{(n)}_{[\eta]} } -
\beta\bE_x\sbra{e^{-q K^{[0], p^\ast}_0}   \varepsilon^{(n)}_{[\eta]}    }\\
&\leq
\bE_x\sbra{\rbra{e^{-q S^{(n)}_{[\eta]}} -e^{-qT^{(n)}_{[\eta]}}}\varepsilon^{(n)}_{[\eta]} } +
\bE_x\sbra{e^{-q T^{(n)}_{[\eta]} } \varepsilon^{(n)}_{[\eta]} \rbra{1-\beta\un{\nu}\rbra{Z^{[0]}_{T^{(n)}_{[\eta]}}} }}
\\
&\leq
\rbra{1-e^{-q \eta}}\bE_x\sbra{\varepsilon^{(n)}_{[\eta]} }
+
\bE_x\sbra{e^{-q  T^{(n)}_{[\eta]} } \varepsilon^{(n)}_{[\eta]} \rbra{1-\beta\un{\nu}\rbra{Z^{[0]}_{T^{(n)}_{[\eta]} }} }}
.
\label{a011}
\end{align}
From \eqref{a006}, \eqref{a009}, \eqref{a007}, \eqref{a010}, and \eqref{a011}, 
we have, for $\eta>0$, 
\begin{align}
\begin{aligned}
\varepsilon&\beta\un{\nu}\rbra{x+\varepsilon}+
\sum_{n\in\bN}\bE_x\sbra{e^{-q \rbra{ T^{(n)}_{[\eta]} \land {T^{[\varepsilon], b^\ast}_0}}} \varepsilon^{(n)}_{[\eta]} \rbra{1-\beta\un{\nu}\rbra{Z^{[\varepsilon]}_{T^{(n)}_{[\eta]}\land {T^{[\varepsilon], b^\ast}_0}}} }}\\
&\leq {{v_{b^\ast}(x+\varepsilon) -v_{b^\ast}(x)}}
\leq \varepsilon\beta \un{\nu}(x)
+\rbra{1-e^{-q \eta}}\sum_{n\in\bN}\bE_x\sbra{\varepsilon^{(n)}_{[\eta]} }\\
&\qquad+\sum_{n\in\bN}\bE_x\sbra{e^{-q T^{(n)}_{[\eta]}} \varepsilon^{(n)}_{[\eta]} \rbra{1-\beta\un{\nu}\rbra{Z^{[0]}_{T^{(n)}_{[\eta]} }}}}. 
\end{aligned}
\label{a016}
\end{align}
\par
(ii) We 
prove that for $\varepsilon >0$ and $x\in\bR$, 
\begin{align}
\varepsilon\beta\un{\nu}\rbra{x+\varepsilon}  \leq {{v_{b^\ast}(x+\varepsilon) -v_{b^\ast}(x)}}
.
\label{a013}
\end{align}
By Fubini's theorem, we have
\begin{align}
&\sum_{n\in\bN}\bE_x\sbra{e^{-q \rbra{ T^{(n)}_{[\eta]} \land {T^{[\varepsilon], b^\ast}_0}}} \varepsilon^{(n)}_{[\eta]} \rbra{1-\beta\un{\nu}\rbra{Z^{[\varepsilon]}_{T^{(n)}_{[\eta]} \land {T^{[\varepsilon], b^\ast}_0}}} }}\\
&
\begin{aligned}
& = 
\bE_x\Bigg{[}\int_0^\infty
\sum_{n\in\bN}\Bigg{(} e^{-q \rbra{ T^{(n)}_{[\eta]}\land {T^{[\varepsilon], b^\ast}_0} }} 1_{[S^{(n)}_{[\eta]} \land {T^{[\varepsilon], b^\ast}_0}, T^{(n)}_{[\eta]}\land {T^{[\varepsilon], b^\ast}_0})}(t)  \\
&\qquad \times\frac{\varepsilon^{(n)}_{[\eta]}}{T^{(n)}_{[\eta]}\land {T^{[\varepsilon], b^\ast}_0}-S^{(n)}_{[\eta]} \land {T^{[\varepsilon], b^\ast}_0}} 
\rbra{1-\beta\un{\nu}\rbra{Z^{[\varepsilon]}_{T^{(n)}_{[\eta]}\land {T^{[\varepsilon], b^\ast}_0} }} }\Bigg{)}dt\Bigg{]}, 
\end{aligned}
\label{a018}
\end{align} 
where $\frac{0}{0}=0$. 
{In fact, from} the definitions of $S^{(n)}_{[\eta]}
$, $T^{(n)}_{[\eta]}$, {$T^{[\varepsilon], b^\ast}_0$}, and $\varepsilon^{(n)}_{[\eta]}$, we have
\begin{align}
\frac{\varepsilon^{(n)}_{[\eta]}}{T^{(n)}_{[\eta]}\land {T^{[\varepsilon], b^\ast}_0}-S^{(n)}_{[\eta]} \land {T^{[\varepsilon], b^\ast}_0}}\leq \alpha, \qquad n\in\bN, ~\eta>0,\label{a029}
\end{align}
and thus, for $t\geq 0$, 
\begin{align}
\begin{aligned}
&\sum_{n\in\bN} {\Bigg{|}}e^{-q \rbra{T^{(n)}_{[\eta]}\land {T^{[\varepsilon], b^\ast}_0}} } 1_{[S^{(n)}_{[\eta]} \land {T^{[\varepsilon], b^\ast}_0}, T^{(n)}_{[\eta]}\land {T^{[\varepsilon], b^\ast}_0})}(t)  \\
&
\times\frac{\varepsilon^{(n)}_{[\eta]}}{T^{(n)}_{[\eta]}\land {T^{[\varepsilon], b^\ast}_0}-S^{(n)}_{[\eta]} \land {T^{[\varepsilon], b^\ast}_0}}
\rbra{1-\beta\un{\nu}\rbra{Z^{[\varepsilon]}_{T^{(n)}_{[\eta]} \land {T^{[\varepsilon], b^\ast}_0}}} }\Bigg{|}
\leq e^{-qt}\alpha (1+\beta). 
\end{aligned}
\label{a017}
\end{align}
{By \eqref{a017}, the term in absolute value of the left-hand side of \eqref{a017} is integrable for $\sum_{n\in\bN}\bE_x\sbra{\int_0^\infty (\cdot) dt}$, and we could use Fubini's theorem at \eqref{a018}. 
}
{Since the right-hand side of \eqref{a017} is integrable with respect to the measure $\bE_x \sbra{\int_0^\infty (\cdot)dt}$,} we can apply Fatou's lemma to \eqref{a018} regardless of the plus or minus of the term in the integral, and we have 
\begin{align}
&\liminf_{\eta\downarrow0}\sum_{n\in\bN}\bE_x\sbra{e^{-q \rbra{T^{(n)}_{[\eta]} \land {T^{[\varepsilon], b^\ast}_0}}} \varepsilon^{(n)}_{[\eta]} \rbra{1-\beta\un{\nu}\rbra{Z^{[\varepsilon]}_{T^{(n)}_{[\eta] }\land{T^{[\varepsilon], b^\ast}_0}}} }}\\
&
\begin{aligned}
&\geq 
\bE_x\Bigg{[}\int_0^\infty\liminf_{\eta\downarrow0}
\Bigg{(}\sum_{n\in\bN} e^{-q \rbra{T^{(n)}_{[\eta]} \land {T^{[\varepsilon], b^\ast}_0}}} 1_{[S^{(n)}_{[\eta]} \land {T^{[\varepsilon], b^\ast}_0}, T^{(n)}_{[\eta]}\land {T^{[\varepsilon], b^\ast}_0})}(t)  \\
&\qquad \times\frac{\varepsilon^{(n)}_{[\eta]}}{T^{(n)}_{[\eta]}\land {T^{[\varepsilon], b^\ast}_0}-S^{(n)}_{[\eta]} \land{T^{[\varepsilon], b^\ast}_0}}
\rbra{1-\beta\un{\nu}\rbra{Z^{[\varepsilon]}_{T^{(n)}_{[\eta]} \land{T^{[\varepsilon], b^\ast}_0}}} }\Bigg{)}dt\Bigg{]}.
\end{aligned}
\label{a028}
\end{align}
We fix $\omega\in\Omega$ such that $t\mapsto X_t$ is c\`adl\`ag and the differential equation \eqref{4} has a unique solution (almost surely $\omega$ satisfies these conditions), and we prove that
\begin{align}
\begin{aligned}
&\liminf_{\eta\downarrow0}
\Bigg{(}\sum_{n\in\bN} e^{-q \rbra{T^{(n)}_{[\eta]} \land {T^{[\varepsilon], b^\ast}_0}}} 1_{[S^{(n)}_{[\eta]} \land {T^{[\varepsilon], b^\ast}_0}, T^{(n)}_{[\eta]} \land{T^{[\varepsilon], b^\ast}_0})}(t)  \\
&\qquad \times\frac{\varepsilon^{(n)}_{[\eta]}}{T^{(n)}_{[\eta]} \land {T^{[\varepsilon], b^\ast}_0}-S^{(n)}_{[\eta]} \land {T^{[\varepsilon], b^\ast}_0}}
\rbra{1-\beta\un{\nu}\rbra{Z^{[\varepsilon]}_{T^{(n)}_{[\eta]} \land {T^{[\varepsilon], b^\ast}_0}}} }\Bigg{)}\geq 0 
\end{aligned}
\label{a032}
\end{align}
almost everywhere (a.e.) with respect to $t$. 
Since for fixed $t\geq 0$ there is only one $n\in\bN$ where $ 1_{[S^{(n)}_{[\eta]} \land {T^{[\varepsilon], b^\ast}_0}, T^{(n)}_{[\eta]} \land {T^{[\varepsilon], b^\ast}_0})}(t)$ does not equal to $ 0 $ in the summation and by 
\begin{align}
e^{-q \rbra{T^{(n)}_{[\eta]} \land {T^{[\varepsilon], b^\ast}_0}}}\in[0,1], \quad
\frac{\varepsilon^{(n)}_{[\eta]}}{T^{(n)}_{[\eta]} \land {T^{[\varepsilon], b^\ast}_0}-S^{(n)}_{[\eta]} \land {T^{[\varepsilon], b^\ast}_0}}\in [0, \alpha] ,\qquad  \eta > 0, n\in\bN,
\end{align} 
it is sufficient for \eqref{a032} to prove that 
\begin{align}
\liminf_{\eta\downarrow0}
\rbra{\sum_{n\in\bN}  1_{[S^{(n)}_{[\eta]} \land {T^{[\varepsilon], b^\ast}_0}, T^{(n)}_{[\eta]} \land {T^{[\varepsilon], b^\ast}_0})}(t)  
\rbra{1-\beta\un{\nu}\rbra{Z^{[\varepsilon]}_{T^{(n)}_{[\eta]} \land {T^{[\varepsilon], b^\ast}_0}}} }}\geq 0 \label{a030}
\end{align}
a.e.\ with respect to $t$. 
We assume that $Z^{b^\ast}$ does not jump at $t$ ($Z^{b^\ast}$ does not have jumps a.e.\ with respect to $t$) and assume $b^{\ast}\not\in[Z^{[0],b^\ast}_t, Z^{[\varepsilon],b^\ast}_t ] $.
Then, for sufficiently small $\eta$, we have $b^{\ast}\not\in[Z^{[0],b^\ast}_s, Z^{[\varepsilon],b^\ast}_s ] $ for $s\in(t-\eta, t]$, and thus, by the definitions of $S^{(n)}_{[\eta]} \land {T^{[\varepsilon], b^\ast}_0}$ and $T^{(n)}_{[\eta]} \land {T^{[\varepsilon], b^\ast}_0}$, we have 
$t\not\in [S^{(n)}_{[\eta]} \land K^{[\varepsilon], p^\ast}_0, T^{(n)}_{[\eta]} \land K^{[\varepsilon], p^\ast}_0)$ for all $n\in\bN$. Therefore, we have \eqref{a030}. 
We assume that $b^{\ast}\in[Z^{[0],b^\ast}_t, Z^{[\varepsilon],b^\ast}_t ] $. 
Since the function $\un{\nu}$ is non-increasing and since $T^{(n)}_{[\eta]} \land {T^{[\varepsilon], b^\ast}_0}\in[t, t+\eta]$ when $t\in[S^{(n)}_{[\eta]} \land {T^{[\varepsilon], b^\ast}_0}, T^{(n)}_{[\eta]} \land {T^{[\varepsilon], b^\ast}_0})$, we have 
\begin{align}
\sum_{n\in\bN}  1_{[S^{(n)}_{[\eta]} \land {T^{[\varepsilon], b^\ast}_0}, T^{(n)}_{[\eta]} \land {T^{[\varepsilon], b^\ast}_0})}(t)  
\rbra{1-\beta\un{\nu}\rbra{Z^{[\varepsilon]}_{T^{(n)}_{[\eta]} \land {T^{[\varepsilon], b^\ast}_0}}} }
\geq 1-\beta \un{\nu}\rbra{\inf_{s\in[t, t+\eta]} Z^{[\varepsilon]}_s }.\label{Rev023}
\end{align}
We need to consider the following three cases. 
\begin{enumerate}
\item[(ii-a)] We assume that $Z^{[\varepsilon]}_t > b^\ast$. Then, $\inf_{s\in[t, t+\eta]} Z^{[\varepsilon]}_s>b^\ast$ for sufficiently small $\eta$, and thus 
\begin{align}
\liminf_{\eta\downarrow0}\rbra{1-\beta \un{\nu}\rbra{\inf_{s\in[t, t+\eta]} Z^{[\varepsilon]}_s }}
\geq 1-\beta \un{\nu}\rbra{b^\ast }=0, \label{Rev024}
\end{align}
where in the equality we used \eqref{a05}. 
\item[(ii-b)] We assume that $Z^{[\varepsilon]}_t = b^\ast$ and $0$ is regular for $\bR\backslash\{0\}$ for $Y^0$. 
Then, by Lemma \ref{LemC01} and \eqref{a05}, we have 
\begin{align}
\liminf_{\eta\downarrow0}\rbra{1-\beta \un{\nu}\rbra{\inf_{s\in[t, t+\eta]} Z^{[\varepsilon]}_s }}
= 1-\beta \un{\nu}\rbra{b^\ast }=0.\label{a031}
\end{align}
\item[(ii-c)] We assume that $Z^{[\varepsilon]}_t = b^\ast$ and is irregular for $\bR\backslash\{0\}$ for $Y^0$. 
Then, $Z^{[\varepsilon]}$ takes the value $b^\ast$ for a while after hitting $b^\ast$, and after that, $Z^{[\varepsilon]}$ jumps away from $b^\ast$. Thus $\inf_{s\in[t, t+\eta]} Z^{[\varepsilon]}_s=b^\ast$ for sufficiently small $\eta$. Therefore, we have \eqref{a031}. 
\end{enumerate}
From the arguments above, we have \eqref{a032} a.e.\ with respect to $t$. 
From (i), \eqref{a028}, and \eqref{a032}, we have \eqref{a013}. 
\par
(iii) We 
prove that for $\varepsilon >0$ and $x\in\bR$, 
\begin{align}
 {{v_{b^\ast}(x+\varepsilon) -v_{b^\ast}(x)}}
\leq \varepsilon\beta\un{\nu}(x). \label{a033}
\end{align}
From the definition of $\varepsilon^{(n)}_{[\eta]}$, we have 
\begin{align}
\lim_{\eta\downarrow0}\rbra{1-e^{-q \eta}}\sum_{n\in\bN}\bE_x\sbra{\varepsilon^{(n)}_{[\eta]} }
\leq\lim_{\eta\downarrow0}\rbra{1-e^{-q \eta}}\varepsilon=0. \label{a039}
\end{align}
{We replace ``$S^{(n)}_{[\eta]}\land T^{[\varepsilon], b^\ast}_0$'' by ``$S^{(n)}_{[\eta]}$'', ``$T^{(n)}_{[\eta]}\land T^{[\varepsilon], b^\ast}_0$'' by ``$T^{(n)}_{[\eta]}$'', ``$Z^{[\varepsilon]}$'' by ``$Z^{[0]}$''  and ``$\liminf$'' by ``$\limsup$'' from all the equations in (ii) except for \eqref{a013}. 
In addition, we replace ``$\geq $'' by ``$\leq$'' from \eqref{a028}, \eqref{a032}, \eqref{a030}, \eqref{Rev023} and \eqref{Rev024}. 
We can then confirm that these equations in (ii) except for \eqref{a013} are correct by the same arguments as that in (ii).} 
From (i) {and} \eqref{a039}, {and \eqref{a028} and \eqref{a032} with the symbols replaced as above}, we have \eqref{a033}. 
\par
(iv) From \eqref{a013} and \eqref{a033}, the function $v_{b^\ast}$ is right-continuous on $\bR$. 
Similarly, from \eqref{a013} and \eqref{a033} with $x$ changed to $x-\varepsilon$, the function $v_{b^\ast}$ is left-continuous on $\bR$. 
In addition, we have, for $\varepsilon >0$ and $x \in \bR$, 
\begin{align}
 {{v_{b^\ast}(x+\varepsilon) -v_{b^\ast}(x)}} \leq \varepsilon\beta\un{\nu}(x)\leq
  {{v_{b^\ast}(x) -v_{b^\ast}(x-\varepsilon)}}. \label{a014}
  \end{align}
By \eqref{a014}, the function $v_{b^\ast}$ is mid-point concave and thus concave on $\bR$. 
From \eqref{a013} and \eqref{a033}, 
we have 
 \begin{align}
 \beta\un{\nu}\rbra{x+}
\leq \lim_{\varepsilon\downarrow0}\frac {{v_{b^\ast}(x+\varepsilon) -v_{b^\ast}(x)}}{\varepsilon}
\leq \beta\un{\nu}\rbra{x}. \label{a040}
 \end{align}
Since the function $\un{\nu}$ is non-increasing, this function is continuous a.e.\ with respect to the Lebesgue measure 
and 
by \eqref{a040} and \cite[Proposition 3.1 in Appendix]{RevYor1999}, the right derivative and the left derivative of $v_{b^\ast}$ is 
equal to the function $\un{\nu}$ 
a.e.\ with respect to the Lebesgue measure. 
Furthermore, by \cite[Proposition 3.2 in Appendix]{RevYor1999}, the left-derivative of $v_{b^\ast}$ is the density of $v_{b^\ast}$, 
thus \eqref{a015} is the density of $v_{b^\ast}$. The proof is complete. 
\end{proof}
\begin{Lem} \label{Lem405}
We assume that $b^\ast=0$ and $0$ is irregular for itself for $Y^0$. 
The function $v_{b^\ast}$ is concave and the function $\un{\nu}$ is its density with respect to the Lebesgue measure. 
\end{Lem}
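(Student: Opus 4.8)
The plan is to run the proof of Lemma~\ref{Lem302} essentially verbatim, indicating only the points of departure. By the convention fixed just before Lemma~\ref{Lem302} we have $p^\ast=1$ here, so $K^{p^\ast}_0=\kappa^{b^\ast,-}_0=\kappa^{0,-}_0$ and $\un{\nu}(x)=\beta\bE_x\sbra{e^{-q\kappa^{0,-}_0}}$. Moreover $b^\ast=0$ provides a sequence $b_n\downarrow0$ with $\beta\nu(b_n)<1$, and the right-continuity of $\nu$ at $0$ (Remark~\ref{Rem401}) then yields $\un{\nu}(b^\ast)=\beta\nu(0)=\lim_n\beta\nu(b_n)\leq1$; this inequality will take the place of the identity \eqref{a05} used in the proof of Lemma~\ref{Lem302}. (When $\beta\nu(0)=1$ that proof applies word for word, so the substance of the present lemma is the case $\beta\nu(0)<1$.)

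First I would reproduce the coupling of the proof of Lemma~\ref{Lem302}: with the notation $X^{[\varepsilon]}$, $L^{[\varepsilon],b^\ast}$, $R^{[\varepsilon],b^\ast}$, $Z^{[\varepsilon],b^\ast}$ introduced there for the $\varepsilon$-shifted path under the refraction--reflection strategy at $b^\ast=0$, I would derive \eqref{a006} and introduce the stopping times $S^{(n)}_{[\eta]}$, $T^{(n)}_{[\eta]}$ and the increments $\varepsilon^{(n)}_{[\eta]}$. Since $b^\ast=0$ and $Z^{[\varepsilon],b^\ast}_t\geq0$ automatically, the defining event $\{Z^{[0],b^\ast}_t\leq b^\ast,\ Z^{[\varepsilon],b^\ast}_t\geq b^\ast\}$ becomes $\{Z^{[0],b^\ast}_t=0\}$, so $S^{(n)}_{[\eta]}$ is simply the $n$-th return of $Z^{[0],b^\ast}$ to the level $0$ after waiting $\eta$; the path comparisons (Lemmas~\ref{Lem_paths} and \ref{Lem_paths_2}) and the whole chain \eqref{a009}--\eqref{a016} carry over unchanged.

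Next I would carry out the $\eta\downarrow0$ passage as in parts~(ii)--(iv) of that proof. For the lower bound \eqref{a013} the case split (ii-a)--(ii-c) and the use of Lemma~\ref{LemC01} may be dropped: since $Z^{[\varepsilon],b^\ast}_s\geq0=b^\ast$ for every $s$ one has $\inf_{s\in[t,t+\eta]}Z^{[\varepsilon],b^\ast}_s\geq b^\ast$, hence $\liminf_{\eta\downarrow0}\bigl(1-\un{\nu}(\inf_{s\in[t,t+\eta]}Z^{[\varepsilon],b^\ast}_s)\bigr)\geq1-\un{\nu}(b^\ast)\geq0$, which is all that enters \eqref{a028} and \eqref{a032}. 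For the upper bound \eqref{a033}, at a.e.\ $t$ with $Z^{[0],b^\ast}_t>0$ the path $Z^{[0],b^\ast}$ has, for all small $\eta$, no return to $0$ on $(t-\eta,t]$, so $1_{[S^{(n)}_{[\eta]},T^{(n)}_{[\eta]})}(t)$ vanishes for every $n$ and the corresponding integrand is $0$; it then remains only to control the contribution of the set $\{t:Z^{[0],b^\ast}_t=0\}$. Granting this, part~(iv) proceeds as stated: $v_{b^\ast}$ is right- and left-continuous, mid-point concave and hence concave, one has $\un{\nu}(x+)\leq\lim_{\varepsilon\downarrow0}\varepsilon^{-1}\rbra{v_{b^\ast}(x+\varepsilon)-v_{b^\ast}(x)}\leq\un{\nu}(x)$ a.e., and $\un{\nu}$ is the density of $v_{b^\ast}$ by \cite[Propositions~3.1 and 3.2 in Appendix]{RevYor1999}.

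The main obstacle is this last estimate on $\{t:Z^{[0],b^\ast}_t=0\}$, because here $\un{\nu}(b^\ast)$ may be strictly below $1$ rather than equal to it, so (ii-a)--(ii-c) cannot be quoted directly. The plan is to exploit the hypothesis that $0$ is irregular for itself for $Y^0$: it forces the coupled paths, once they reach $0$, to leave $0$ only by a jump --- the situation of case~(ii-c) in the proof of Lemma~\ref{Lem302} --- and, by \eqref{Res001}--\eqref{Rev004}, to coincide after the time $T^{[\varepsilon],b^\ast}_0$. Applying the strong Markov property at $T^{(n)}_{[\eta]}\land T^{[\varepsilon],b^\ast}_0$ exactly as in \eqref{a012} and \eqref{a008}, the $\eta\downarrow0$ contribution of such $t$ equals, in the limit, the discounted excess dividend the upper path pays while $Z^{[0],b^\ast}$ sits at $0$ minus the discounted capital injection thereby rendered unnecessary, the common increment being weighted by the factor $1-\un{\nu}\bigl(Z^{[0],b^\ast}_{T^{(n)}_{[\eta]}\land T^{[\varepsilon],b^\ast}_0}\bigr)$ whose argument is $\geq b^\ast=0$; combining the monotonicity of $\un{\nu}$, the inequality $\un{\nu}(b^\ast)\leq1$, and the behaviour of $\kappa^{0,-}_0$ on $\{Z^{[0],b^\ast}_t=0\}$ is what I expect to make this term non-positive in the limit and hence give \eqref{a033}. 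I anticipate that the bulk of the work will be this bookkeeping of increments rather than any conceptual difficulty, since everything else is inherited from Lemma~\ref{Lem302}.
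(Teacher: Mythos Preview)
Your approach is substantially more complicated than the paper's, and the final paragraph contains a conceptual confusion that prevents it from closing.

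The paper's proof is essentially one line. From $b^\ast=0$ and Remark~\ref{Rem401} it deduces that $0$ is irregular for $(-\infty,0]$ for $Y^0$, and hence for $X^{(\alpha)}$ (once $Y^0$ leaves $0$ into $(0,\infty)$ it coincides with $X^{(\alpha)}$). This is precisely the hypothesis of Lemma~\ref{Lem_paths_3}, whose conclusion \eqref{87} is $L^{[l],0}_t=L^{[k],0}_t=\alpha t$ for all $t$. Thus the dividend difference vanishes identically, every $\varepsilon^{(n)}_{[\eta]}$ in your scheme is zero, and \eqref{a006} reduces to the pure capital-injection comparison, which is the setting of \cite[Lemma~7]{NobYam2022}. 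That immediately gives \eqref{a013} and \eqref{a033}, and part~(iv) finishes as you say.

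Two concrete issues with your route. First, you invoke Lemmas~\ref{Lem_paths} and \ref{Lem_paths_2} for the path comparison, but Lemma~\ref{Lem_paths} requires $b>0$ and Lemma~\ref{Lem_paths_2} requires Case~2; under the hypothesis of Lemma~\ref{Lem405} one can have Case~1 with $\delta>\alpha$, so neither applies in general. The correct reference is Lemma~\ref{Lem_paths_3}, and once you cite it you see the whole refraction bookkeeping collapses.

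Second, and more importantly, your reading of ``$0$ irregular for itself for $Y^0$'' is backwards. You write that this ``forces the coupled paths, once they reach $0$, to leave $0$ only by a jump --- the situation of case~(ii-c)''. Case~(ii-c) is the hypothesis that $0$ is irregular for $\bR\setminus\{0\}$, i.e.\ the process \emph{stays} at $0$; that is the opposite of irregular for $\{0\}$, which means the process leaves $0$ immediately. Under the present hypothesis $Z^{[0],0}$ leaves $0$ instantly into $(0,\infty)$, so $\{t:Z^{[0],0}_t=0\}$ is a countable set of Lebesgue measure zero and contributes nothing to the $dt$-integral. Your ``main obstacle'' therefore does not exist, but the reasoning you propose to overcome it is based on the wrong picture.
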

\begin{proof}
By Remark \ref{Rem401}, $0$ is irregular for $(-\infty , 0]$ for $Y^0$ and thus for $X^{(\alpha)}$. 
Therefore, by \eqref{87}, \eqref{38}, and the same argument as in the proof of \cite[Lemma {7}]{ NobYam2022}, we have \eqref{a013} and \eqref{a033} {for $x\in\bR$}.  
By the same argument as in (iv) of the proof of Lemma \ref{Lem302}, the proof is complete. 
\end{proof}
Hereinafter, we assume that the function $v_{b^\ast}^\prime$, which usually symbolizes for the derivative, represents the function $\un{\nu}$. 

\section{Verification} \label{Sec04}
In this section, we prove the optimality of the refraction--reflection strategy $\pi^{b^\ast}$ using the verification lemma and results in Section \ref{bast}. 
Before giving the main theorem, we give an assumption. 
\begin{Ass}\label{Ass402}
When $X$ has unbounded variation paths, for $b>0$, the function 
\begin{align}
\nu_{b} (x) {:=} \bE_x \sbra{e^{-q\kappa^{b,- }_0 }} , \quad x>0 
\end{align}
has a locally bounded density $\nu^\prime_b$ on $(0, \infty)$ with respect to the Lebesgue measure. 
The function $\nu^\prime_b$ is continuous a.e.\ on $(0, \infty)$ with respect to the Lebesgue measure. 
\end{Ass}
This assumption is necessary to prove that the function $v_{b^\ast}$ belongs to $C^{(2)}_{\text{line}}$, which is a class of functions that will appear in the proof of the main theorem, and to prove Lemma \ref{Lem505} when $X$ has unbounded variation paths. 
We give examples of the processes with unbounded variation paths satisfying Assumption \ref{Ass402} in Appendix \ref{Ape00D}.
\par
Under Assumptions \ref{Ass201}, \ref{Ass202a}, and \ref{Ass402}, we have the following main theorem. 
\begin{Thm}\label{Thm502}
The refracted--reflected strategy $\pi^{b^\ast}$ is an optimal strategy. 
\end{Thm}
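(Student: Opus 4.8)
The plan is to deduce Theorem \ref{Thm502} from a \emph{verification lemma}, which should state (in the spirit of the setup in Section \ref{OptimalDividend}) that if a function $w$ is concave, sufficiently smooth — belonging to the class $C^{(2)}_{\text{line}}$ alluded to after Assumption \ref{Ass402} — satisfies $w \geq 0$ has the right growth (bounded above by $\alpha/q$, cf.\ Remark \ref{Rem203}), dominates $\beta$ near $0$ in the appropriate sense so that capital injections are never penalised too much, and satisfies the variational (HJB) inequality
\begin{align}
\max\cbra{\cL w(x) - q w(x) + \alpha(1-w'(x)), \ \cL w(x) - qw(x)}\leq 0
\end{align}
for $x>0$ together with $w'(x)\leq\beta$ on the region where injection occurs, then $w \geq v_\pi$ for every admissible $\pi\in\cA$; and if moreover $w = v_{\pi^\ast}$ for some admissible $\pi^\ast$, then $\pi^\ast$ is optimal. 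I will invoke this lemma (whose proof is deferred to Appendix \ref{prfverification}) with $w = v_{b^\ast}$ and $\pi^\ast = \pi^{b^\ast}$, which is admissible by the Lemma in Subsection \ref{Sec302}.

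The core of the proof is then to check that $v_{b^\ast}$ satisfies the hypotheses of the verification lemma. First I would record that $v_{b^\ast}$ is concave with density $v_{b^\ast}' = \un\nu$ (from Lemmas \ref{Lem302} and \ref{Lem405}, covering all cases via the case analysis on regularity of $0$ for $Y^0$); since $\un\nu(x)=\beta\bE_x[e^{-qK^{p^\ast}_0}]$ takes values in $[0,\beta]$ and equals $1$ at $b^\ast$ by \eqref{a05} and the definition of $p^\ast$, concavity gives $v_{b^\ast}'(x)\geq 1$ for $x\leq b^\ast$ (so the dividend term $\alpha(1-w')\leq0$ above the barrier is handled by the sign of $w'-1$) and $v_{b^\ast}'(x)\leq 1\leq\beta$ elsewhere; one also needs $v_{b^\ast}'\leq\beta$ globally, which holds since $\un\nu\leq\beta$. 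Next, the regularity: in the bounded-variation case \eqref{4} is a pathwise ODE and $v_{b^\ast}$ is automatically smooth enough; in the unbounded-variation case this is exactly where Assumption \ref{Ass402} enters, giving $v_{b^\ast}\in C^{(2)}_{\text{line}}$. Finally, and most substantively, one must verify the HJB inequality itself: on $(b^\ast,\infty)$ the process $Z^{b^\ast}$ follows $Y^{b^\ast}$ (dividends paid at rate $\alpha$, or $\delta$ on the boundary in Case $2$) until it reaches $0$, so by the strong Markov property $v_{b^\ast}$ solves $\cL v_{b^\ast} - q v_{b^\ast} + \alpha(1-v_{b^\ast}') = 0$ there and $\cL v_{b^\ast}-qv_{b^\ast}\leq 0$ follows from $v_{b^\ast}'\leq 1$; on $(0,b^\ast)$ it satisfies $\cL v_{b^\ast} - q v_{b^\ast} = 0$ (no control) and $\alpha(1-v_{b^\ast}')\leq 0$ follows from $v_{b^\ast}'\geq 1$. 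At $0$ one uses the reflection/capital-injection mechanism and the defining property of $b^\ast$ and $p^\ast$ (inequalities \eqref{a004}, \eqref{a05}) to get the boundary condition $v_{b^\ast}'(0{+})\leq\beta$ and non-negativity.

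The main obstacle I expect is the unbounded-variation case, specifically establishing that $v_{b^\ast}$ is regular enough to apply Itô's formula in the verification lemma and that the HJB inequality holds in the pointwise (not merely a.e.) sense required there — this is precisely why Assumption \ref{Ass402} (local boundedness and a.e.\ continuity of $\nu_b'$) is imposed, and it is presumably used together with Lemma \ref{Lem505} to control $\cL v_{b^\ast}$ near the barrier $b^\ast$, where $v_{b^\ast}'$ has a kink. A secondary delicate point is the proof of the inequality $v_{b^\ast}'(x)\geq 1$ for $x\leq b^\ast$ in Case $2$ with $b^\ast = 0$ and $0$ irregular for itself — here the strategy $\pi^{b^\ast}$ is the new one introduced in this paper, and Remark \ref{Rem403} (optimality of any barrier in $[\hat b^\ast, b^\ast]$) suggests the argument is somewhat flexible, but the density $\un\nu$ built from $K^{p^\ast}_0$ with the randomisation $V_{p^\ast}$ must be shown to patch correctly across $x=0$; this is handled by Lemma \ref{Lem405} and the continuity-of-hitting-times lemma of Appendix \ref{Sec0D}. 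Once these two points are in place, the verification lemma closes the argument immediately.
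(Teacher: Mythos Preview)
Your plan is essentially the same as the paper's: invoke the verification lemma (Lemma \ref{Lem401}) with $w=v_{b^\ast}$, check smoothness via Lemmas \ref{Lem302}, \ref{Lem405} and Assumption \ref{Ass402}, and verify the HJB inequality using Lemma \ref{Lem505} together with the sign of $1-v_{b^\ast}'$ on either side of $b^\ast$, which is exactly what the paper does. A few small corrections to your statement of the hypotheses: the verification lemma does not ask for $w\geq 0$ but only $\inf_{x\geq 0}w(x)>-m$ (your \eqref{veri3}); the HJB is written as $\sup_{r\in[0,\alpha]}(\cL w - qw - rw' + r)\leq 0$ rather than a two-term max, though these are equivalent since the supremum is linear in $r$; and no separate boundary argument at $0$ is needed --- conditions \eqref{veri1}--\eqref{veri3} are posed on $(0,\infty)$ and the $\varepsilon$-shift trick in Appendix \ref{prfverification} handles the boundary.
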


Before giving its proof, 
we give classes of functions and an operator that are almost the same as those defined in \cite[Section 2]{Nob2021}. 
\par
Let $C^{(1)}_{\text{line}}$ be the set of functions $f\in C(\bR)$ satisfying the following conditions.
\begin{enumerate}
\item 
The function $f$ satisfies 
\begin{align}
\absol{f(x)} < a_1 \absol{x\land 0}+ a_2 ,\quad x\in \bR \label{1}
\end{align}
for some $a_1, a_2 >0$. 
\item
The function $f$ has the locally bounded density $f^\prime$ with respect to the Lebesgue measure on $(0, \infty)$, i.e., there exists a locally bounded measurable function $f^\prime$ on $(0, \infty)$ such that 
\begin{align}
f(y)- f(x) = \int_x^y f^\prime (u) du , \quad x, y\in (0, \infty) \text{ with }x<y . 
\end{align}
\end{enumerate}
Let $C^{(2)}_{\text{line}}$ be the set of functions $f\in C^{(1)}_{\text{line}}$ such that $f$ is continuously differentiable on $(0, \infty)$ and 
the derivative $f^\prime$ has the locally bounded density $f^{\prime\prime}$ with respect to the Lebesgue measure on $(0, \infty)$. 
Let $\cL$ be the operator applied to $f \in C^{(1)}_{\text{line}}$ (resp.\ $C^{(2)}_{\text{line}}$) with a fixed density $f^\prime$ (resp.\ a fixed density of the derivative $f^{\prime\prime}$) for the case in which $X$ has bounded (resp.\ unbounded) variation paths with 
\begin{align}
\cL f(x) {:=} \gamma f^\prime (x) + \frac{1}{2} \sigma^2 f^{\prime\prime}(x) +\int_{\bR \backslash \{0\}} \rbra{f(x+z)-f(x)-f^\prime (x)z1_{\{|z|<1\}}}\Pi(dz), \quad x \in (0, \infty). 
\end{align}
This operator is well defined by \cite[Remark 2.4]{Nob2021}. 
\begin{Rem}\label{Rem504}
By the same argument as in \cite[Remark 2.5]{Nob2021}, for $f\in C^{(1)}_{\text{line}}$ (resp.\ $C^{(2)}_{\text{line}}$), the map
\begin{align}
x&\mapsto \int_{\bR \backslash \{0\}}(f(x+z)-f(x))\Pi (dz) \\
\bigg{(}\text{resp.\ }& x\mapsto \int_{\bR \backslash \{0\}}(f(x+z)-f(x)-f^\prime (x) z1_{\{|z|<1\}})\Pi (dz)\bigg{)}
\end{align}
is continuous on $(0, \infty)$ when $X$ has bounded (resp.\ unbounded) variation paths. 
\end{Rem}
\par
Using the operator $\cL$, we can give the following verification lemma that gives the sufficient condition for being an optimal strategy.  
\begin{Lem}\label{Lem401}
Suppose that $X$ has bounded (resp.\ unbounded) variation paths. 
Let $w$ be a function belonging to $C^{(1)}_{\text{line}}$ (resp.\ $C^{(2)}_{\text{line}}$). 
We fix the density $w^\prime$ of $w$ (resp.\ the density $w^{\prime\prime}$ of the derivative $w^\prime$) with respect to the Lebesgue measure and suppose the following. 
\begin{align}
\sup_{r\in[0, \alpha]} \rbra{\cL w (x)-qw(x)-rw^\prime(x)+r}\leq 0, \quad x>0, \label{veri1}\\
w^\prime (x)\leq \beta, \quad x>0, \label{veri2}\\
\inf_{x\in[0, \infty)}w(x)>-m, \quad\text{ for some }m. \label{veri3}
\end{align}
Then $w(x)\geq \sup_{\pi\in\cA}v_\pi(x)$ for $x\in\bR$. 
\end{Lem}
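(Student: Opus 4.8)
The plan is to run the standard verification (Dynkin--It\^o) argument. Fix an arbitrary strategy $\pi\in\cA$ and a starting point $x\in\bR$, write $U=U^\pi$ for the resulting process and set $U_{0-}:=x$ (so that a possible atom of $R^\pi$ at time $0$ shows up as a jump of $U$); then $U_t\geq0$ for all $t\geq0$ by \eqref{91}. First I would apply the appropriate change-of-variables formula to $t\mapsto e^{-qt}w(U_t)$: in the bounded-variation case $U$ is a finite-variation process and $w\in C^{(1)}_{\text{line}}$, so the pathwise Lebesgue--Stieltjes formula applies; in the unbounded-variation case $w\in C^{(2)}_{\text{line}}$ and one uses the It\^o--Meyer formula, with continuous quadratic-variation part $\sigma^2 t$. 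Regrouping the drift, the Gaussian term and the compensated jumps of $X$ into $\cL$, and using that $L^\pi$ is continuous with density $l^\pi$, this yields, along a localizing sequence $\{\tau_k\}$ with $\tau_k\uparrow\infty$,
\begin{align}
e^{-q(t\wedge\tau_k)}w(U_{t\wedge\tau_k})=w(x)
&+\int_0^{t\wedge\tau_k}e^{-qs}\rbra{\cL w(U_{s-})-qw(U_{s-})-l^\pi_s w^\prime(U_{s-})}ds\\
&+\,(\text{reflection contribution of }R^\pi)+M_{t\wedge\tau_k},
\end{align}
with $M$ a martingale null at $0$; the operator $\cL$ and the integrand are well defined on $(0,\infty)$ by $w\in C^{(1)}_{\text{line}}$ (resp.\ $C^{(2)}_{\text{line}}$), Remark~\ref{Rem504} and Assumption~\ref{Ass201} (which makes the jump part, which reaches into $(-\infty,0]$, integrable).

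Now the three hypotheses enter. Applying \eqref{veri1} with $r=l^\pi_s\in[0,\alpha]$ gives $\cL w(U_{s-})-qw(U_{s-})-l^\pi_s w^\prime(U_{s-})\leq -l^\pi_s$ whenever $U_{s-}>0$, so the $ds$-integral is $\leq -\int_0^{t\wedge\tau_k}e^{-qs}l^\pi_s\,ds$; one must also check that the times with $U_{s-}=0$, where $\cL w$ is undefined, contribute nothing to this integral, which is done path by path according to the path type. By \eqref{veri2}, $w^\prime(U_{s-})\leq\beta$ wherever the continuous part of $R^\pi$ increases, and each jump of $R^\pi$ (following a negative jump of $X$ that would carry $U$ below $0$) changes $w$ by at most $\beta$ times the injected amount, via \eqref{veri2} on $[0,\infty)$ together with the slope-$\leq\beta$ behaviour of $w$ on $(-\infty,0]$; hence the reflection contribution is $\leq\beta\int_{[0,t\wedge\tau_k]}e^{-qs}\,dR^\pi_s$. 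Finally, \eqref{1} forces $|w|\leq a_2$ on $[0,\infty)$, so with \eqref{veri3} the boundary term $e^{-q(t\wedge\tau_k)}w(U_{t\wedge\tau_k})$ is bounded by $\max(m,a_2)$.

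Taking expectations kills $M$, and after rearranging,
\begin{align}
w(x)\geq\bE_x\sbra{\int_0^{t\wedge\tau_k}e^{-qs}l^\pi_s\,ds-\beta\int_{[0,t\wedge\tau_k]}e^{-qs}\,dR^\pi_s}+\bE_x\sbra{e^{-q(t\wedge\tau_k)}w(U_{t\wedge\tau_k})}.
\end{align}
Letting $k\to\infty$ (dominated convergence for the bounded boundary term, monotone convergence for the dividend integral, which is $\leq\alpha/q$, and for the injection integral, which has finite expectation by \eqref{90}) and then $t\to\infty$ (the boundary term tends to $0$ since $|w|\leq\max(m,a_2)$ on $[0,\infty)$) gives $w(x)\geq\bE_x[\int_{[0,\infty)}e^{-qs}\,dL^\pi_s-\beta\int_{[0,\infty)}e^{-qs}\,dR^\pi_s]=v_\pi(x)$. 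As $\pi\in\cA$ was arbitrary, $w(x)\geq\sup_{\pi\in\cA}v_\pi(x)$.

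The main obstacle is making this change-of-variables step rigorous in the low-regularity setting: $w$ is only $C^1$ (resp.\ $C^2$) on the open half-line $(0,\infty)$, while $U$ lives on $[0,\infty)$ and may sit at, or be instantaneously pushed below and reflected from, the point $0$. One must carefully track the jump terms of $X$ (whose compensator evaluates $w$ and $\cL w$ at points of $(-\infty,0]$, finite by \eqref{1} and Assumption~\ref{Ass201}) together with the jumps of $R^\pi$, and verify that the occupation time of $U$ at $\{0\}$ does not destroy the inequality coming from \eqref{veri1}; this is carried out in Appendix~\ref{prfverification}.
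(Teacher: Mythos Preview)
Your overall verification structure---apply It\^o/Meyer--It\^o to $e^{-qt}w(U^\pi_t)$, use \eqref{veri1} with $r=l^\pi_s$ on the drift, use \eqref{veri2} on the reflection terms, localize, take expectations, and pass to the limit via \eqref{veri3}---matches the paper's Appendix~\ref{prfverification} exactly.

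The difference lies in how the boundary at $0$ is handled, and here there is a genuine gap in your proposal. You apply the change-of-variables formula directly to $w$ and then plan to argue that the times with $U_{s-}=0$ ``contribute nothing'' to the $ds$-integral ``path by path according to the path type''. But the issue is more serious than occupation time: the densities $w^\prime$ (resp.\ $w^{\prime\prime}$) are only assumed locally bounded on the \emph{open} half-line $(0,\infty)$, so for a general $\pi\in\cA$ the integrands $w^\prime(U^\pi_{s-})$ and $\cL w(U^\pi_{s-})$ in the It\^o decomposition may fail to be locally integrable when $U^\pi$ spends time near $0$, and the formula itself is in jeopardy before any HJB inequality can be invoked. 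For arbitrary admissible $\pi$ you have no a priori control on how $U^\pi$ behaves near $0$, so a path-by-path argument is not straightforward.

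The paper sidesteps this entirely with a one-line device you do not use: it works with the shifted function $w_\varepsilon(x):=w(x+\varepsilon)$. Since $U^\pi_t\geq 0$, one has $U^\pi_t+\varepsilon\geq\varepsilon>0$, so $w_\varepsilon^\prime$ (resp.\ $w_\varepsilon^{\prime\prime}$) is locally bounded on $[0,\infty)$ and the Meyer--It\^o formula applies cleanly; moreover \eqref{veri1}--\eqref{veri3} hold for $w_\varepsilon$ on $x\geq 0$ rather than just $x>0$. One obtains $w_\varepsilon(x)\geq v_\pi(x)$ for every $\varepsilon>0$ and then lets $\varepsilon\downarrow 0$. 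The paper's closing Remark makes this point explicitly: the integrals in the It\^o expansion may not even be well defined for $w$ itself, which is precisely why $w_\varepsilon$ is introduced.
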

The proof of Lemma \ref{Lem401} is almost the same as the proof in the spectrally positive cases in \cite[Appendix A]{PerYam2016}.  
However, unlike the spectrally positive cases, it uses a more general operator $\cL$ than the infinitesimal generator, and it is necessary to take the same precautions as in \cite[Remark 5.9]{Nob2021}. Thus, we give the proof of Lemma \ref{Lem401} in Appendix \ref{prfverification}. 
\par
For the optimality of the strategy $\pi^{b^\ast}$, it is sufficient to prove that the function $v_{b^\ast}$ satisfies the conditions in Lemma \ref{Lem401}. 
\par
We confirm that $v_{b^\ast}$ belongs to $C^{(1)}_{\text{line}}$ (resp.\ $C^{(2)}_{\text{line}}$) when $X$ has bounded (resp.\ unbounded) variation paths. 
From Remark \ref{Rem203} and since 
\begin{align}
v_{b^\ast}(x)= v_{b^\ast}(0)+\beta x, \quad x < 0, \label{76}
\end{align}
the function $v_{b^\ast}$ satisfies \eqref{1}. 
In addition, by Lemmas \ref{Lem302} and \ref{Lem405}, 
the function $v_{b^\ast}$ belongs to $C^{(1)}_{\text{line}}$. 
By Lemma \ref{Lem302} and Assumption \ref{Ass402}, the function $v_{b^\ast}$ belongs to $C^{(2)}_{\text{line}}$ when $X$ has unbounded variation paths. 
\par
In the following, we prove that $v_{b^\ast}$ satisfies the other conditions in Lemma \ref{Lem401}. 
\begin{Lem}\label{Lem505}
When $X$ has bounded variation paths, we have 
\begin{align}
\cL v_{b^\ast} (x)-qv_{b^\ast} (x)=0, \qquad &x\in (0,  b^\ast],  \label{001} \\
\cL v_{b^\ast} (x) -\alpha v_{b^\ast}^\prime (x)-qv_{b^\ast} (x)+\alpha=0, \qquad &x > b^\ast.   \label{002} 
\end{align}
When $X$ has unbounded variation paths, we can define $v_{b^\ast}^{\prime\prime}$ to satisfy \eqref{001} and \eqref{002} and to be continuous a.e.\ and locally bounded on $(0, \infty)$.
\end{Lem}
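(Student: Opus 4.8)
The plan is to read off \eqref{001}--\eqref{002} from the martingale property of the discounted payoff of $\pi^{b^\ast}$, combined with an It\^o-type change-of-variables formula and the regularity of $v_{b^\ast}$ recorded above ($v_{b^\ast}\in C^{(1)}_{\text{line}}$ in the bounded-variation case, $C^{(2)}_{\text{line}}$ in the unbounded-variation case, with density $v_{b^\ast}'=\un{\nu}$ by Lemmas \ref{Lem302} and \ref{Lem405}). Since $U^{\pi^{b^\ast}}=Z^{b^\ast}$ is strongly Markov with value function $v_{b^\ast}$ (Remark \ref{Rem206}), since $v_{b^\ast}$ is bounded on $[0,\infty)$ (it is non-decreasing there because $\un{\nu}\ge0$, and bounded above by Remark \ref{Rem203}) and satisfies the linear bound \eqref{76} below $0$, and since $\pi^{b^\ast}$ satisfies \eqref{90}, the process
\begin{align}
M_t{:=}e^{-qt}v_{b^\ast}(Z^{b^\ast}_t)+\int_0^t e^{-qs}l^{\pi^{b^\ast}}_s\,ds-\beta\int_{[0,t]}e^{-qs}\,dR^{\pi^{b^\ast}}_s,\qquad t\ge0,
\end{align}
is a uniformly integrable $\bP_x$-martingale with $M_t=\bE_x[M_\infty\mid\cF_t]$ for every $x\in(0,\infty)$, the identification using the Markov property at time $t$ together with $e^{-qt}v_{b^\ast}(Z^{b^\ast}_t)\to0$.

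Assume first that $X$ has bounded variation paths. Then $v_{b^\ast}\in C^{(1)}_{\text{line}}$ and, by \eqref{125}, $Z^{b^\ast}_t=X_t-\int_0^t h^{b^\ast}(Z^{b^\ast}_s)\,ds+R^{\pi^{b^\ast}}_t$ with all three summands of bounded variation. First I would apply the change-of-variables formula for bounded-variation processes to $t\mapsto e^{-qt}v_{b^\ast}(Z^{b^\ast}_t)$ and compensate the jump sum (legitimate by Assumption \ref{Ass201} near $-\infty$, by $\int_{(-1,1)\backslash\{0\}}|z|\,\Pi(dz)<\infty$ near $0$, and by the local Lipschitz property of $v_{b^\ast}$), obtaining
\begin{align}
M_t&=v_{b^\ast}(x)+(\text{local martingale})+I_t\\
&\quad+\int_0^t e^{-qs}\Big(\cL v_{b^\ast}(Z^{b^\ast}_s)-qv_{b^\ast}(Z^{b^\ast}_s)-h^{b^\ast}(Z^{b^\ast}_s)v_{b^\ast}'(Z^{b^\ast}_s)+l^{\pi^{b^\ast}}_s\Big)\,ds,
\end{align}
where $I_t$ collects the contribution of the reflection at $0$. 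The key bookkeeping step is that $I_t\equiv0$: a downward jump of $X$ of size $z$ crossing $0$ triggers an instantaneous capital injection (costing $-\beta\,dR^{\pi^{b^\ast}}$), and the relation $v_{b^\ast}(y)=v_{b^\ast}(0)+\beta y$ on $y\le0$ from \eqref{76} makes the resulting change of $M$ equal to exactly $v_{b^\ast}(Z^{b^\ast}_{s-}+z)-v_{b^\ast}(Z^{b^\ast}_{s-})$, so that the jump integral inside $\cL$ may legitimately run over all of $\bR\backslash\{0\}$ with $v_{b^\ast}$ taken in its linear extension below $0$; while the continuous reflection local time at $0$ contributes $\int_0^t e^{-qs}(v_{b^\ast}'(0+)-\beta)\,dR^{\pi^{b^\ast},c}_s$, which vanishes because $R^{\pi^{b^\ast}}$ has no continuous part when $\delta\ge0$ and because $v_{b^\ast}'(0+)=\beta$ when $\delta<0$ (then $0$ is regular for $(-\infty,0)$, so $\un{\nu}(0)=\beta$ by \eqref{a015}). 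Since $M$ is a martingale, its predictable absolutely continuous drift must vanish, which forces
\begin{align}
\cL v_{b^\ast}(Z^{b^\ast}_s)-qv_{b^\ast}(Z^{b^\ast}_s)-h^{b^\ast}(Z^{b^\ast}_s)v_{b^\ast}'(Z^{b^\ast}_s)+l^{\pi^{b^\ast}}_s=0
\end{align}
for Lebesgue-a.e.\ $s$, $\bP_x$-a.s.

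To upgrade this to the pointwise statements, note that $h^{b^\ast}=l^{\pi^{b^\ast}}=0$ on $(0,b^\ast)$ and $h^{b^\ast}=l^{\pi^{b^\ast}}=\alpha$ on $(b^\ast,\infty)$, so for a starting point $x$ in either interval an occupation-time argument for $Z^{b^\ast}$ (its occupation measure near $x$ being absolutely continuous since the drift $\delta$ is non-zero in Case $1$; and in Case $2$ on $(0,b^\ast)$ the process may sit at $x$, giving the identity at $x$ directly) yields $\cL v_{b^\ast}(x)-qv_{b^\ast}(x)=0$ for a.e.\ $x\in(0,b^\ast)$ and $\cL v_{b^\ast}(x)-\alpha v_{b^\ast}'(x)-qv_{b^\ast}(x)+\alpha=0$ for a.e.\ $x\in(b^\ast,\infty)$; a bootstrap using these identities and the continuity of the jump integral (Remark \ref{Rem504}) then shows $v_{b^\ast}'$, hence $\cL v_{b^\ast}$, is continuous on each of the two intervals, so the identities hold at every point there. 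For the endpoint $x=b^\ast$ in \eqref{001}: in Case $2$, $Z^{b^\ast}$ has positive occupation time at $\{b^\ast\}$ and the identity there reads $\cL v_{b^\ast}(b^\ast)-qv_{b^\ast}(b^\ast)-\delta(v_{b^\ast}'(b^\ast)-1)=0$, which reduces to \eqref{001} because $v_{b^\ast}'(b^\ast)=\un{\nu}(b^\ast)=1$ by \eqref{a05} and \eqref{a015}; in Case $1$ one lets $x\uparrow b^\ast$ and uses Remark \ref{Rem504}. When $X$ has unbounded variation paths, $v_{b^\ast}\in C^{(2)}_{\text{line}}$ by Lemma \ref{Lem302} and Assumption \ref{Ass402}, so the classical It\^o formula applies in place of the bounded-variation change of variables, the reflection term again vanishes (now $0$ is regular for $(-\infty,0)$, so $v_{b^\ast}'(0+)=\beta$), and the same argument produces \eqref{001}--\eqref{002} with the chosen density $v_{b^\ast}^{\prime\prime}$ of $v_{b^\ast}'$; its a.e.\ continuity and local boundedness follow from Assumption \ref{Ass402} and Remark \ref{Rem504}. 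I expect the main obstacle to be the second paragraph: justifying the change-of-variables formula under only $C^{(1)}$ (resp.\ $C^{(2)}$) regularity in the presence of the reflection at $0$, and carrying out the bookkeeping that makes both the reflection local time and the overshooting downward jumps contribute nothing beyond the integro-differential term; the subsequent passage from an a.e.-in-time identity to a genuinely pointwise one is a secondary difficulty, delicate in Case $2$ where $Z^{b^\ast}$ can stay constant or leave a level by a jump.
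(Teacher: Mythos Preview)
Your approach is correct in outline but genuinely different from the paper's, and the paper's route avoids precisely the obstacle you identify.

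The paper does not work with the full refracted--reflected process $Z^{b^\ast}$. Instead, for \eqref{001} it uses the strong Markov property to write
\[
v_{b^\ast}(x)=\bE_x\bigl[e^{-q(\tau^-_0\wedge\tau^+_{b^\ast})}v_{b^\ast}(X_{\tau^-_0\wedge\tau^+_{b^\ast}})\bigr],\qquad x\in(0,b^\ast),
\]
and shows that $\{e^{-q(t\wedge\tau^-_0\wedge\tau^+_{b^\ast})}v_{b^\ast}(X_{t\wedge\tau^-_0\wedge\tau^+_{b^\ast}})\}$ is a $\bP_x$-martingale; then it invokes \cite[Lemma 5.7]{Nob2021} to pass from this martingale property to $(\cL-q)v_{b^\ast}=0$ on $(0,b^\ast)$. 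For \eqref{002} it does the analogous thing with $X^{(\alpha)}$ stopped at $\tau^{(\alpha),-}_{b^\ast}$, via the representation $v_{b^\ast}=\xi^{(\alpha)}_{b^\ast}+\alpha/q$. Because the process is stopped before it reaches $0$, the reflection never enters, and because the two intervals are handled separately with $X$ and $X^{(\alpha)}$, the refraction never enters either. This sidesteps the bookkeeping at $0$ (your $I_t\equiv0$ argument), the need for $v_{b^\ast}'(0+)=\beta$, and the occupation-time upgrade from ``a.e.\ in $s$ along the path'' to ``pointwise in $x$''.

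Your approach trades one global It\^o decomposition for the paper's two local ones. The cost is exactly what you flagged: justifying Meyer--It\^o for $v_{b^\ast}$ under only $C^{(1)}_{\text{line}}$/$C^{(2)}_{\text{line}}$ regularity in the presence of the reflection, and then the occupation-time argument (which is delicate in Case~2, where the process can sit at a level). The paper's stopped-process trick eliminates both issues. For the endpoint $x=b^\ast$, the paper also splits by sign of $\delta$: when $\delta>0$ it shows $v_{b^\ast}'$ is left-continuous at $b^\ast$ and lets $x\uparrow b^\ast$ in \eqref{001}; when $\delta\le0$ it shows right-continuity, lets $x\downarrow b^\ast$ in \eqref{002}, and then uses $v_{b^\ast}'(b^\ast)=1$ from \eqref{a05} to recover \eqref{001} at $b^\ast$. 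Your ``Case~1: let $x\uparrow b^\ast$'' needs the left-continuity of $v_{b^\ast}'$ at $b^\ast$, which is Lemma~\ref{LemC01} and is not automatic from Remark~\ref{Rem504} alone.
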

\begin{proof}
(i) We prove \eqref{001} {on $(0, b^\ast)$}. In addition, we prove \eqref{001} {at $b^\ast$} when $X$ has unbounded variation paths. 
By the strong Markov property and by the definitions of $L^{\pi^{b^\ast}}$ and $R^{\pi^{b^\ast}}$, we have 
\begin{align}
v_{b^\ast}(x)=
\bE_x \sbra{e^{-q(\tau^-_0\land \tau^+_{b^\ast})}v_{b^\ast}(X_{\tau^-_0 \land \tau^+_{b^\ast}})  }, \quad x\in\bR.\label{003}
\end{align}
The process $\cbra{e^{-q(t\land\tau^-_0\land\tau^+_{b^\ast})}v_{b^\ast}(X_{t\land\tau^-_0\land\tau^+_{b^\ast}}):t\geq 0}$ is a martingale for $\bP_x$ with $x \in{(0, b^\ast)}$. 
In fact, by Remark \ref{Rem203}, the monotonicity of $v_{b^\ast}$, the compensation formula of Poisson point processes, \eqref{76}, and \eqref{61}, we have, for $x \in (0, b^\ast)$ and $t\geq 0$, 
\begin{align}
\bE_x& \sbra{\absol{e^{-q(t\land \tau^-_0\land\tau^+_{b^\ast})}v_{b^\ast}(X_{t\land \tau^-_0\land\tau^+_{b^\ast}})}}
\leq \frac{\alpha}{q} + \absol{\bE_x \sbra{e^{-q\tau^-_0}
v_{b^\ast}(X_{\tau^-_0}) \rbra{1_{\{X_{\tau^-_0}\geq -1\}}+1_{\{X_{\tau^-_0}<-1\}}} }}\\
&\leq \frac{\alpha}{q} + \rbra{\frac{\alpha}{q} \lor|v_{b^\ast}(-1)| + {\bE_x \sbra{\int_{[0, \infty)\times ( \infty, -1 )}e^{-qt} 
\absol{v_{b^\ast}(X_{t-}+y)}1_{\{X_{t-} \geq 0 \}}
\cN(d t\times dy ) }}}\\
&= \frac{\alpha}{q} + \rbra{\frac{\alpha}{q} \lor|v_{b^\ast}(-1)| + {\bE_x \sbra{\int_0^\infty dt \int_{(-\infty ,-1 )}e^{-qt} 
\absol{v_{b^\ast}(X_{t-}+y)}1_{\{X_{t-} \geq 0 \}}
\Pi ( dy ) }}}\\
&\leq \frac{\alpha}{q} + \rbra{\frac{\alpha}{q} \lor|v_{b^\ast} (-1)|+{ \int_{(-\infty, -1)}\frac{1}{q}\rbra{\frac{\alpha}{q}\lor\absol{v_{b^\ast}(0)+\beta y}}\Pi(dy)}}<\infty, 
\end{align}
where $\cN$ is the Poisson random measure on $([0, \infty)\times \bR, \cB [0, \infty)\times\cB (\bR))$ associated with $ds \times \Pi(dx)$.
In addition, we have, for $x \in (0, b^\ast)$ and $t\geq 0$, 
\begin{align}
&\bE_x \sbra{e^{-q(\tau^-_0\land\tau^+_{b^\ast})}v_{b^\ast}(X_{\tau^-_0\land\tau^+_{b^\ast}})\mid \cF_t}\\
&=\bE_x \sbra{e^{-q(\tau^-_0\land\tau^+_{b^\ast})}v_{b^\ast}(X_{\tau^-_0\land\tau^+_{b^\ast}})1_{\{ \tau^-_0\land\tau^+_{b^\ast}\leq t\}} 
+e^{-q(\tau^-_0\land\tau^+_{b^\ast})}v_{b^\ast}(X_{\tau^-_0\land\tau^+_{b^\ast}})1_{\{t<\tau^-_0\land\tau^+_{b^\ast}\}} \mid \cF_t} \\
&=e^{-q(\tau^-_0\land\tau^+_{b^\ast})}v_{b^\ast}(X_{\tau^-_0\land\tau^+_{b^\ast}})1_{\{ \tau^-_0\land\tau^+_{b^\ast}\leq t\}}\\
&\qquad \qquad \qquad 
+e^{-qt}\bE_x \sbra{\rbra{e^{-q(\tau^-_0\land\tau^+_{b^\ast})}v_{b^\ast}(X_{\tau^-_0\land\tau^+_{b^\ast}})}\circ \theta_t \mid \cF_t}1_{\{t<\tau^-_0\land\tau^+_{b^\ast}\}}\\
&=e^{-q(\tau^-_0\land\tau^+_{b^\ast})}v_{b^\ast}(X_{\tau^-_0\land\tau^+_{b^\ast}})1_{\{ \tau^-_0\land\tau^+_{b^\ast}\leq t\}}
+e^{-qt}v_{b^\ast}(X_t)1_{\{t<\tau^-_0\land\tau^+_{b^\ast}\}}\\
&=e^{-q(t\land\tau^-_0\land\tau^+_{b^\ast})}v_{b^\ast}(X_{t\land\tau^-_0\land\tau^+_{b^\ast}}), 
\end{align} 
where in the third equality, we used the strong Markov property and \eqref{003}. 
By Remark \ref{Rem504}, 
and since $v_{b^\ast}$ is continuous 
and $v_{{b^\ast}}^\prime$ is non-increasing 
by Lemma \ref{Lem302}, the map $x\mapsto \cL v_{b^\ast} (x)-qv_{b^\ast}(x)$ is the sum of a continuous function and a monotone function on $(0, \infty)$ (resp.\ the sum of a continuous function on $(0, \infty)$ and $\frac{1}{2}\sigma^2 v_{b^\ast}^{\prime\prime}$, which {can be taken to be} continuous a.e.\ by Assumption \ref{Ass402}) when $X$ has bounded (resp.\ unbounded) variation paths. 
Thus, by the same argument as that of the proof of \cite[Lemma 5.7]{Nob2021}, we have \eqref{001} {on $(0, b^\ast)$} when $X$ has bounded variation paths. 
In addition, we can take the density $v_{b^\ast}^{\prime\prime}$ of the function $v_{b^\ast}^\prime$ to be 
continuous and satisfying \eqref{001} on $(0, b^\ast]$. 
\par
(ii) We prove \eqref{002}. 
Using the strong Markov property, we have, for $x \in \bR$, 
\begin{align}
v_{b^\ast}(x)&=
\bE_x \sbra{e^{-q\tau^{(\alpha),-}_{b^\ast}}v_{b^\ast}(X^{(\alpha)}_{\tau^{(\alpha),-}_{b^\ast}}) ;\tau^{(\alpha),-}_{b^\ast}<\infty }
+\alpha\bE_x \sbra{\int_0^{\tau^{(\alpha),-}_{b^\ast}}e^{-qt} dt }\\
&=\xi^{(\alpha)}_{b^\ast}(x)
+\frac{\alpha}{q} , \label{62}
\end{align}
where 
\begin{align}
\xi^{(\alpha)}_{b^\ast}(x)=\bE_x \sbra{e^{-q\tau^{(\alpha),-}_{b^\ast}}v_{b^\ast}(X^{(\alpha)}_{\tau^{(\alpha),-}_{b^\ast}}) ;\tau^{(\alpha),-}_{b^\ast}<\infty }
-\frac{\alpha}{q}\bE_x \sbra{e^{-q \tau^{(\alpha),-}_{b^\ast}}}. 
\end{align}
By \eqref{62}, the function $\xi^{(\alpha)}_{b^\ast}$ has the density $\xi^{(\alpha)\prime}_{b^\ast}$ with respect to the Lebesgue measure, which is 
equal to $v_{b^\ast}^\prime$. 
In addition, 
the function $\xi^{(\alpha)\prime}_{b^\ast}$ has a density {which is locally bounded and continuous a.e.\ }on $(0, \infty )$ 
when $X$ has unbounded variation paths {by Lemma \ref{Lem302} and Assumption \ref{Ass402} and since the function $\xi^{(\alpha)}_{b^\ast}$ has the same derivative as $v_{b^\ast}$ by \eqref{62}}. 
By the same argument as in (i), 
the process $\cbra{e^{-q(t\land\tau^{(\alpha),-}_{b^\ast})}\xi^{(\alpha)}_{b^\ast}(X^{(\alpha)}_{t\land\tau^{(\alpha),-}_{b^\ast}}):t\geq 0}$ is a martingale for $\bP_x$ with $x \in{(b^\ast , \infty)}$. 
Thus, again by the same argument as in (i) (here, we may need to use Lemma \ref{Lem405} instead of Lemma \ref{Lem302} when $b^\ast=0$ {and $0$ is irregular for itself for $Y^0$}), we have
\begin{align}
\cL \xi^{(\alpha)}_{b^\ast} (x) -\alpha \xi^{(\alpha)\prime}_{b^\ast} (x)-q\xi^{(\alpha)}_{b^\ast} (x)=0, \qquad x>b^\ast \label{004}
\end{align} 
when $X$ has bounded variation paths. 
In addition, we can take the density $ \xi^{(\alpha)\prime\prime}_{b^\ast}$ of the function $ \xi^{(\alpha)\prime}_{b^\ast} $ to be 
continuous and satisfying \eqref{004} on $(b^\ast, \infty)$. 
We put $v_{b^\ast}^{\prime\prime}(x) = \xi^{(\alpha)\prime\prime}_{b^\ast}(x)$ for $x>b^\ast$, which is the density of $v_{b^\ast}^\prime$ with respect to the Lebesgue measure, when $X$ has unbounded variation paths. 
From \eqref{62} 
and \eqref{004}
, we obtain \eqref{002} in both cases of $X$ having bounded and unbounded variation paths. 
\par
{(iii) 
We prove \eqref{001} {at $b^\ast$} when $X$ has bounded variation paths and $b^\ast>0$. 
We assume that $\delta>0$. Then, $0$ is regular for $(0, \infty)$ for $X$, and thus 
by the strong Markov property and \eqref{003}, and since $v_{b^\ast}^\prime(=\un{\nu})$ is non-increasing, we have
\begin{align}
\lim_{x\uparrow b^\ast}v^\prime_{b^\ast}(x)
=&\lim_{x\uparrow b^\ast}\bE_x \sbra{e^{-q\tau^+_{b^\ast}}v_{b^\ast}^\prime (X_{\tau^+_{b^\ast}});\tau^+_{b^\ast}<T^{p^\ast}_0}
+\lim_{x\uparrow b^\ast}\beta\bE_x \sbra{e^{-qT^{p^\ast}_0} ;T^{p^\ast}_0<\tau^+_{b^\ast}}\\
\leq&  \lim_{x\uparrow b^\ast}\bE_x \sbra{e^{-q\tau^+_{b^\ast}};\tau^+_{b^\ast}<T^{p^\ast}_0}v_{b^\ast}^\prime(b^\ast)
+\lim_{x\uparrow b^\ast}\beta\bE_x \sbra{e^{-qT^{p^\ast}_0} ;T^{p^\ast}_0<\tau^+_{b^\ast}}\\
=&v_{b^\ast}^\prime(b^\ast), \label{127}
\end{align}
where
\begin{align}
T^{p^\ast}_0 {:=} \tau^{ -}_0 1_{\{V_{{p^\ast}}=1\}}+ \inf \{t>0 : X_t \leq 0\} 1_{\{V_{{p^\ast}} =0\}}. 
\end{align}
By \eqref{127} and since $v_{b^\ast}^\prime$ is non-increasing, we have $\lim_{x\uparrow b^\ast}v^\prime_{b^\ast}(x)=v_{b^\ast}^\prime(b^\ast)$, and thus the map $x\mapsto \cL v_{b^\ast} (x)-qv_{b^\ast}(x)$ is left-continuous at $b^\ast$. 
From the above argument and (i), we have 
\begin{align}
\cL v_{b^\ast} (b^\ast)-qv_{b^\ast}(b^\ast)=\lim_{x\uparrow b^\ast}\rbra{\cL v_{b^\ast} (x)-qv_{b^\ast}(x)}=0. \label{129}
\end{align} 
We assume that $\delta\leq 0$ which implies $\delta-\alpha <0$. 
Then, $0$ is regular for $(-\infty, 0)$ for $X^{(\alpha)}$, and thus 
by the strong Markov property and \eqref{62}, and since $v_{b^\ast}^\prime$ is non-increasing, we have
\begin{align}
\lim_{x\downarrow b^\ast}v^\prime_{b^\ast}(x)
=&\lim_{x\downarrow b^\ast}\bE_x \sbra{e^{-q\tau^{(\alpha),-}_{b^\ast}}v_{b^\ast}^\prime(X^{(\alpha)}_{\tau^{(\alpha),-}_{b^\ast} }) ;\tau^{(\alpha),-}_{b^\ast}<\infty}
\\
\geq &\lim_{x\downarrow b^\ast}\bE_x \sbra{e^{-q\tau^{(\alpha),-}_{b^\ast}}}v_{b^\ast}^\prime(b^\ast)
=v_{b^\ast}^\prime(b^\ast).\label{128}
\end{align}
By \eqref{128} and since $v_{b^\ast}^\prime$ is non-increasing, we have $\lim_{x\downarrow b^\ast}v^\prime_{b^\ast}(x)=v_{b^\ast}^\prime(b^\ast)$, and thus the map $x\mapsto \cL v_{b^\ast} (x) -\alpha v_{b^\ast}^\prime (x)-qv_{b^\ast} (x)+\alpha$ is right-continuous at $b^\ast$. 
From \eqref{a05}, the above argument, and (ii), we have
\begin{align}
\cL v_{b^\ast} (b^\ast)-qv_{b^\ast}(b^\ast)
=&\cL v_{b^\ast} (b^\ast) -\alpha v_{b^\ast}^\prime (b^\ast)-qv_{b^\ast} (b^\ast)+\alpha\\
=&\lim_{x\downarrow b^\ast}\rbra{\cL v_{b^\ast} (x) -\alpha v_{b^\ast}^\prime (x)-qv_{b^\ast} (x)+\alpha}=0.\label{130}
\end{align}
By \eqref{129} and \eqref{130}, we have \eqref{001} {at $b^\ast$} when $X$ has bounded variation paths and $b^\ast>0$. 
}
\par
From (i), (ii), and (iii), the proof is complete.  
\end{proof}

\begin{proof}[Proof of Theorem \ref{Thm502}]
We have already confirmed that $v_{b^\ast}$ belongs to $C^{(1)}_{\text{line}}$ (resp.\ $C^{(2)}_{\text{line}}$ under Assumption \ref{Ass402}) when $X$ has bounded (resp.\ unbounded) variation paths. 
Thus, it is sufficient to prove that $v_{b^\ast}$ satisfies \eqref{veri1}, \eqref{veri2}, and \eqref{veri3} by Lemma \ref{Lem401}. 
\par
From Lemmas \ref{Lem302} and \ref{Lem405}, 
$v_{b^\ast}$ satisfies \eqref{veri2}. 
In addition, since $v_{b^\ast}$ is non-decreasing and $v_{b^\ast}(0)\in\bR$ by Remark \ref{Rem203} and \eqref{90} with $\pi=\pi^{b^\ast}$ and $x=0$, $v_{b^\ast}$ satisfies \eqref{veri3}. 
For $x \in (0, b^\ast]$, we have $- v_{b^\ast}^\prime(x) +  1\leq 0$ by Lemma \ref{Lem302}{, the definition of $b^\ast$,} and \eqref{a05}, and thus we have 
\begin{align}
\sup_{r\in[0, \alpha]} \rbra{\cL v_{b^\ast} (x)-qv_{b^\ast}(x)-rv_{b^\ast}^\prime(x)+r}=\cL v_{b^\ast} (x)-qv_{b^\ast}(x)=0, 
\end{align}
where in the last equality we used \eqref{001}. 
For $x > b^\ast$, we have $- v_{b^\ast}^\prime(x) +  1\geq 0$ by {Lemmas \ref{Lem302}, \ref{Lem405}, and the definition of $b^\ast$}, 
and thus we have 
\begin{align}
\sup_{r\in[0, \alpha]} \rbra{\cL v_{b^\ast} (x)-qv_{b^\ast}(x)-rv_{b^\ast}^\prime(x)+r}=\cL v_{b^\ast} (x)-qv_{b^\ast}(x)
-\alpha v_{b^\ast}^\prime(x)+\alpha=0, 
\end{align}
where in the last equality we used \eqref{002}. 
Therefore, the function $v_{b^\ast}$ satisfies \eqref{veri1}. 
The proof is complete. 
\end{proof}

\section{Numerical results}\label{numericalresults}
In this section, we present numerical results for $\un{\nu}$ and $v_b$ via Monte Carlo simulation as in \cite
{ NobYam2022}, and we confirm the correctness of our main theorem. 
Since the purpose of this study is not to think of better simulation methods, we use the classical Euler scheme. 
\par
For the simulation, we use the L\'evy process $X$ that has the form 
\begin{align}
X_t =X_0 + 0.6 t + a B_t + \sum_{k=1}^{N^+_t} Z^+_k -\sum_{k=1}^{N^-_t} Z^-_k, \quad t\geq 0 , 
\end{align}
where $\{B_t : t\geq 0\}$ is a standard Brownian motion, $\{N^+_t:t\geq 0\}$ and $\{N^-_t:t\geq 0\}$ are independent Poisson processes with arrival rate $1$, $\{Z^+_k: k\in\bN\}$ is an independent and identically distributed (i.i.d.) sequence of continuous uniform random variables on $[0, 1]$, and $\{Z^-_k: k\in\bN\}$ is an i.i.d.\ sequence of Weibull random variables with shape parameter $2$ and scale parameter $1$. 
In this section, we give simulations in two cases. 
In Cases $1$ and $2$, we assume that $a=0$ and $a=1$, respectively. 
Note that the L\'evy process $X$ in Case $2$ has unbounded variation paths and we did not confirm that $X$ satisfies the conditions in Assumption \ref{Ass402}. 
For the other parameters, we set $q=0.05$, $\beta=1.5$, and $\alpha =0.5$. 
\par
In our simulation, we truncate the time horizon at $T=100$ and discretize $[0, T]$ as $K= 10,000$ equally spaced points with distance $\Delta t
= \frac{T}{K}$ as in \cite
{ NobYam2022}. 
To approximate $\un{\nu}$ and $v_b$, we prepare the set of $N=100,000$ sample paths that approximate $X$ starting from $0$, and we represent these with 
\begin{align}
\hat{X}^{(n)} = \{\hat{X}^{(n)}_{k \Delta t}: k=0, 1, \cdots , K-1\}, \quad n=1,2, \cdots , N. 
\end{align}
Then the approximated sample paths of $L^{\pi^b}$ and $R^{\pi^b}$ when $X$ starts from $x\in\bR$ are 
\begin{align}
\begin{aligned}
\hat{L}^{(n)}(x, b)  &= \{\hat{L}^{(n)}_{k \Delta t} (x, b): k=0, 1, \cdots , K-1\}, \\
\hat{R}^{(n)}(x, b) &= \{\hat{R}^{(n)}_{k \Delta t}(x, b): k=0, 1, \cdots , K-1\}, 
\end{aligned}
\quad n=1,2, \cdots , N,
\end{align}
defined as follows, inductively: for $k=1,2,\cdots, K-1$, 
if $x+\hat{X}^{(n)}_{k \Delta t}-\hat{L}^{(n)}_{(k-1) \Delta t}(x, b)+\hat{R}^{(n)}_{(k-1) \Delta t}(x, b)<0$, then 
\begin{align}
\hat{L}^{(n)}_{k \Delta t}(x, b)=\hat{L}^{(n)}_{(k-1) \Delta t}(x, b), \quad \hat{R}^{(n)}_{k \Delta t}(x, b)=
-(x+\hat{X}^{(n)}_{k \Delta t}-\hat{L}^{(n)}_{(k-1) \Delta t}(x, b)), 
\end{align}
if $x+\hat{X}^{(n)}_{k \Delta t}-\hat{L}^{(n)}_{(k-1) \Delta t}(x, b)+\hat{R}^{(n)}_{(k-1)\Delta t}(x, b)> b$, then 
\begin{align}
\hat{L}^{(n)}_{k \Delta t}(x, b)=\hat{L}^{(n)}_{(k-1) \Delta t}(x, b)+ \alpha \Delta t, \quad \hat{R}^{(n)}_{k \Delta t}(x, b)=
\hat{R}^{(n)}_{(k-1) \Delta t}(x, b), 
\end{align}
else
\begin{align}
\hat{L}^{(n)}(x, b)_{k \Delta t}=\hat{L}^{(n)}_{(k-1) \Delta t}(x, b), \quad \hat{R}^{(n)}_{k \Delta t}(x, b)=
\hat{R}^{(n)}_{(k-1) \Delta t}(x, b), 
\end{align}
where $\hat{L}^{(n)}_0(x, b)=0$ and $\hat{R}^{(n)}_0(x, b)= 0\lor(-\hat{X}^{(n)}_0) $. 
Thus, we approximate ${\nu}(x)$ by 
\begin{align}
\hat{{\nu}}(b)&= 
\begin{cases}
\frac{1}{N}\sum_{n=1}^{N} \exp\rbra{ -q \Delta t \rbra{\hat{\kappa}^{(n)}_0(b,b) \land 10 K  }}, \quad &x\geq 0,\\
1, \quad &x<0,
\end{cases}
\end{align}
where $\hat{\kappa}^{(n)}_0(x, b) =\min\{ k \in \{0, 1, \cdots , K-1\}: \hat{R}^{(n)}_{k\Delta t}(x, b) >0  \}$ for $b\geq 0 $ and $x\in\bR$ with $\min \emptyset= \infty$. 
In addition, we approximate $v_b (0)$ by
\begin{align}
\check{v}_b(0)&=\frac{1}{N}\sum_{n=1}^{N} \sum_{k=1}^{K-1} 
e^{- qk\Delta t}\bigg{(}\rbra{\hat{L}^{(n)}_{k \Delta t}(0, b) -\hat{L}^{(n)}_{(k-1) \Delta t}(0, b)}\\
&\qquad \qquad \qquad \qquad \qquad \qquad -\beta \rbra{\hat{R}^{(n)}_{k \Delta t}(0, b) -\hat{R}^{(n)}_{(k-1) \Delta t}(0, b)}\bigg{)},
\quad b\geq 0.
\end{align} 
By the strong Markov property at $\kappa^{b, -}_0$, the function $v_b (x)$ with $b\geq 0$ is approximated by 
\begin{align}
\hat{v}_b(x)=
\begin{cases}
-\frac{1}{N}\sum_{n=1}^{N}\beta \hat{R}^{(n)}_{k \Delta t}(x, b) +\check{v}_b (0), \quad& x<0,\\
\begin{aligned}
\frac{1}{N}\sum_{n=1}^{N}\bigg{(} \sum_{k=1}^{(K-1)\land \breve{\kappa}^{(n)}_0(x, b)} 
e^{- qk\Delta t}\bigg{(}\rbra{\hat{L}^{(n)}_{k \Delta t}(x, b) -\hat{L}^{(n)}_{(k-1) \Delta t}(x, b)}\\
~-\beta \rbra{\hat{R}^{(n)}_{k \Delta t}(x, b) -\hat{R}^{(n)}_{(k-1) \Delta t}(x, b)}\bigg{)}
+e^{-q\Delta t \breve{\kappa}^{(n)}_0(x, b)}\check{v}_b (0)\bigg{)}, 
\end{aligned}
\quad  & x\geq 0, 
\end{cases}
\end{align} 
where $\breve{\kappa}^{(n)}_0(x, b)=\min\{k\in\{1,2,\cdots, K-1\}: x+\hat{X}^{(n)}_{k\Delta t} -\hat{L}^{(n)}_{(k-1)\Delta t}\leq0 \}$. 
In the first step, we compute $\hat{{\nu}}(b)$ for $b=-1, -0.99, 0.98, \dots, 3.48, 3.49$ in Case $1$ and for $b=-1, -0.99, 0.98, \dots, 3.98, 3.99$ in Case $2$, and make figures. 
Then, the function ${\nu}$ in Cases $1$ and $2$ is approximated by Figure \ref{Fig01}. 
\begin{figure}[htbp]
  \begin{minipage}{0.5\hsize}
    \begin{center}
      \includegraphics[width=70mm]{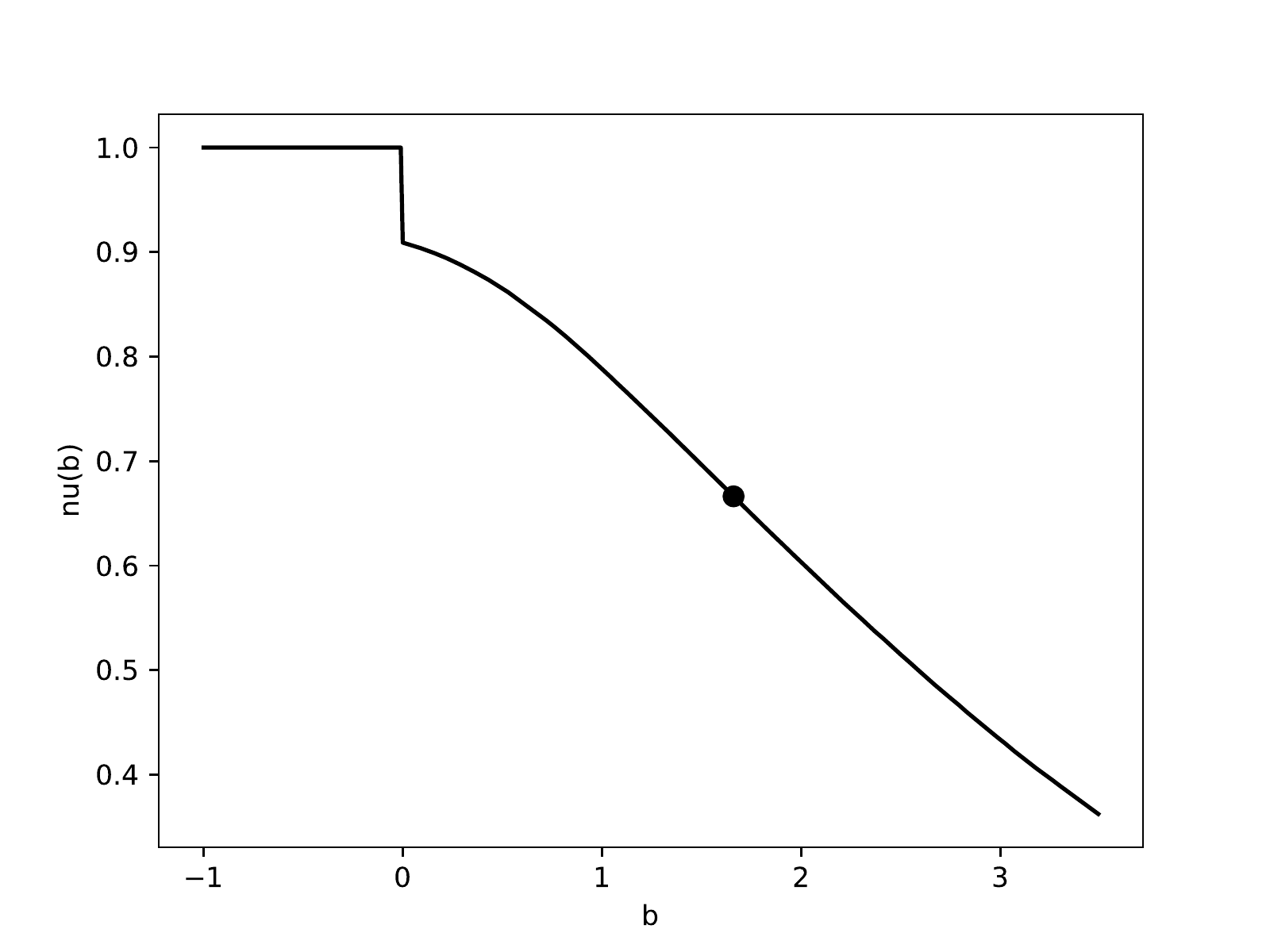}\\
      {\bf{Case 1}}
    \end{center}
    \label{fig:img1}
  \end{minipage}
  \begin{minipage}{0.5\hsize}
    \begin{center}
      \includegraphics[width=70mm]{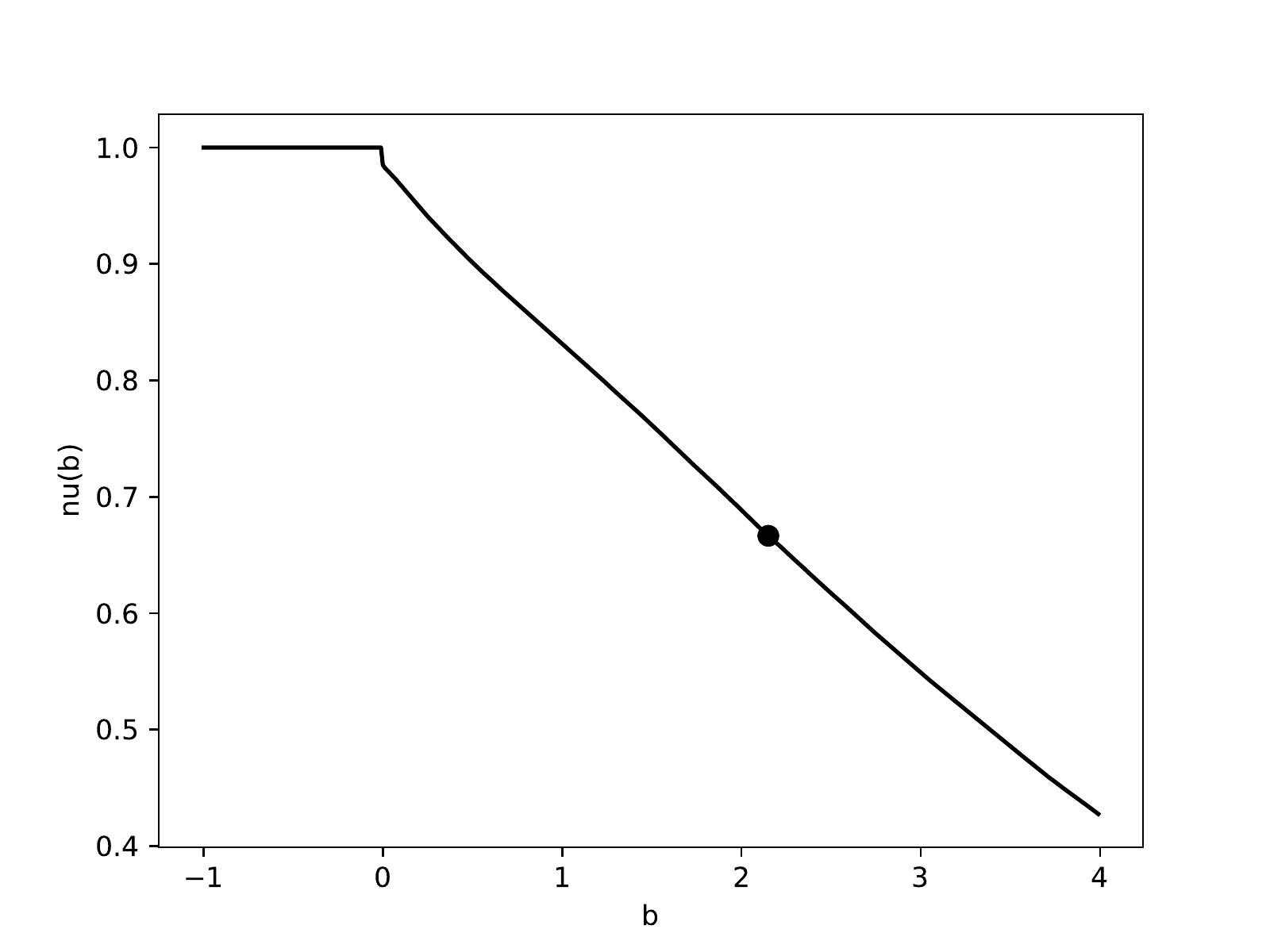}\\
      {\bf{Case 2}}
    \end{center}
    \label{fig:img2} 
  \end{minipage}
  \caption{{Plots of the approximations of $b \mapsto {\nu}(b)$ and $(b^\ast, {\nu}(b^\ast))$.}}\label{Fig01}
\end{figure}
From this computation, $b^\ast$ is approximated by $1.66$ and $2.15$ in Cases $1$ and $2$, respectively. 
In the second step, we compute 
$v_b(x)$ for some $b$ and for $x=-1, -0.99, 0.98, \dots, 3.48, 3.49$ in Case $1$ and $x=-1, -0.99, 0.98, \dots, 3.98, 3.99$ in Case $2$. 
Here, we compute for $b=0.56, 1.11, 1.66 (b^\ast), 2.22, 2.77$ in Case $1$ and for $b=0.72,1.44, 2.15 (b^\ast), 2.87,3.59$ in Case $2$. 
Then the function $v_b$ with $b=\frac{1}{3}b^\ast$, $\frac{2}{3}b^\ast$, $b^\ast$, $\frac{4}{3}b^\ast$, and $\frac{5}{3}b^\ast$ in Cases $1$ and $2$ is approximated by Figure \ref{Fig02}. In these figures, the function $v_{b^\ast}$ is represented by the solid line and the others are represented by the dotted line. 
\begin{figure}[htbp]
  \begin{minipage}{0.5\hsize}
    \begin{center}
      \includegraphics[width=70mm]{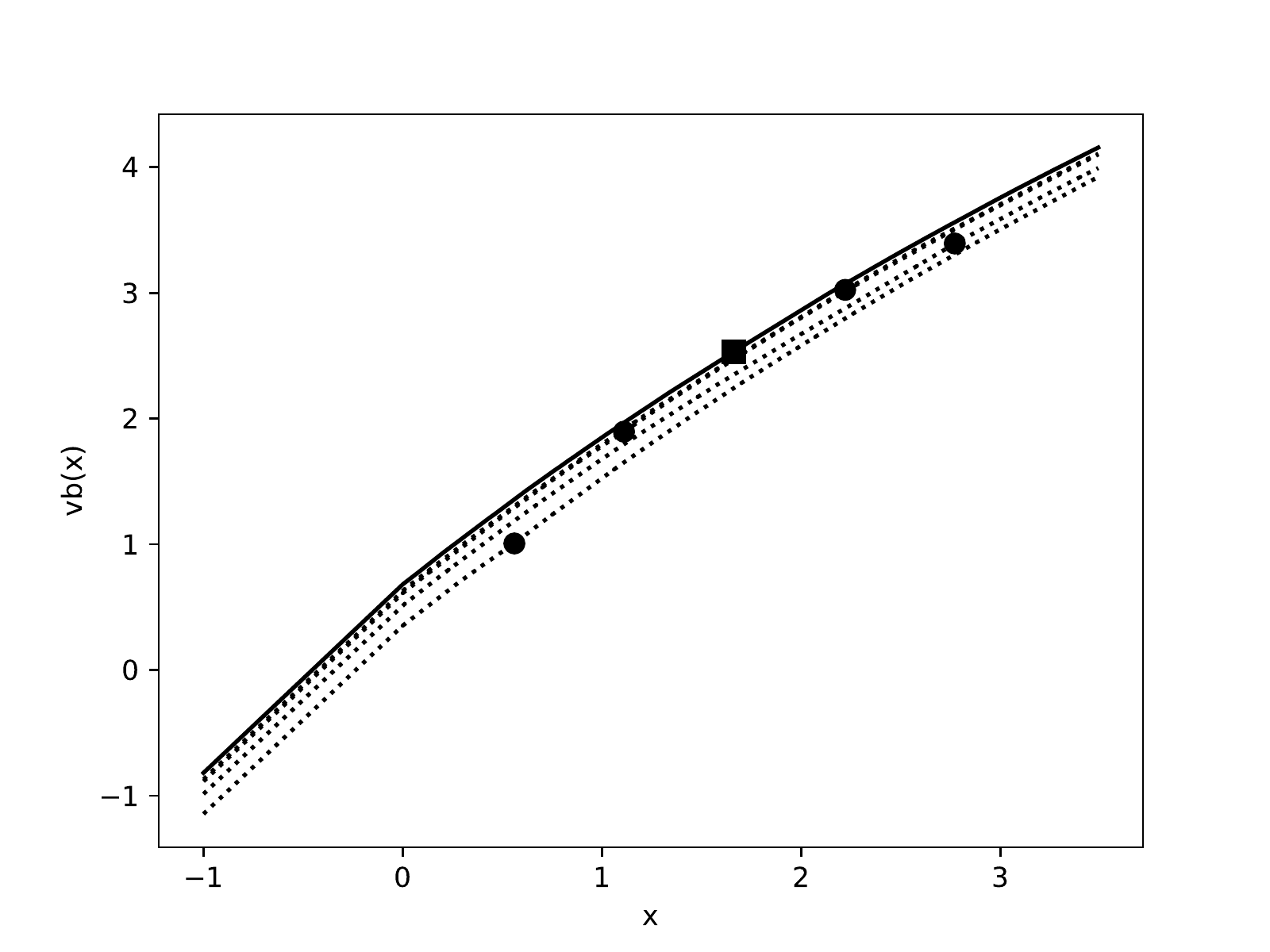}\\
      {\bf{Case 1}}
    \end{center}
    \label{fig:img1}
  \end{minipage}
  \begin{minipage}{0.5\hsize}
    \begin{center}
      \includegraphics[width=70mm]{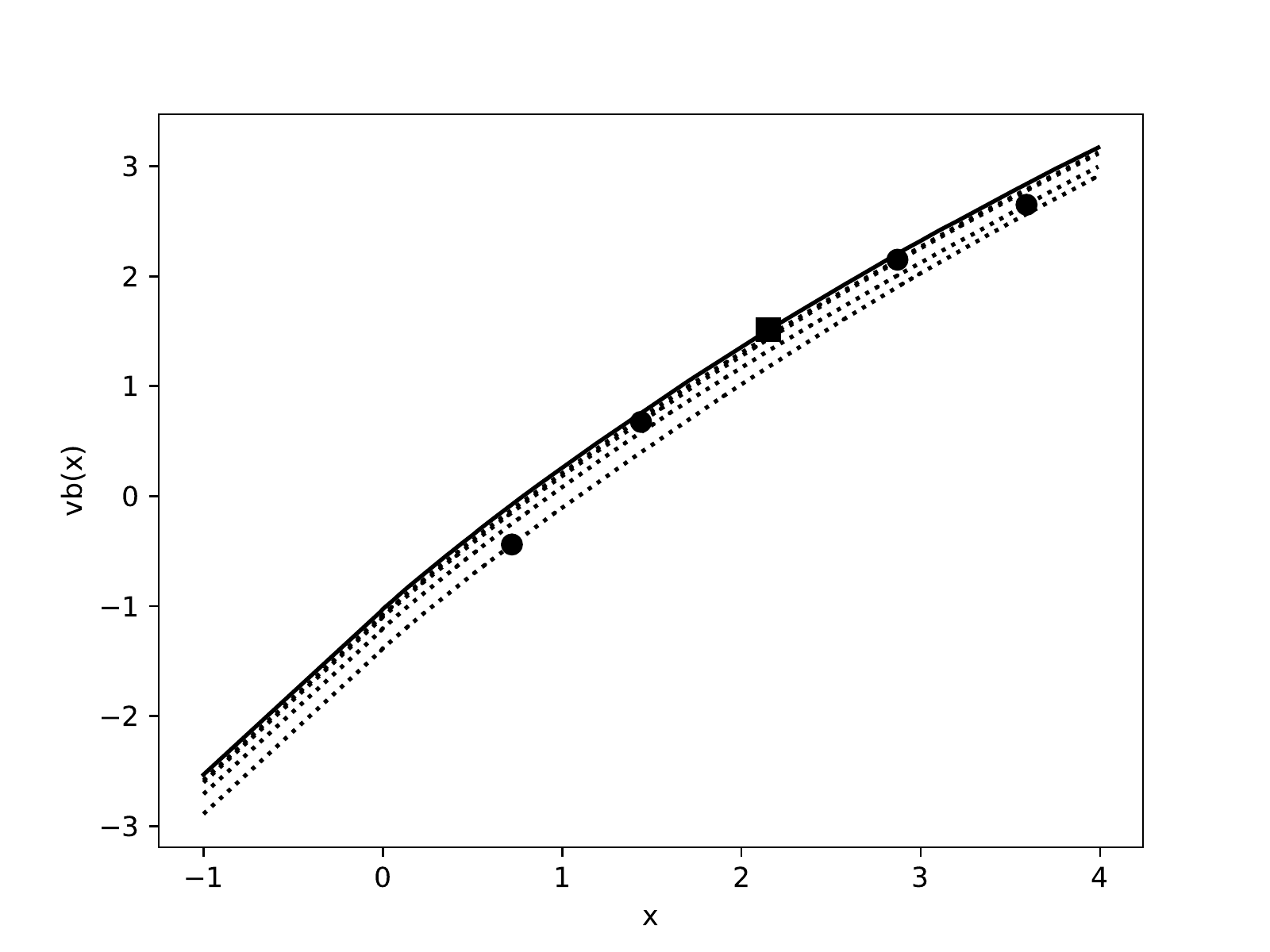}\\
      {\bf{Case 2}}
    \end{center}
    \label{fig:img2} 
  \end{minipage}
  \caption{{Plots of the approximation of $x \mapsto v_{b} (x)$ and $(b, v_{b}(b))$ for $b=\frac{1}{3}b^\ast$, $\frac{2}{3}b^\ast$, $b^\ast$, $\frac{4}{3}b^\ast$, and $\frac{5}{3}b^\ast$.}}\label{Fig02}
\end{figure}
From Figure \ref{Fig02}, we can see by visual inspection that $b^\ast$ is the optimal barrier. 
In particular, we have not confirmed that $X$ in Case $2$ satisfies the assumptions given in this paper, but we can expect that the main result is true for it.

{
\section{Convergence result}\label{Sec007}
In the previous sections we have shown the optimality of refraction--reflection strategies. On the other hand, in \cite{Nob2021}, I have proved the optimality of double barrier strategies under different conditions. In this section, we show that the expected NPV of the dividend payments and capital injections when taking the optimal refraction--reflection strategy converges to that when taking the optimal double barrier strategy under the condition of \cite{Nob2021} by taking the limit of $\alpha$ to $\infty$. 
}
\par
{
In this section, we assume that Assumption \ref{Ass202a} and Assumption \ref{Ass402} hold for all $\alpha>0$. 
In addition, we give \cite[Assumption 2.1]{Nob2021}. 
\begin{Rem}
\cite[Assumption 2.1 (2.4)]{Nob2021} is stronger than Assumption \ref{Ass201}. 
If we only give Assumption \ref{Ass201}, we can predict that the expected NPV of an optimal dividend payments and capital injections may go to infinity by taking the limit of $\alpha$ to $\infty$ since 
the expected NPV of the dividend payments may become infinite when we take double barrier strategies (see, \cite[Remark 3.4]{Nob2021}). 
However, since I only considered cases with \cite[Assumption 2.1 (2.4)]{Nob2021} in \cite{Nob2021}, we give it in this section for simplicity.
\end{Rem}
We write $Y^{\alpha, b}:=\{Y^{\alpha, b}_t: t\geq 0\}$ for refracted L\'evy process at $b\geq 0$ with $\alpha>0$. 
Similarly, we write $Y^{\infty, b}:=\{Y^{\infty, b}_t: t\geq 0\}$ for reflected L\'evy process at $b\geq 0$, i.e. $Y^{\infty, b}_t= X_t - \sup_{s\in[ 0, t] } (X_s -b )\lor 0$ for $t\geq 0$. 
In addition, we denote, for $\alpha>0$, $b\in\bR$ and $x\in\bR$, 
\begin{align}
\kappa^{\alpha, b}_x:=\inf \{t> 0 : Y^{\alpha, b}_t < x\},&\qquad 
\kappa^{\infty, b}_x:=\inf \{t> 0 : Y^{\infty, b}_t < x\}, \\
\nu_{b}^\alpha (x) := \bE_x \sbra{e^{-q\kappa^{\alpha, b }_0 }} ,&
\qquad 
\nu_{b}^\infty (x) := \bE_x \sbra{e^{-q\kappa^{\infty, b }_0 }} , \\
b^{\alpha, \ast}:= \inf \cbra{ b>0 :  \beta\nu_{b}^\alpha (b) \leq 1}, &
\qquad 
b^{\infty, \ast}:= \inf \cbra{ b>0 :  \beta\nu_{b}^\infty (b) \leq 1}.
\end{align}
For fixed $\alpha>0$, an optimal strategy of the problem in Section \ref{SubSec301} is the refraction--reflection strategy $\pi^{\alpha, b^{\alpha, \ast}}$ at $b^{\alpha, \ast}$ by Remark \ref{Rem403} and Theorem \ref{Thm502}.  
Similarly, an optimal strategy of the problem in \cite[Section 2.2]{Nob2021} is the doubly barrier strategy $\pi^{\infty, b^{\infty, \ast}}$ at $b^{\infty, \ast}$ by \cite[Theorem 5.1]{Nob2021}. 
We write the expected NPVs of the dividend payments and capital injections as $v^\alpha_b$ and $v^\infty_b$ when using the strategy $\pi^{\alpha, b}$ and $\pi^{\infty, b}$ with $b\in[0, \infty)$, respectively. 
The purpose of this chapter is to prove that $ b^{\alpha, \ast}$ and $v^\alpha_{b^{\alpha, \ast}}(x)$ converge $ b^{\infty, \ast}$ and $v^\infty_{b^{\infty, \ast}} (x)$ for $x \in\bR$ as $\alpha\uparrow \infty$, respectively.}
{
\begin{Lem}\label{Lem702}
For $x, b\in\bR$, we have 
\begin{align}
Y^{\alpha, b}_t \downarrow Y^{\infty, b}_t , \quad \text{as} \quad \alpha \uparrow \infty ,\label{Rev006}
\end{align}
for all $t\geq 0$ such that $X$ does not have negative jumps at $t$, $\bP_x$-a.s.. 
\end{Lem}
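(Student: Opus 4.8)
The plan is to prove \eqref{Rev006} pathwise by a monotonicity--and--squeeze argument. Fix $\omega$ in the almost sure event on which $t\mapsto X_t$ is c\`adl\`ag and \eqref{4} (with parameter $\alpha$) has a unique solution for every $\alpha>0$; all statements below are for this $\omega$. For $\alpha>0$ put $A^\alpha_t:=X_t-Y^{\alpha,b}_t$, the cumulative drift of $Y^{\alpha,b}$, so that $A^\alpha$ is continuous, non-decreasing, $A^\alpha_0=0$, and in both cases the rate of increase of $A^\alpha$ dominates $\alpha1_{(b,\infty)}(Y^{\alpha,b}_\cdot)$ (in Case~$2$ there is the extra $\delta1_{\{b\}}$, with $\delta\geq0$). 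Put also $A^\infty_t:=\sup_{s\in[0,t]}(X_s-b)\lor0$, so $Y^{\infty,b}_t=X_t-A^\infty_t$. I will establish: (a) $\alpha\mapsto Y^{\alpha,b}_t$ is non-increasing for each $t$; (b) $Y^{\alpha,b}_t\geq Y^{\infty,b}_t$ for all $\alpha>0$, $t\geq0$; from (a)--(b) the decreasing limit $\bar Y_t:=\lim_{\alpha\uparrow\infty}Y^{\alpha,b}_t$ exists with $\bar Y_t\geq Y^{\infty,b}_t$; and finally (c) $\bar Y_t\leq Y^{\infty,b}_t$ for the $t$ allowed by the hypothesis.

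For (a), fix $0<\alpha_1<\alpha_2$ and suppose $t_0:=\inf\{t:Y^{\alpha_1,b}_t<Y^{\alpha_2,b}_t\}<\infty$. Since $A^{\alpha_1},A^{\alpha_2}$ are continuous the two solutions jump by the same amount at any jump of $X$, so $t_0$ is not a jump time of $X$ and $Y^{\alpha_1,b}_{t_0}=Y^{\alpha_2,b}_{t_0}$ by continuity of the difference there; a short analysis of the two ODEs just to the right of $t_0$, using that the subtracted drift is non-decreasing in the state and non-decreasing in $\alpha$, shows $Y^{\alpha_1,b}$ cannot strictly fall below $Y^{\alpha_2,b}$ just after $t_0$, contradicting the choice of $t_0$; this is the comparison of \cite[Theorem~1]{KypLoe2010}, and Case~$2$ is identical. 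For (b), suppose $Y^{\alpha,b}_{t_1}<Y^{\infty,b}_{t_1}$ and set $t_0:=\sup\{t<t_1:Y^{\alpha,b}_t\geq Y^{\infty,b}_t\}$ (the set contains $0$). On $(t_0,t_1]$ one has $Y^{\alpha,b}<Y^{\infty,b}\leq b$, so $A^\alpha$ is flat on $(t_0,t_1]$ while $A^\infty$ is non-decreasing, whence $A^\infty_{t_1}-A^\alpha_{t_1}\geq A^\infty_{t_0}-A^\alpha_{t_0}$. Moreover $Y^{\alpha,b}_{t_0}\geq Y^{\infty,b}_{t_0}$: this is clear from the left limit if $X$ is continuous at $t_0$, and if $X$ jumps at $t_0$ then $\Delta Y^{\alpha,b}_{t_0}=\Delta X_{t_0}\geq\Delta X_{t_0}-\Delta A^\infty_{t_0}=\Delta Y^{\infty,b}_{t_0}$, so the inequality survives the jump. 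Hence $A^\infty_{t_0}-A^\alpha_{t_0}=Y^{\alpha,b}_{t_0}-Y^{\infty,b}_{t_0}\geq0$, giving $Y^{\alpha,b}_{t_1}\geq Y^{\infty,b}_{t_1}$, a contradiction.

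For (c), put $\bar A_t:=X_t-\bar Y_t=\lim_{\alpha\uparrow\infty}A^\alpha_t$, an increasing limit (by (a)) of continuous non-decreasing functions, hence non-decreasing, lower semicontinuous and therefore left-continuous, with $\bar A\leq A^\infty$ by (b). Since the rate of increase of $A^\alpha$ dominates $\alpha1_{(b,\infty)}(Y^{\alpha,b}_\cdot)$ and $A^\alpha_t\leq A^\infty_t$, we get $|\{s\leq t:Y^{\alpha,b}_s>b\}|\leq A^\infty_t/\alpha\to0$; as $Y^{\alpha,b}_s\downarrow\bar Y_s$, we have $\{\bar Y_s>b\}\subseteq\{Y^{\alpha,b}_s>b\}$ for every $\alpha$, so $|\{s\leq t:\bar Y_s>b\}|=0$, i.e. $\bar Y_s\leq b$ for a.e.\ $s$. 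Choosing, for each $s<t$, times $s_n\downarrow s$ with $\bar Y_{s_n}\leq b$ and using right-continuity of $X$ gives $\bar A_{s+}\geq(X_s-b)\lor0$, and monotonicity of $\bar A$ then yields $\bar A_{t-}\geq\sup_{s<t}(X_s-b)\lor0=A^\infty_{t-}$; with $\bar A\leq A^\infty$ this forces $\bar A_{t-}=A^\infty_{t-}$, hence $\bar Y_{t-}=Y^{\infty,b}_{t-}$. Finally, since $A^\alpha$ is continuous at $t$, $\bar Y_t=\bar Y_{t-}+\Delta X_t$; under the hypothesis on the jump of $X$ at $t$, which rules out the one obstruction to $A^\infty_t=A^\infty_{t-}$, one gets $\bar Y_t=Y^{\infty,b}_{t-}+\Delta X_t=Y^{\infty,b}_t$. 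I expect the main obstacle to be exactly this last passage from $t-$ to $t$: the limit $\bar Y$ is built from the continuous drifts $A^\alpha$ and is only left-continuous, so at a jump time of $X$ the identification $\bar Y_t=Y^{\infty,b}_t$ can fail --- at a jump realising a new running supremum of $X_\cdot-b$, no finite $\alpha$ can absorb the overshoot instantaneously, so $\bar Y_t$ stays strictly above $Y^{\infty,b}_t=b$; it is precisely to exclude this that an assumption on $\Delta X_t$ is needed, and the remaining bookkeeping (including Case~$2$) is routine.
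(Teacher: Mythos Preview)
Your argument is correct. Parts (a) and (b) coincide with the paper's part (i): both establish $Y^{\alpha_1,b}_t\geq Y^{\alpha_2,b}_t$ for $\alpha_1<\alpha_2\leq\infty$ via the continuity of the difference and the monotonicity of $h^{\alpha,b}$ in both the state and in $\alpha$.

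The convergence step (c) is where you diverge from the paper. The paper's part (ii) argues directly on the paths: it first shows $\lim_{\alpha\uparrow\infty}Y^{\alpha,b}_{t}\leq b$ for $t>\tau^+_b$ (using that, on any interval where the process stays above $b$, the drift $-\alpha$ forces it down in time $O(1/\alpha)$), and then, when $Y^{\infty,b}_t<b$, traces back to the last time $r(t)$ at which $Y^{\infty,b}$ was at $b$ and compares increments of $X$ on $(r(t),t]$. Your route is more structural: you pass to the cumulative drifts $A^\alpha$, observe that their increasing limit $\bar A$ is left-continuous, use the bound $|\{s\leq t:Y^{\alpha,b}_s>b\}|\leq A^\infty_t/\alpha$ to force $\bar Y\leq b$ a.e., and then recover $\bar A_{t-}=A^\infty_{t-}$ from the running-supremum characterisation of $A^\infty$. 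This is essentially a Skorokhod-reflection uniqueness argument and is arguably cleaner; the paper's argument is more elementary and avoids any measure-theoretic subtleties about the limit $\bar A$.

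One remark worth recording: both your proof and the paper's own proof actually establish the convergence at every $t$ where $X$ has no \emph{positive} jump (this is exactly the obstruction you identify, namely a jump creating a new running supremum of $X-b$, which $A^\infty$ absorbs instantaneously but no $A^\alpha$ can). The word ``negative'' in the statement appears to be a slip; it is harmless for the applications in Section~\ref{Sec007}, where only convergence for a.e.\ $t$ is used.
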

From Lemma \ref{Lem702}, the convergence \eqref{Rev006} is true for a.e. $t\geq 0$, $\bP_x$-a.s.. 
We give the proof of Lemma \ref{Lem702} in Appendix \ref{Ape00H1}. 
}
\par
{
Using Lemma \ref{Lem702}, we can get the following corollary. 
The proof is easy, so we omit it. 
\begin{Cor}
For $x ,b \in\bR$, we have $\nu_{b}^\alpha (x) \uparrow \nu_{b}^\infty (x) $ as $\alpha\uparrow\infty$. 
In addition, we have $b^{\alpha, \ast}\uparrow b^{\infty, \ast}$ as $\alpha \uparrow\infty$. 
\end{Cor}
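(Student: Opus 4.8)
The plan is to derive both assertions from Lemma~\ref{Lem702} by elementary monotone-limit arguments, with no new pathwise estimates. For the first assertion I would dispose of the case $x<0$ at once (there $\nu^\alpha_b(x)=\nu^\infty_b(x)=1$) and treat $x\ge0$ as follows. By Lemma~\ref{Lem702} and the remark after it, $\bP_x$-a.s.\ the paths satisfy $Y^{\alpha,b}_t\ge Y^{\alpha',b}_t\ge Y^{\infty,b}_t$ for $\alpha\le\alpha'$ and for a.e.\ $t$, with $Y^{\alpha,b}_t\downarrow Y^{\infty,b}_t$; by right-continuity these inequalities extend to every $t$. Hence $\alpha\mapsto\kappa^{\alpha,b}_0$ is non-increasing with $\kappa^{\alpha,b}_0\ge\kappa^{\infty,b}_0$, so $\kappa^{\ast}:=\lim_{\alpha\uparrow\infty}\kappa^{\alpha,b}_0$ exists $\bP_x$-a.s.\ and $\kappa^\ast\ge\kappa^{\infty,b}_0$. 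I would then show $\kappa^\ast=\kappa^{\infty,b}_0$ $\bP_x$-a.s.: this is trivial on $\{\kappa^{\infty,b}_0=\infty\}$, and on $\{\kappa^{\infty,b}_0<\infty\}$, given $t>\kappa^{\infty,b}_0$ there is $s<t$ with $Y^{\infty,b}_s<0$, so by right-continuity $Y^{\infty,b}<0$ on a subinterval of $(s,t)$ of positive Lebesgue measure, which must contain a time $u$ at which $Y^{\alpha,b}_u\downarrow Y^{\infty,b}_u<0$; for all large $\alpha$ this forces $\kappa^{\alpha,b}_0\le u<t$, so $\kappa^\ast\le t$, and letting $t\downarrow\kappa^{\infty,b}_0$ gives $\kappa^\ast\le\kappa^{\infty,b}_0$. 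Then $e^{-q\kappa^{\alpha,b}_0}\uparrow e^{-q\kappa^{\infty,b}_0}$ $\bP_x$-a.s.\ (with $e^{-\infty}=0$), and since these are bounded by $1$, monotone convergence yields $\nu^\alpha_b(x)\uparrow\nu^\infty_b(x)$.

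For the second assertion I would first record that $b\mapsto\nu^\alpha_b(b)$ and $b\mapsto\nu^\infty_b(b)$ are non-increasing: by spatial homogeneity of $X$, the shifted process $\{Y^{\alpha,b}_t+(b'-b):t\ge0\}$ started from $b$ has the law of $Y^{\alpha,b'}$ started from $b'$, so for $b'>b$ the time $\kappa^{\alpha,b'}_0$ has, under $\bP_b$, the law of $\inf\{t>0:Y^{\alpha,b}_t<-(b'-b)\}\ge\kappa^{\alpha,b}_0$, whence $\nu^\alpha_{b'}(b')\le\nu^\alpha_b(b)$ (and the same for the reflected process). Write $g^\alpha(b):=\beta\nu^\alpha_b(b)$ and $g^\infty(b):=\beta\nu^\infty_b(b)$; by the first part $g^\alpha\uparrow g^\infty$ pointwise, and both functions are non-increasing. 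Since $g^\alpha\le g^\infty$ we get $\{g^\infty\le1\}\subseteq\{g^\alpha\le1\}$, so $b^{\alpha,\ast}\le b^{\infty,\ast}$; and since $\alpha\mapsto g^\alpha$ is non-decreasing the sets $\{g^\alpha\le1\}$ shrink in $\alpha$, so $b^{\alpha,\ast}$ is non-decreasing and $b_0:=\lim_{\alpha\uparrow\infty}b^{\alpha,\ast}\le b^{\infty,\ast}$ exists. To get the reverse, I would suppose $b_0<b^{\infty,\ast}$ and pick $b\in(b_0,b^{\infty,\ast})$; then $g^\infty(b)>1$, but for every $\alpha$ we have $b>b_0\ge b^{\alpha,\ast}=\inf\{c>0:g^\alpha(c)\le1\}$, so there is $c<b$ with $g^\alpha(c)\le1$, and monotonicity of $g^\alpha$ gives $g^\alpha(b)\le1$; letting $\alpha\uparrow\infty$ yields $g^\infty(b)\le1$, a contradiction. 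Hence $b_0=b^{\infty,\ast}$.

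The only genuinely non-routine step is the identification $\kappa^{\ast}=\kappa^{\infty,b}_0$, i.e.\ promoting the a.e.-in-$t$ monotone convergence of paths supplied by Lemma~\ref{Lem702} to convergence of the hitting-time functionals; this is where I use right-continuity of $Y^{\infty,b}$ together with the fact that an interval on which $Y^{\infty,b}<0$ has positive Lebesgue measure and hence meets the full-measure set of ``good'' times. Everything else is monotone convergence and bookkeeping with infima of non-increasing functions. I would also note that finiteness of $b^{\alpha,\ast}$ and $b^{\infty,\ast}$, used tacitly, follows just as in the remarks of Section~\ref{bast}, since $\nu^\alpha_b(b)\to0$ and $\nu^\infty_b(b)\to0$ as $b\uparrow\infty$.
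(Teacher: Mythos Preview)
Your proof is correct and follows precisely the route the paper intends: the paper omits the proof, stating only that the corollary follows from Lemma~\ref{Lem702} and is ``easy,'' and your argument is the natural way to fill in those details. The one step you flag as non-routine---promoting the a.e.-in-$t$ convergence of $Y^{\alpha,b}$ to convergence of the hitting times $\kappa^{\alpha,b}_0$ via right-continuity of $Y^{\infty,b}$ and the positive-measure interval trick---is exactly what is needed and is carried out correctly; the monotonicity of $b\mapsto\nu^\alpha_b(b)$ via spatial homogeneity and the subsequent infimum bookkeeping for $b^{\alpha,\ast}$ are also right.
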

For fixed $\alpha$, we write $Z^{\alpha, b}=\{Z^{\alpha, b}_t : t\geq 0\}$ for refracted--reflected L\'evy process at $b\geq 0$. 
In addition, we write $L^{\alpha, b}=\{L^{\alpha, b}_t : t\geq 0\}$ and $R^{\alpha, b}=\{R^{\alpha, b}_t : t\geq 0\}$ for the cumulate amount of dividend payments and capital injections when we apply refraction--reflection strategy $\pi^{\alpha, b}$ at $b\geq 0$. 
We represent the symbols about doubly barrier strategies by substituting $\infty$ for $\alpha$ in the symbols above.}
\par
{
The following lemma is necessary to prove Theorem \ref{Thm705}.
\begin{Lem}\label{Lem705}
For $b> 0$, we have 
\begin{align}
Z^{\alpha, b}_t \downarrow Z^{\infty, b}_t , \quad L^{\alpha, b}_t \uparrow L^{\infty, b}_t, \quad R^{\alpha, b}_t \uparrow R^{\infty, b}_t, \quad \text{as} \quad \alpha \uparrow \infty ,\label{Rev019}
\end{align}
for all $t\geq 0$ such that $X$ does not have negative jumps at $t$, $\bP_x$-a.s.. 
In addition, the above is also true when $b=0$ and $X$ has bounded variation paths. 
\end{Lem}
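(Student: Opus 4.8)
The plan is to prove the three convergences in \eqref{Rev019} by a monotone--sandwich argument: first establish the pathwise monotonicity in $\alpha$ of the triple $(Z^{\alpha,b},L^{\alpha,b},R^{\alpha,b})$ together with two-sided bounds coming from the ``$\alpha=\infty$'' (double barrier) regime, then pass to the monotone limits, and finally identify those limits with the double-barrier objects using Lemma \ref{Lem702}, the continuity of hitting times from Appendix \ref{Sec0D}, and the characterization of the doubly reflected process underlying \cite[Theorem 5.1]{Nob2021}.

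First I would show that for $0<\alpha_1\le\alpha_2\le\infty$ (reading $\alpha=\infty$ as reflection at $b$) one has, $\bP_x$-a.s.\ for all $t\ge0$, $Z^{\alpha_2,b}_t\le Z^{\alpha_1,b}_t$, $L^{\alpha_2,b}_t\ge L^{\alpha_1,b}_t$ and $R^{\alpha_2,b}_t\ge R^{\alpha_1,b}_t$, by induction along the iterative construction (Steps $0$--$2$) of the refracted--reflected processes. The inputs are: $h^b$ is non-decreasing in the state $y$ \emph{and} non-decreasing in $\alpha$, so on a Step~$1$ stretch the comparison argument for the refracted SDEs used in the proof of Lemma \ref{Lem702} (Appendix \ref{Ape00H1}) applies and keeps the paths ordered while keeping the $L$-increments ordered; the Skorokhod reflection map at $0$ used in Step~$2$ is monotone, so the ordering survives each reflection; and, since a lower path reaches $0$ sooner and a higher path reaches $b$ sooner, the regeneration times $\un{T}^{(n)}$ and $\bar T^{(n)}$ are themselves monotone in $\alpha$, which is what makes the induction close. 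Monotonicity of $R^{\alpha,b}$ in $\alpha$ then also follows from that of $L^{\alpha,b}$ through the Skorokhod formula $R^{\alpha,b}_t=\sup_{s\in[0,t]}(L^{\alpha,b}_s-X_s)^{+}$ implied by \eqref{38}.

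Given this, $Z^{\alpha,b}_t$ is non-increasing in $\alpha$ and bounded below by $Z^{\infty,b}_t\ge0$, while $L^{\alpha,b}_t$ and $R^{\alpha,b}_t$ are non-decreasing in $\alpha$ and bounded above by $L^{\infty,b}_t$ and $R^{\infty,b}_t$, which are finite $\bP_x$-a.s.\ (finiteness of $R^{\infty,b}$ as in the admissibility argument for the double barrier strategy in \cite{Nob2021} under \cite[Assumption 2.1]{Nob2021}; finiteness of $L^{\infty,b}_t$ on bounded intervals since it is a reflection term plus jump overshoots). Hence the limits $\bar Z_t:=\lim_{\alpha\uparrow\infty}Z^{\alpha,b}_t$, $\bar L_t:=\lim_{\alpha\uparrow\infty}L^{\alpha,b}_t$, $\bar R_t:=\lim_{\alpha\uparrow\infty}R^{\alpha,b}_t$ exist for every $t$, with $\bar Z_t\ge Z^{\infty,b}_t$, $\bar L_t\le L^{\infty,b}_t$, $\bar R_t\le R^{\infty,b}_t$. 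To upgrade these to equalities for a.e.\ $t$, I would argue stretch by stretch: on a Step~$1$ stretch $Z^{\alpha,b}$ is a restart of a refracted process and $Z^{\infty,b}$ the corresponding restart of the process reflected at $b$, so Lemma \ref{Lem702} applied on that stretch gives $\bar Z_t=Z^{\infty,b}_t$ for a.e.\ $t$ there; on a Step~$2$ stretch both are the same reflection of $X$ at $0$ once the restart times agree, and those restart times converge to the ones for $Z^{\infty,b}$ by the monotonicity above combined with continuity of hitting times (Appendix \ref{Sec0D}), while the standard fact that only finitely many excursions reach height $b$ on any bounded interval rules out accumulation of stages. This gives $\bar Z_t=Z^{\infty,b}_t$ a.e.; then $\bar L_t-\bar R_t=X_t-\bar Z_t=X_t-Z^{\infty,b}_t=L^{\infty,b}_t-R^{\infty,b}_t$ a.e., and together with $\bar L_t\le L^{\infty,b}_t$, $\bar R_t\le R^{\infty,b}_t$ this forces $\bar L_t=L^{\infty,b}_t$ and $\bar R_t=R^{\infty,b}_t$ a.e.\ $t$. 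The case $b=0$ with $X$ of bounded variation is handled in the same way, with the Step~$1$ refracted stretches replaced by their (possibly sticky) bounded-variation counterparts.

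The main obstacle I expect is the first step together with the restart bookkeeping in the identification: running the pathwise comparison cleanly through the iterative construction when the regeneration times $\un{T}^{(n)},\bar T^{(n)}$ move with $\alpha$ and may be approached through overshooting jumps of $X$, and treating the reflection-at-$b$ regime $\alpha=\infty$ as a genuine limiting case within the same comparison; once these are in place the rest is monotone convergence plus the already-available continuity of hitting times and uniqueness of the doubly reflected process.
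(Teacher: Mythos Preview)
Your overall architecture---first prove pathwise monotonicity in $\alpha$ of the triple $(Z^{\alpha,b},L^{\alpha,b},R^{\alpha,b})$, then pass to the monotone limit and identify it with the double-barrier triple---is exactly what the paper does. The Skorokhod observation $R^{\alpha,b}_t=\sup_{s\le t}(L^{\alpha,b}_s-X_s)^{+}$ reducing monotonicity of $R$ to that of $L$ is also correct and useful. The substantive difference, and the place where your plan has a real gap, is the monotonicity step itself.

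You propose to induct along the Step~1/Step~2 regeneration times of \emph{each} process and argue that these times are monotone in $\alpha$, ``which is what makes the induction close''. This does not close. Once $Z^{\alpha_2,b}$ enters Step~2 at $\un{T}^{(1)}(\alpha_2)$ while $Z^{\alpha_1,b}$ is still in Step~1, you must compare a reflected-at-$0$ path against a refracted-at-$b$ path on an interval that is governed by neither comparison lemma you invoke; moreover $Z^{\alpha_2,b}$ can reach $b$ and re-enter Step~1 before $Z^{\alpha_1,b}$ ever reaches $0$, so the regeneration times of the two processes do not interleave and no simple monotonicity of the $\bar T^{(n)},\un T^{(n)}$ salvages the induction. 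Relatedly, on an interval with $Z^{\alpha_1,b}_s>b\ge Z^{\alpha_2,b}_s$ one has $dL^{\alpha_1,b}_s=\alpha_1>0=dL^{\alpha_2,b}_s$, so $L^{\alpha_1,b}\le L^{\alpha_2,b}$ is not a pointwise-in-time inequality of rates and cannot be obtained by the local arguments you sketch.

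The paper avoids this entirely with a device you do not use: it inducts along the regeneration times $\bar J^{[n]}_b,\un J^{[n]}_0$ of a \emph{single} process (the one with the larger parameter $\alpha_2$). On $[\un J^{[n-1]}_0,\bar J^{[n]}_b)$ the $\alpha_2$-path stays in $[0,b)$, hence pays no dividend, and can therefore be read as a refracted--reflected process \emph{with parameter $\alpha_1$}. This turns the comparison on that stretch into a comparison of two refracted--reflected paths with the \emph{same} $\alpha$ but different starting heights, which is precisely Lemma~\ref{Lem_paths} (and Remark~\ref{RemD02}). On $[\bar J^{[n]}_b,\un J^{[n]}_0)$ neither path touches $0$, so one is comparing two refracted paths with different $\alpha$ and can quote the argument behind Lemma~\ref{Lem702}; a sandwich through an auxiliary $\hat Z^{\alpha_1,b}$ started at $Z^{\alpha_2,b}_{\bar J^{[n]}_b}$ gives $Z^{\alpha_1,b}\ge \hat Z^{\alpha_1,b}\ge Z^{\alpha_2,b}$. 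The identification step is handled the same way, inducting along the regeneration times of $Z^{\infty,b}$, so no ``convergence of restart times'' is needed. In particular, Appendix~\ref{Sec0D} is not a general continuity-of-hitting-times lemma (it concerns only continuity of $\un\nu$ at $b^\ast$) and is not used here; you should drop that reference. For $b=0$ with bounded variation, the paper argues directly from \eqref{36} and the representation of the running infimum in Lemma~\ref{LemB01}/Remark~\ref{RemA03}, which is simpler than recycling the Step~1/Step~2 machinery.
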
}
{
We give the proof of Lemma \ref{Lem705} in Appendix \ref{Ape00H2}. 
}
{
\begin{Thm}\label{Thm705}
For $x\in\bR$, we have 
\begin{align}
v^{\alpha}(x) \uparrow v^{\infty} (x),\quad \text{as}\quad \alpha \uparrow \infty. \label{Rev016}
\end{align}
This convergence is uniformly in $x$ on any compact set. 
\end{Thm}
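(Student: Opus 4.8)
The plan is to bracket $v^{\alpha}(x)=v^{\alpha}_{b^{\alpha,\ast}}(x)$ between $v^{\infty}(x)=v^{\infty}_{b^{\infty,\ast}}(x)$ from above and a family of lower bounds converging upward to $v^{\infty}(x)$. First I would record the upper bound together with monotonicity in $\alpha$: for $0<\alpha_{1}\leq\alpha_{2}$ the class of strategies admissible for the $\alpha_{1}$-problem is contained in that for the $\alpha_{2}$-problem (the pointwise constraint $l^{\pi}_{t}\in[0,\alpha]$ only weakens), and every strategy admissible for any of the bounded-density problems is, in particular, admissible for the singular control problem of \cite{Nob2021}; since the objective functional is the same in all these problems, Theorem \ref{Thm502} and \cite[Theorem 5.1]{Nob2021} give $v^{\alpha_{1}}(x)\leq v^{\alpha_{2}}(x)\leq v^{\infty}(x)$. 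Consequently $v^{\alpha}(x)$ increases, as $\alpha\uparrow\infty$, to some limit $\ell(x)\leq v^{\infty}(x)$, and it remains to prove $\ell(x)\geq v^{\infty}(x)$.

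The core step is to show that for each fixed $b>0$ --- and for $b=0$ when $X$ has bounded variation paths --- we have $v^{\alpha}_{b}(x)\to v^{\infty}_{b}(x)$ as $\alpha\uparrow\infty$. By Lemma \ref{Lem705}, $\bP_{x}$-almost surely and for a.e.\ $t\geq 0$ we have $L^{\alpha,b}_{t}\uparrow L^{\infty,b}_{t}$ and $R^{\alpha,b}_{t}\uparrow R^{\infty,b}_{t}$ as $\alpha\uparrow\infty$. Since these convergences hold only for a.e.\ $t$ --- outside the at most countably many negative-jump times of $X$ --- one cannot pass to the limit directly inside the Stieltjes integrals defining $v^{\alpha}_{b}$; instead I would rewrite each such integral via the identity $\int_{[0,\infty)}e^{-qt}\,dG_{t}=q\int_{0}^{\infty}e^{-qt}G_{t}\,dt$, valid for any right-continuous non-decreasing $G$ with $G_{0-}=0$, and then apply the monotone convergence theorem, first in the variable $t$ and then in $\alpha$. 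The interchange of $\bE_{x}$ with the limit is justified by the domination $0\leq\int_{0}^{\infty}e^{-qt}L^{\alpha,b}_{t}\,dt\leq\int_{0}^{\infty}e^{-qt}L^{\infty,b}_{t}\,dt$, whose expectation is finite precisely because \cite[Assumption 2.1]{Nob2021} (strictly stronger than Assumption \ref{Ass201}) is in force in this section, together with the analogous bound for $R^{\alpha,b}$ and the finiteness of $\bE_{x}\sbra{\int_{[0,\infty)}e^{-qt}\,dR^{\infty,b}_{t}}$ guaranteed by condition \eqref{90} for $\pi^{\infty,b}$. This yields convergence of both expected NPVs and hence of $v^{\alpha}_{b}(x)$ to $v^{\infty}_{b}(x)$.

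To close, I would use the optimality of $b^{\alpha,\ast}$ for the $\alpha$-problem (Theorem \ref{Thm502} and Remark \ref{Rem403}): since $\pi^{\alpha,b}\in\cA$ for all $b\geq 0$, we have $v^{\alpha}(x)=v^{\alpha}_{b^{\alpha,\ast}}(x)\geq v^{\alpha}_{b^{\infty,\ast}}(x)$ for every $\alpha$. Provided $b^{\infty,\ast}>0$ --- which holds whenever $X$ has unbounded variation paths by an argument analogous to Remark \ref{Rem401}, while $b^{\infty,\ast}=0$ forces $X$ to have bounded variation paths and is covered by the core step --- the core step applied with $b=b^{\infty,\ast}$ gives $v^{\alpha}_{b^{\infty,\ast}}(x)\to v^{\infty}_{b^{\infty,\ast}}(x)=v^{\infty}(x)$, whence $\ell(x)\geq v^{\infty}(x)$. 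Together with the first paragraph this gives $v^{\alpha}(x)\uparrow v^{\infty}(x)$.

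For the uniform statement, observe that each $v^{\alpha}$ is concave on $\bR$ by Lemmas \ref{Lem302} and \ref{Lem405}, hence continuous, and $v^{\infty}$ is concave, hence continuous, by \cite{Nob2021}. Since $\alpha\mapsto v^{\alpha}$ is non-decreasing and converges pointwise to the continuous function $v^{\infty}$, Dini's theorem applied along any sequence $\alpha_{n}\uparrow\infty$ gives uniform convergence on any compact set, and the monotonicity in $\alpha$ upgrades this to uniform convergence as $\alpha\uparrow\infty$. The main obstacle is the core step: justifying the passage to the limit in the Stieltjes integrals when $L^{\alpha,b}$ and $R^{\alpha,b}$ converge only for a.e.\ $t$, and securing the integrability needed to move $\bE_{x}$ past the limit --- this is exactly where the strengthened assumption from \cite{Nob2021} enters.
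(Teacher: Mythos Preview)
Your proposal is correct and follows essentially the same route as the paper: monotonicity in $\alpha$ via inclusion of admissible classes, the sandwich $v^{\alpha}_{b^{\infty,\ast}}(x)\leq v^{\alpha}_{b^{\alpha,\ast}}(x)\leq v^{\infty}_{b^{\infty,\ast}}(x)$, convergence of $v^{\alpha}_{b^{\infty,\ast}}$ to $v^{\infty}_{b^{\infty,\ast}}$ by rewriting the Stieltjes integrals as $q\int_{0}^{\infty}e^{-qt}G_{t}\,dt$ and invoking Lemma~\ref{Lem705} together with dominated (or monotone) convergence, and Dini's theorem for uniformity. You are in fact slightly more explicit than the paper about the case $b^{\infty,\ast}=0$ and about why the limit function is continuous, but no new idea is involved.
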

}
\begin{proof}
{For fixed $\alpha>0$, let $\cA^\alpha$ be the set of all strategies that satisfies the conditions in Section \ref{SubSec301}. 
Similarly, $\cA^\infty$ be the set of all strategies that satisfies the conditions in \cite[Section 2.2]{Nob2021}. Then, by the definitions of $\cA^\alpha$ and $\cA^\infty$, we have, for $\alpha_1, \alpha_2\in(0, \infty]$ with $\alpha_1<\alpha_2$, $\cA^{\alpha_1} \subset \cA^{\alpha_2}$, 
and thus, we have
\begin{align}
v^{\alpha_1}_{b^{\alpha_1, \ast}}(x)\leq v^{\alpha_2}_{b^{\alpha_2, \ast}}(x), 
\qquad x\in\bR. \label{Rev012}
\end{align}
Since $b^{\alpha, \ast}$ is an optimal barrier, we have $v^{\alpha}_{b^{\infty, \ast}}(x) \leq v^{\alpha}_{b^{\alpha, \ast}}(x)$ for $\alpha>0$ and $x\in\bR$, thus by \eqref{Rev012}, we have
\begin{align}
\absol{v^{\infty}_{b^{\infty, \ast}}(x)-v^{\alpha}_{b^{\alpha, \ast}}(x)}
\leq \absol{v^{\infty}_{b^{\infty, \ast}}(x)-v^{\alpha}_{b^{\infty, \ast}}(x)}
,\qquad \alpha>0, \ x\in\bR. \label{Rev013}
\end{align} 
For $\alpha>0$ and $x \in\bR$, we have
\begin{align}
&\lim_{\alpha\uparrow\infty}\absol{\bE_x\sbra{\int_{[0, \infty)} e^{-qt} dL^{\infty, b^{\infty, \ast}}_t} -\bE_x\sbra{\int_{[0, \infty)} e^{-qt} dL^{\alpha, b^{\infty, \ast}}_t}}\\
&\qquad \qquad = q\lim_{\alpha\uparrow\infty}\absol{\bE_x\sbra{\int_0^\infty e^{-qt} L^{\infty, b^{\infty, \ast}}_t dt} -\bE_x\sbra{\int_0^\infty e^{-qt} L^{\alpha, b^{\infty, \ast}}_t dt}}\\
&\qquad\qquad\leq q\lim_{\alpha\uparrow\infty}\bE_x \sbra{\int_0^\infty e^{-qt}\absol{L^{\infty, b^{\infty, \ast}}_t -L^{\alpha, b^{\infty, \ast}}_t }  dt}=0, \label{Rev014}
\end{align}
where in the first equality, we used integration by parts and in the last equality, we used Lemma \ref{Lem705} and the dominated convergence theorem with  
$\absol{L^{\infty, b^{\infty, \ast}}_t -L^{\alpha, b^{\infty, \ast}}_t } \leq 2L^{\infty, b^{\infty, \ast}}_t$ 
and
\begin{align} 
 q \bE_x \sbra{\int_0^\infty e^{-qt} 2 L^{\infty, b^{\infty, \ast}}_t  dt}
 =2\bE_x \sbra{ \int_{[0,\infty)} e^{-qt} dL^{\infty , b^{\infty , \ast}}_t}<\infty. 
\end{align}
By the same argument as above, we have 
\begin{align}
\lim_{\alpha\uparrow\infty}\absol{\bE_x\sbra{\int_{[0, \infty)} e^{-qt} dR^{\infty, b^{\infty, \ast}}_t} -\bE_x\sbra{\int_{[0, \infty)} e^{-qt} dR^{\alpha, b^{\infty, \ast}}_t}}=0. \label{Rev015}
\end{align}
By \eqref{Rev012}, \eqref{Rev013}, \eqref{Rev014} and \eqref{Rev015}, we obtain \eqref{Rev016}. 
Uniform convergence on any compact set follows from Dini's theorem. 
The proof is complete. 
}
\end{proof}

\section*{Acknowledgments}
I express my deepest gratitude to Prof.\ Kazutoshi Yamazaki and Prof.\ Kouji Yano for their comments on the structure of this paper. 
This work was supported by JSPS KAKENHI Grant Number 21K13807 and JSPS Open Partnership Joint Research 
Projects Grant Number JPJSBP120209921.

\appendix 
\section{Lemma for infimum
} \label{Sec0A}
In this appendix, we give a lemma for the infimum, which is used to prove Theorem \ref{Thm601} and in Appendix \ref{Sec00B}. 
\begin{Lem}\label{LemB01}
Let $t\mapsto x_t$ be the c\`adl\`ag function from $[0,\infty)$ to $\bR$ that has bounded variation and satisfies 
\begin{align}
x_t=x_0+ \delta t+\sum_{s\in(0, t]}(x_s-x_{s-}),\quad t\geq 0 \label{97}
\end{align}
for some $\delta\leq 0$. 
We define 
\begin{align}
I^x_t {:=} \inf_{s\in[0, t]} \rbra{x_s\land 0}, \quad\check{x}_t {:=} x_t- I^x_t  ,    \quad t\geq 0. \label{43}
\end{align}
Then we have, for $t\geq 0$, 
\begin{align}
I^x_t  = \delta \int_0^t 1_{\{ 0\}} (\check{x}_s) ds+ (x_0\land 0)+
\sum_{s\in(0, t]} \rbra{ \check{x}_{s-}+(x_s-x_{s-} )  }\land 0.
\label{47}
\end{align}
\end{Lem}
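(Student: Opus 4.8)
The plan is to prove \eqref{47} by checking that its two sides — write $J_t$ for the right-hand side — are right-continuous functions of bounded variation that agree at $t=0$, have the same jumps, and have the same continuous part. First I would record the basic structure. Since $x$ is c\`adl\`ag, $\inf_{u\in(t,t+h]}(x_u\land 0)\to x_t\land 0$ as $h\downarrow 0$, and combined with $I^x_t\leq x_t\land 0$ this shows $I^x$ is right-continuous; it is non-increasing and bounded on compacts, hence of bounded variation, and $\check x=x-I^x$ is c\`adl\`ag, of bounded variation, non-negative (as $I^x_t\leq x_t\land 0\leq x_t$), with $\check x_{t-}=x_{t-}-I^x_{t-}$, and $\check x_t=0\iff x_t=I^x_t$, which forces $x_t\leq 0$. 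Also $I^x_{t-}=\inf_{u\in[0,t)}(x_u\land 0)\leq x_0\land 0\leq 0$ for every $t>0$. At $t=0$ both sides of \eqref{47} equal $x_0\land 0$ (the empty sum being $0$).

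Next I would match the jumps. Fix $s>0$. Since $I^x_{s-}\leq 0$, we have $I^x_s=I^x_{s-}\land(x_s\land 0)=I^x_{s-}\land x_s$, hence
$$\Delta I^x_s=(x_s-I^x_{s-})\land 0=\rbra{\check x_{s-}+(x_s-x_{s-})}\land 0,$$
which is precisely the jump of $J$ at $s$, the integral term in $J$ being continuous and the constant term $x_0\land 0$ having no jump. Therefore $G:=I^x-J$ is continuous with $G_0=0$, and the claim reduces to showing that the continuous part $I^{x,c}$ of $I^x$ satisfies $dI^{x,c}_t=\delta\,1_{\{0\}}(\check x_t)\,dt$.

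I would establish this in two steps. (a) $|dI^{x,c}|$ does not charge $\{t:\check x_t>0\}$: if $\check x_{t_0}>0$ then $x_{t_0}>I^x_{t_0}$, and right-continuity of $x$ together with $I^x_{t_0}\leq 0$ gives $\eps>0$ with $x_s\land 0\geq I^x_{t_0}$ on $[t_0,t_0+\eps)$, so $I^x$ — hence $I^{x,c}$ — is constant on $[t_0,t_0+\eps)$; covering $\{\check x>0\}$ by countably many such right-intervals gives the claim. (b) On $\{t:\check x_t=0\}$ one has $x_t=I^x_t\leq 0$, and since by \eqref{97} the only continuous part of $x$ is the drift $\delta t$, one checks (immediately when $\delta=0$, in which case $I^x$ moves only by jumps so that $I^{x,c}$ is constant, matching $\delta\,1_{\{0\}}(\check x_t)=0$) that the running infimum satisfies $I^x_{t+h}=x_t+\delta h+\rho_t(h)$ with $|\rho_t(h)|\leq\sum_{u\in(t,t+h]}|x_u-x_{u-}|$, which tends to $0$ as $h\downarrow 0$ as the tail of a convergent sum; this pins down the Radon--Nikodym derivative of $dI^{x,c}$ as $\delta$ a.e.\ on $\{\check x_t=0\}$ and, together with (a), yields $dI^{x,c}_t=\delta\,1_{\{0\}}(\check x_t)\,dt$. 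Hence $G$ is constant, so $G\equiv 0$, which is \eqref{47}.

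The main obstacle is step (b): near a time where $\check x$ touches $0$, infinitely many small jumps of $x$ can accumulate, and one must argue carefully that, in the relevant a.e.\ sense, the running infimum of $x$ still decreases exactly at the drift rate there. A clean way to organize this is to first prove \eqref{47} for the truncated function $x^{(\eps)}$ obtained by deleting every jump of magnitude $<\eps$: such a function has only finitely many jumps on each compact and is affine with slope $\delta$ between them, so $I^{x^{(\eps)}}$, $\check x^{(\eps)}$ and the three terms of \eqref{47} are explicit and the identity is a finite computation. One then lets $\eps\downarrow 0$: $x^{(\eps)}\to x$ uniformly on compacts since $\sum_{u\leq T}|x_u-x_{u-}|<\infty$, the maps $x\mapsto I^x$ and $x\mapsto\check x$ are $1$-Lipschitz for the uniform norm, the jump sum converges by dominated convergence (each summand being bounded in modulus by $|x_u-x_{u-}|$, using $\check x\geq 0$), and the drift term then converges as well; identifying its limit with $\delta\int_0^t 1_{\{0\}}(\check x_s)\,ds$ is a short measure-theoretic argument based on the fact that $\{\check x=0\}$ is, up to a countable set, a countable union of intervals. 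This completes the proof of \eqref{47}.
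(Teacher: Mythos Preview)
Your decomposition of $I^x$ into its jump and continuous parts is sound, and the jump identity $\Delta I^x_s=(\check x_{s-}+\Delta x_s)\land 0$ is clean and correct. The genuine gap is exactly where you locate it: identifying the continuous part as $\delta\,1_{\{0\}}(\check x_s)\,ds$. In your direct step (b), showing $\rho_t(h)\to 0$ does not pin down a derivative; you would need $\rho_t(h)=o(h)$ (this does hold for Lebesgue--a.e.\ $t$, since the total jump variation $t\mapsto\sum_{s\le t}|\Delta x_s|$ is a singular monotone function and hence has a.e.\ derivative zero, but you do not argue it), and even then one must still rule out a singular continuous piece in $dI^{x,c}$. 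In your truncation fallback, removing jumps of \emph{both} signs destroys the monotonicity needed for the indicator: from $\check x^{(\eps)}_s\to\check x_s$ you cannot deduce $1_{\{0\}}(\check x^{(\eps)}_s)\to 1_{\{0\}}(\check x_s)$, because $\check x^{(\eps)}_s\to 0$ does not force $\check x^{(\eps)}_s=0$ eventually. Your structural claim that $\{\check x=0\}$ is, up to a countable set, a countable union of intervals is not justified: between consecutive jump times $x$ is affine with slope $\delta$, but the jump times of $x$ can be dense, in which case there are no such intervals at all.

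The paper's proof is also a truncation argument, but with a decisive twist: it deletes only the small \emph{positive} jumps. Then $\tilde x^{(\eps)}\le x$, and one checks $0\le\check x^{(\eps)}\le\check x$ with $\check x^{(\eps)}\uparrow\check x$ as $\eps\downarrow 0$. Monotonicity now gives, for every $s$, that $\check x_s=0$ implies $\check x^{(\eps)}_s=0$ for all $\eps>0$, while $\check x_s>0$ implies $\check x^{(\eps)}_s>0$ eventually; hence $1_{\{0\}}(\check x^{(\eps)}_s)\to 1_{\{0\}}(\check x_s)$ pointwise, and dominated convergence handles the drift integral directly. The paper organises the base case differently from yours (first the purely non-increasing case, then finitely many positive jumps by gluing), but if you simply make your truncation one-sided your argument goes through.
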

\begin{Rem}
If the L\'evy process $X$ has bounded variation paths and satisfies $\delta \in [0, \alpha]$, then the paths of $X^{(\alpha)}$ satisfy the conditions that are imposed for $x_t$ in Lemma \ref{LemB01}, almost surely. 
\end{Rem}
\begin{proof}[Proof of Lemma \ref{LemB01}]
(i) In the first step, we assume that the function $t\mapsto x_t$ is non-increasing. 
We define 
\begin{align}
D_0{:=}\inf\{t>0: x_t\leq0\}.
\end{align} 
Since $t\mapsto x_t$ is non-increasing and by the definition of $D_0$, we have 
\begin{align}
I^x_t=
\begin{cases}
0, \quad & t< D_0, \\
x_t ,\quad &t\geq D_0, 
\end{cases}
\label{95}
\end{align}
and thus 
\begin{align}
\check{x}_t=
\begin{cases}
x_t, \quad & t< D_0, \\
0 ,\quad &t\geq D_0.  
\end{cases}
\label{96}
\end{align}
By \eqref{96}  and the definition of $D_0$, we have 
\begin{align}
\begin{aligned}
\check{x}_t=x_t>0,&\quad \check{x}_{t-} + (x_t - x_{t-}  )=x_t >0, \quad t\in(0, D_0), \\
\check{x}_{D_0-} +& (x_{D_0} - x_{D_0-}  )=x_{D_0}\leq0 ,  \text{ if }D_0>0\\
&x_0\land 0=
\begin{cases}
0, \quad&\text{ if } D_0 >0,\\
x_0, \quad&\text{ if } D_0 =0, 
\end{cases} 
\end{aligned}
\label{102}
\end{align}
and thus we have 
\begin{align}
(\text{The right-hand side of \eqref{47}})=
\begin{cases}
0, \quad &t<D_0, \\
x_{D_0}, \quad &t=D_0.
\end{cases}
\label{103}
\end{align}
By \eqref{95} and \eqref{103}, \eqref{47} holds for $t\leq D_0$. 
By \eqref{95}, \eqref{97}, and \eqref{96}, we have, for $t> D_0$, 
\begin{align}
I^x_t-I^x_{D_0}=x_t-x_{D_0}
=\delta (t-D_0)+\sum_{s\in(D_0, t]}(x_s -x_{s-})\\
=\delta \int_{D_0}^t 1_{\{ 0\}} (\check{x}_s) ds+ 
\sum_{s\in (D_0, t]} \rbra{ \check{x}_{s-}+(x_s-x_{s-} )  }\land 0. \label{98}
\end{align}
By \eqref{98} and since \eqref{47} holds for $t=D_0$, we have \eqref{47} for $t>D_0$. 
Thus, we obtain \eqref{47} on $t\geq 0$. 
\par
(ii) In the second step, we assume that the map $t\mapsto x_t$ has only a finite number of positive jumps on each compact interval in $[0, \infty)$. 
Let $T^{[k]}$ be the $k$-th positive jump time of $t\mapsto x_t$. 
We consider the value $\inf_{s\in[0, t]} \rbra{ x_s\land 0}$ 
for each interval $[T^{[k]}, T^{[k+1]} )$ with $k\in\bN\cup\{0\}$, where $T^{[0]}=0$. 
Since $t\mapsto x_t$ is non-increasing on $[0, T^{[1]})$ and by (i), we have \eqref{47} for $t\in [0, T^{[1]})$. 
We fix $k\in\bN$. 
For $t\geq T^{[k]}$, we have 
\begin{align}
I^x_t=
\inf_{u\in[T^{[k]}, t]} \rbra{ x_u\land I^x_{T^{[k]}-}}
=I^{x^{[k]}}_{t-T^{[k]}}
+I^x_{T^{[k]}-},
\label{99}
\end{align}
where 
\begin{align}
{x}^{[k]}_t {:=} x_{t+ T^{[k]}}- I^x_{T^{[k]}-},\quad I^{x^{[k]}}_t{:=}\inf_{s\in[0, t]}\rbra{{x}^{[k]}_s \land 0  }, \quad t\geq 0. \label{124}
\end{align}
Since the function $t\mapsto x^{[k]}_t$ is non-increasing on $[0, T^{[k+1]}-T^{[k]})$, we can apply (i) for $t\mapsto x^{[k]}_t$ and have, 
for $t\in[0, T^{[k+1]}-T^{[k]})$, 
\begin{align}
I^{{x}^{[k]}}_t
&=\delta \int_{0}^t 1_{\{ 0\}} (\check{x}^{[k]}_s) ds+ {x}^{[k]}_0 \land 0+
\sum_{s\in(0, t]} \rbra{ \check{x}^{[k]}_{s-}+(x^{[k]}_s-x^{[k]}_{s-} )  }\land 0, \label{117}
\end{align} 
where 
\begin{align}
\check{x}^{[k]}_t{:=}x^{[k]}_t -I^{x^{[k]}}_t, 
\quad t\geq 0.
\end{align}
By \eqref{117} {and} \eqref{124}, and since we have
\begin{align}
\check{x}^{[k]}_t=x_{t+ T^{[k]}}- I^x_{T^{[k]}-}-\inf_{s\in[0,t]}\rbra{\rbra{x_{s+ T^{[k]}}- I^x_{T^{[k]}-}} \land 0}
=\check{x}_{t+T^{[k]}}, \quad t\geq 0, 
\end{align}
we have, for $t\in[0, T^{[k+1]}-T^{[k]})$, 
\begin{align}
I^{{x}^{[k]}}_t
&=\delta \int_{T^{[k]}}^{t+T^{[k]}} 1_{\{ 0\}} (\check{x}_{s}) ds+ {x}^{[k]}_{0}\land 0+
\sum_{s\in(T^{[k]}, t+T^{[k]}]} \rbra{ \check{x}_{s-}+(x_s-x_{s-} )  }\land 0
.\label{100}
\end{align}
Since $t\mapsto x_t$ has a positive jump at $T^{[k]}$, we have 
\begin{align}
{x}^{[k]}_{0}\land 0=0,\quad
\rbra{\check{x}_{T^{[k]}-}+(x_{T^{[k]}}-x_{T^{[k]}-} )}\land 0=0  . \label{101}
\end{align}
By \eqref{99}, \eqref{100}, and \eqref{101}, we have 
\begin{align}
\begin{aligned}
I^x_t
=I^x_{T^{[k]}-}&
+\delta \int_{T^{[k]}}^t 1_{\{0\}}(\check{x}_s)ds \\
&+\sum_{s\in[T^{[k]}, t]} \rbra{ \check{x}_{s-}+(x_s-x_{s-} )  }\land 0, \quad t \in [T^{[k]}, T^{[k+1]}).
\end{aligned}
\label{48}
\end{align}
Since \eqref{47} holds for $t\in [0, T^{(1)})$ and \eqref{48} holds for $k\in \bN$, we obtain \eqref{47} for $t\geq 0$. 
\par
(iii) In the third step, we consider the general cases. 
For $\varepsilon\geq 0$, we define the function $t\mapsto \tilde{x}^{(\varepsilon)}_t$ on $[0, \infty)$ as 
{
\begin{align}
\tilde{x}^{(\varepsilon)}_t = 
x_t-\sum_{s\in(0, t]}( x_s-x_{s-})1_{\{( x_s-x_{s-}) \in (0, \varepsilon) \}}. 
\end{align} }
Here, we assume that $t\mapsto \tilde{x}^{(0)}_t$ implies $t\mapsto x_t$. 
Note that for $\varepsilon>0$, the function $t\mapsto \tilde{x}^{(\varepsilon)}_t$ 
satisfies the assumption in (ii). 
We define
\begin{align}
I^{\tilde{x}^{(\varepsilon)}}_t {:=} \inf_{s\in[0, t]}\rbra{\tilde{x}^{(\varepsilon)}_{{s}}\land 0}, \quad 
\check{x}^{(\varepsilon)}_t {:=} \tilde{x}^{(\varepsilon)}_t - I^{\tilde{x}^{(\varepsilon)}}_t, \quad t\geq 0.
\end{align}
By the definition of $\tilde{x}^{(\varepsilon)}$, for $0\leq \varepsilon_1<\varepsilon_2$, the function 
\begin{align}
t \mapsto \tilde{x}^{(\varepsilon_1)}_t - \tilde{x}^{(\varepsilon_2)}_t
=\sum_{s\in( 0, t]}(x_s-x_{s-})1_{\{x_s - x_{s-} \in[\varepsilon_1 ,\varepsilon_2)\}}\label{104}
\end{align}
is non-decreasing and takes values in $[0, \infty)$. 
For $\varepsilon\geq 0$, we write
\begin{align}
T^{(\varepsilon)}_{\inf} (t){:=}\sup \cbra{ s\in[0, t]: \tilde{x}^{(\varepsilon)}_{s-} \land \tilde{x}^{(\varepsilon)}_s=\inf_{u\in[0,s]}\tilde{x}^{(\varepsilon)}_u } ,\quad \quad t\geq 0, \label{126}
\end{align}
where $\tilde{x}^{(\varepsilon)}_{0-}=\tilde{x}^{(\varepsilon)}_0=x_0$. 
From \eqref{126}, the left-continuity of $s \mapsto \tilde{x}^{(\varepsilon)}_{s-}$, and the right-continuity of $s\mapsto \tilde{x}^{(\varepsilon)}_{s}$ , for $s\in(T_{\inf }^{(\varepsilon)} , t]$,  
there exist $\varepsilon_s, \delta_s>0 $ such that $\tilde{x}^{(\varepsilon)}_\mu > \inf_{u\in [0, s]}\tilde{x}^{(\varepsilon)}_u +\delta_s$ for $\mu\in (s-\varepsilon_s ,s+\varepsilon_s )$, 
and thus we have, for $\varepsilon\geq 0$,  
\begin{align}
I^{\tilde{x}^{(\varepsilon)}}_t = \tilde{x}^{(\varepsilon)}_{T^{(\varepsilon)}_{\inf} (t)-}\land \tilde{x}^{(\varepsilon)}_{T^{(\varepsilon)}_{\inf} (t)}\land 0,\quad t\geq 0. \label{105}
\end{align}
Since the function \eqref{104} is non-negative and by \eqref{105}, we have, for $0\leq \varepsilon_1<\varepsilon_2$, 
\begin{align}
0\leq
\tilde{x}^{(\varepsilon_1)}_{T^{(\varepsilon_1)}_{\inf} (t)-}&\land \tilde{x}^{(\varepsilon_1)}_{T^{(\varepsilon_1)}_{\inf} (t)}\land 0
-\tilde{x}^{(\varepsilon_2)}_{T^{(\varepsilon_1)}_{\inf} (t)-}\land \tilde{x}^{(\varepsilon_2)}_{T^{(\varepsilon_1)}_{\inf} (t)}\land 0
\leq
I^{\tilde{x}^{(\varepsilon_1)}}_t-I^{\tilde{x}^{(\varepsilon_2)}}_t\\
&\leq 
\tilde{x}^{(\varepsilon_1)}_{T^{(\varepsilon_2)}_{\inf} (t)-}\land \tilde{x}^{(\varepsilon_1)}_{T^{(\varepsilon_2)}_{\inf} (t)}\land 0
-\tilde{x}^{(\varepsilon_2)}_{T^{(\varepsilon_2)}_{\inf} (t)-}\land \tilde{x}^{(\varepsilon_2)}_{T^{(\varepsilon_2)}_{\inf} (t)}\land 0.
\label{106}
\end{align}
By the definition of $\tilde{x}^{(\varepsilon_1)}$ and $\tilde{x}^{(\varepsilon_2)}$, 
$\tilde{x}^{(\varepsilon_1)}$ and $\tilde{x}^{(\varepsilon_2)}$ have the same negative jumps at the same times. 
By the above fact and since the function \eqref{104} is non-decreasing, we have, for $0\leq \varepsilon_1<\varepsilon_2$,
\begin{align}
\tilde{x}^{(\varepsilon_1)}_{T^{(\varepsilon_2)}_{\inf} (t)-}\land \tilde{x}^{(\varepsilon_1)}_{T^{(\varepsilon_2)}_{\inf} (t)}\land 0
-\tilde{x}^{(\varepsilon_2)}_{T^{(\varepsilon_2)}_{\inf} (t)-}\land \tilde{x}^{(\varepsilon_2)}_{T^{(\varepsilon_2)}_{\inf} (t)}\land 0
=&\tilde{x}^{(\varepsilon_1)}_{T^{(\varepsilon_2)}_{\inf} (t)-}\land 0
-\tilde{x}^{(\varepsilon_2)}_{T^{(\varepsilon_2)}_{\inf} (t)-}\land 0\\
\leq &\tilde{x}^{(\varepsilon_1)}_t-\tilde{x}^{(\varepsilon_2)}_t,\quad t\geq 0. 
\label{107}
\end{align}
By \eqref{106}, \eqref{107}, and the definition of $\check{x}^{(\cdot)}$, we have, for $0\leq \varepsilon_1<\varepsilon_2$,
\begin{align}
I^{\tilde{x}^{(\varepsilon_1)}}_t-I^{\tilde{x}^{(\varepsilon_2)}}_t 
\in \sbra{0, \tilde{x}^{(\varepsilon_1)}_t-\tilde{x}^{(\varepsilon_2)}_t },\quad  \check{x}^{(\varepsilon_1)}_t-\check{x}^{(\varepsilon_2)}_t\in \sbra{0, \tilde{x}^{(\varepsilon_1)}_t-\tilde{x}^{(\varepsilon_2)}_t }, \quad t\geq 0. \label{108}
\end{align}
From (ii), we have, for $\varepsilon>0$, 
\begin{align}
I^{\tilde{x}^{(\varepsilon)}}_t  = \delta \int_0^t 1_{\{ 0\}} (\check{x}^{(\varepsilon)}_s) ds+ (x_0\land 0)+
\sum_{s\in(0, t]} \rbra{ \check{x}^{(\varepsilon)}_{s-}+(\tilde{x}^{(\varepsilon)}_s-\tilde{x}^{(\varepsilon)}_{s-} )  }\land 0 ,\quad t\geq 0. 
\label{51}
\end{align}
By the definition of $\tilde{x}^{(\varepsilon)}$ and by \eqref{108}, we have 
\begin{align}
&I^{\tilde{x}^{(\varepsilon)}}_t\uparrow I^x_t, \qquad
\check{x}^{(\varepsilon)}_t\uparrow \check{x}_t  ,\qquad t\geq 0  \label{50}
\end{align}
uniformly on compact intervals as $\varepsilon\downarrow 0$. 
In addition, since all $\tilde{x}^{(\varepsilon)}$ for $\varepsilon\geq 0$ have the same negative jumps at the same times and by \eqref{50} and the dominated convergence theorem, we have 
\begin{align}
\sum_{s\in(0, t]} &\rbra{ \check{x}^{(\varepsilon)}_{s-}+(\tilde{x}^{(\varepsilon)}_s-\tilde{x}^{(\varepsilon)}_{s-} )  }\land 0
\uparrow\sum_{s\in(0, t]} \rbra{ \check{x}_{s-}+(x_s-x_{s-} )  }\land 0,
\quad t\geq 0\label{110}
\end{align}
uniformly on compact intervals as $\varepsilon\downarrow 0$. 
{Note that we can use the dominated convergence theorem in \eqref{110} since we have, 
for $\varepsilon >0$ and $s \in (0, t]$,
\begin{align}
 \absol{\rbra{ \check{x}^{(\varepsilon)}_{s-}+(\tilde{x}^{(\varepsilon)}_s-\tilde{x}^{(\varepsilon)}_{s-} )  }\land 0}\leq |\rbra{ x_s-x_{s-}   }\land 0|, 
 \end{align}
and  since $t\mapsto x_t$ has bounded variation paths which implies, 
\begin{align}
\sum_{s\in(0, t]}|\rbra{ x_s-x_{s-}   }\land 0|<\infty. 
\end{align}
}
Since $\check{x}^{(\varepsilon)}_t$ converges to $\check{x}_t$ non-decreasingly as $\varepsilon\downarrow 0$ for $t\geq 0$, we have
\begin{align}
\begin{cases}
\text{if }\check{x}_t>0, \text{ then we have }\check{x}^{(\varepsilon)}_t>0\text{ for sufficiently small }\varepsilon>0,\\
\text{if }\check{x}_t=0, \text{ then we have }\check{x}^{(\varepsilon)}_t=0\text{ for all }\varepsilon>0, 
\end{cases}
\quad t\geq 0 .
\label{111}
\end{align}
By \eqref{111} and the dominated convergence theorem, we have
\begin{align}
\delta \int_0^t 1_{\{ 0\}} (\check{x}^{(\varepsilon)}_s) ds \uparrow \delta \int_0^t 1_{\{ 0\}} (\check{x}_s) ds
,\quad t\geq 0 \label{52}
\end{align}
as $\varepsilon \downarrow 0$. 
By taking the limit of \eqref{51} as $\varepsilon\downarrow 0$ and by \eqref{50}, \eqref{110}, and \eqref{52}, we obtain \eqref{47}. 
The proof is complete. 
\end{proof}
{
\begin{Rem}\label{RemA03}
We consider the case where $\delta>0$ holds in the same situation as Lemma \ref{LemB01}. Then we have, for $t\geq 0 $,  
\begin{align}
I^x_t  = (x_0\land 0)+
\sum_{s\in(0, t]} \rbra{ \check{x}_{s-}+(x_s-x_{s-} )  }\land 0.
\end{align}
It can be proved by first assuming that the map $t\mapsto x_t$ has only a finite number of negative jumps on each compact interval in $[0, \infty)$, and then considering the approximation as in (iii) in the proof of Lemma \ref{LemB01}.  
The proof is similar to A but simpler and is therefore omitted. 
\end{Rem}
}

\section{Existence and uniqueness of the solution of an ordinary differential equation}\label{ODE}
Let $t\mapsto x_t$ be the c\`adl\`ag function from $[0,\infty)$ to $\bR$ that has bounded variation and satisfies \eqref{97} for some $\delta  \in \bR$. 
We 
define the function $t\mapsto x^{(\alpha)}_t$ as 
\begin{align}
x^{(\alpha)}_t {:=} x_t -\alpha t,\quad t\geq 0. 
\end{align}
In this section, we prove the following theorem. 
\begin{Thm}\label{Thm601}
For $b\in\bR$, there exists the unique function $t\mapsto y^b_t$ that satisfies 
\begin{align}
y^b_t = x_t -  \int_0^t h^b (y^b_s) d s,  \quad t \geq 0, \label{114}
\end{align}
where 
\begin{align}
h^b(y){:=}
\begin{cases}
\alpha1_{(b, \infty)}(y), \quad &\delta\not\in[0,\alpha], \\
\alpha1_{(b, \infty)}(y)+\delta 1_{\{b\}}(y), \quad &\delta \in [0, \alpha].
\end{cases}
\end{align}
\end{Thm}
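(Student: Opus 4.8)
The plan is to normalise $b=0$ and then build the solution by stitching together explicit pieces between the jumps of $x$, followed by an approximation argument. Setting $\tilde y_t:=y^b_t-b$, $\tilde x_t:=x_t-b$, the function $\tilde x$ is again c\`adl\`ag of bounded variation satisfying \eqref{97} with the same $\delta$, and since $h^b(\cdot)=h^0(\cdot-b)$, \eqref{114} for $(x,b)$ becomes the same equation for $(\tilde x,0)$. So I take $b=0$ and write $h:=h^0$; note $h$ is bounded, non-negative and, crucially, \emph{non-decreasing} — this is exactly where the hypothesis $\delta\in[0,\alpha]$ of Case~$2$ enters. Uniqueness then follows at once (and could also be cited from \cite[Theorem~5.1]{Tsu2021}): if $y^1,y^2$ both solve \eqref{114} then $y^1-y^2=-\int_0^{\cdot}\bigl(h(y^1_s)-h(y^2_s)\bigr)\,ds$ is Lipschitz because $x$ and all of its jumps cancel, so $\tfrac{d}{dt}|y^1_t-y^2_t|=-\sgn(y^1_t-y^2_t)\bigl(h(y^1_t)-h(y^2_t)\bigr)\le0$ a.e.\ by monotonicity of $h$, and $y^1_0=y^2_0$ forces $y^1\equiv y^2$.

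For existence I would first treat the case where $t\mapsto x_t$ has only finitely many jumps on each compact interval. On an inter-jump interval $x$ is affine with slope $\delta$, so a solution must satisfy the autonomous scalar equation $\dot y=\delta-h(y)=:f(y)$ with $f$ bounded. A sign analysis of $f$ gives existence and uniqueness of the inter-jump piece: in Case~$1$, $f$ is bounded away from $0$ and of constant sign ($f<0$ if $\delta<0$, $f>0$ if $\delta>\alpha$), so the solution is strictly monotone, crosses level $0$ transversally, and is pinned down by $t=\int_{y_0}^{y_t}dz/f(z)$; in Case~$2$, $f\ge0$ on $(-\infty,0)$, $f(0)=0$, $f\le0$ on $(0,\infty)$, so $0$ is a globally attracting equilibrium and the unique solution is the sliding one — it moves monotonically towards $0$ and then sticks at $0$ (possibly remaining constant when $\delta\in\{0,\alpha\}$) — because no solution can leave $0$ (that would need $\dot y\le0$ going up and $\dot y\ge0$ going down). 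Concatenating these pieces and imposing the jump $\Delta y_{t_k}=\Delta x_{t_k}$ at each jump time $t_k$ (forced because $\int_0^{\cdot}h(y_s)\,ds$ is continuous) produces the solution on each $[0,T]$, hence on $[0,\infty)$; the sticky behaviour in Case~$2$ can equally be read off from the infimum formula in Lemma~\ref{LemB01} and Remark~\ref{RemA03}.

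For a general $x$ I would set $x^{(\varepsilon)}_t:=x_t-\sum_{0<s\le t}\Delta x_s\,1_{\{|\Delta x_s|<\varepsilon\}}$; by bounded variation this has finitely many jumps on compacts, satisfies \eqref{97} with the same $\delta$, and $x^{(\varepsilon)}\to x$ uniformly on compacts. Let $y^{(\varepsilon)}$ be the solution just built for $x^{(\varepsilon)}$. Re-running the uniqueness computation for the pair $(x^{(\varepsilon)},x^{(\varepsilon')})$ — the difference of which has vanishing continuous part — yields, for $\varepsilon'<\varepsilon$, $\sup_{t\le T}|y^{(\varepsilon)}_t-y^{(\varepsilon')}_t|\le\sum_{0<s\le T,\ \varepsilon'\le|\Delta x_s|<\varepsilon}|\Delta x_s|\to0$, so $y^{(\varepsilon)}$ converges uniformly on compacts to a c\`adl\`ag $y$ and $A^{(\varepsilon)}:=x^{(\varepsilon)}-y^{(\varepsilon)}=\int_0^{\cdot}h(y^{(\varepsilon)}_s)\,ds$ converges uniformly to $A:=x-y$, which is then non-decreasing, $\alpha$-Lipschitz and starts at $0$. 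It remains to show $\dot A_t=h(y_t)$ for a.e.\ $t$. Where $y_t\neq0$, $h$ is continuous at $y_t$, so $h(y^{(\varepsilon)}_t)\to h(y_t)$ and dominated convergence gives it. On $E:=\{t:y_t=0\}$, decompose $y=\phi+\psi$ with $\phi_t:=x_0+\delta t-A_t$ (Lipschitz) and $\psi_t:=\sum_{0<s\le t}\Delta x_s$ (purely atomic, so $\psi'=0$ a.e.); since $\phi=-\psi$ on $E$, comparing difference quotients along a sequence approaching a density point of $E$ within $E$ shows $\phi'=-\psi'=0$, i.e.\ $\dot A_t=\delta$ a.e.\ on $E$. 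In Case~$2$ this is $h(0)$, so $\dot A_t=h(y_t)$ a.e.\ on $E$ as well; in Case~$1$, $\delta\notin[0,\alpha]$ while $\dot A_t\in[0,\alpha]$ a.e., which forces $|E|=0$. Either way $\dot A_t=h(y_t)$ a.e., and integration gives $y_t=x_t-\int_0^t h(y_s)\,ds$ for all $t$.

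The step I expect to be the genuine obstacle is Case~$2$: contrary to the spectrally one-sided setting, the limiting solution may sit at the refraction level $b$ on a set of positive Lebesgue measure, where $h^b$ is discontinuous and the approximants need not approach $b$ from a fixed side; identifying $\dot A$ there — equivalently, confirming that the value $\delta$ assigned to $h^b$ at $b$ in Case~$2$ is the only consistent choice — is the crux, carried out above via the Lebesgue decomposition of $y$ and, alternatively, via the infimum representation of Lemma~\ref{LemB01}.
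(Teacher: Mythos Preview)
Your argument is correct and follows a genuinely different line from the paper's. For existence in the general case the paper removes small jumps of \emph{one sign only}, obtaining two families of approximants $\tilde y^{(\varepsilon)}$ (from Cases~B and~C) that are \emph{monotone} in $\varepsilon$; the limit is then identified by a two-sided sandwich, using $\int_0^t h^b(\tilde y^{(\varepsilon)}_s)\,ds\le\int_0^t h^b(\tilde y_s)\,ds\le\int_0^t h^b(\tilde y^{(-\varepsilon)}_s)\,ds$ together with the fact that the two ends coincide in the limit. You instead remove small jumps of \emph{both} signs, so your $x^{(\varepsilon)}$ has finitely many jumps total and the inter-jump construction is a straightforward autonomous ODE (no need for the reflected-process representation of Lemma~\ref{LemB01} at this stage); convergence comes from a Cauchy estimate borrowed from the uniqueness argument rather than from monotonicity, and the identification of $\dot A$ on the sticky set $E=\{y=0\}$ is done directly via the Lebesgue decomposition $y=\phi+\psi$ and a density-point argument. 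Your route is arguably more elementary on the construction side and treats Case~1 and Case~2 uniformly in the limit step; the paper's monotone sandwich avoids the density-point lemma and dovetails with the infimum representation of Lemma~\ref{LemB01}, which is reused elsewhere. One phrasing point: ``dominated convergence gives it'' for $\dot A_t=h(y_t)$ on $\{y_t\neq0\}$ is shorthand for the local argument (uniform convergence of $y^{(\varepsilon)}$ forces $A$ to be locally affine with the right slope on a neighbourhood of $t$), not a direct application of DCT to a pointwise derivative; this is easily filled in but worth stating explicitly.
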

Using this theorem, we can prove Theorem \ref{Lem204} directly. 
After that, we give the proof of Theorem \ref{Thm601}.
\par
The proof of the uniqueness is the same as the proof for spectrally negative cases in \cite[pp.~287]{Kyp2014} but changing from $\alpha 1_{\{\cdot>0\}}$ to $h^b(\cdot)$ and from $X_t$ to $x_t$. 
Thus, it is enough to prove the existence of a solution in the following.  
\subsection{Special cases}\label{SecA01}
We assume that the function $t\mapsto x_t$ satisfies one of the following. 
\begin{itemize}
\item[Case A] We have $\delta \not \in [0, \alpha]$. 
\item[Case B] The function $t\mapsto x_t$ has only a finite number of negative jumps on each compact interval in $[0, \infty)$ and we have $\delta\in[0, \alpha]$. 
\item[Case C] The function $t\mapsto x_t$ has only a finite number of positive jumps on each compact interval in $[0, \infty)$ and we have $\delta\in[0, \alpha]$. 
\end{itemize}
We prove that \eqref{114} has a solution when the function $t\mapsto x_t$ satisfies one of the conditions above. 
Here, we use the same idea as the proof of that in \cite[Theorem 10.4]{Kyp2014}. 
\begin{itemize}
\item[Case A]
In this case, we can construct a solution in the same way as the construction of a strong solution of \eqref{114} in the spectrally negative cases with bounded variation paths; 
see \cite[pp.~30]{KypLoe2010} or \cite[the proof of Theorem 10.4]{Kyp2014}. 
\item[Case B] 
For $k\in\bN$, we define functions and times inductively as follows:
\begin{align}
\begin{aligned}
\un{y}^{b, [k]}_t {:=}& \bar{y}^{b, [k-1]}_{\un{S}^{[k-1]}-}
+ (x_t - x_{\un{S}^{[k-1]}-}), \quad t \geq \un{S}^{[k-1]}, 
\\
\bar{S}^{[k]}{:=}&\inf\cbra{ t \geq  \un{S}^{[k-1]} : \un{y}^{b, [k]}_t  \geq b
} ,
\\
\bar{y}^{b, [k]}_t{:=}&\un{y}^{b, [k]}_{\bar{S}^{[k]}} + (x^{(\alpha)}_t - x^{(\alpha)}_{\bar{S}^{[k]}})\\
&\qquad \qquad -   \inf_{s\in [\bar{S}^{[k]}, t] }\rbra{ (\un{y}^{b, [k]}_{\bar{S}^{[k]}} + (x^{(\alpha)}_s - x^{(\alpha)}_{\bar{S}^{[k]}})-b)\land 0} ,
\quad t\geq \bar{S}^{[k]}, 
\\
\un{S}^{[k]}{:=}&\inf\cbra{ t>\bar{S}^{[k]} : \bar{y}^{b, [k]}_{t-}+ 
(x_t-x_{t-}) < b} ,
\label{30}
\end{aligned}
\end{align}
where 
$\un{S}^{[0]}{:=}0$ and $\bar{y}^{b, [0]}_{0-}=\bar{y}^{b, [0]}_{0}{:=}x_{0-}=x_0$. 
Note that $\bar{y}^{b,[k]}$ takes values in $[b, \infty)$ and behaves as a reflected L\'evy process at $b$ when $t\mapsto x_t$ is a path of L\'evy process. 
Since $t\mapsto x_t$ has finitely many negative jumps on each compact interval, we have 
\begin{align}
\lim_{k\uparrow \infty}\bar{S}^{[k]}=\infty.\label{112}
\end{align}
Thus, we can define the function $t\mapsto y^{b,\ast}_t$ on $[0, \infty)$ as 
\begin{align}
y^{b,\ast}_t {:=}
\begin{cases}
\un{y}^{b, [k]}_t, \quad &t\in[\un{S}^{[k-1]}, \bar{S}^{[k]}) ,
\\
\bar{y}^{b, [k]}_t, \quad & t\in[\bar{S}^{[k]} , \un{S}^{[k]}) . 
\end{cases}
\label{13}
\end{align}
We check that the process $y^{b, \ast}$ satisfies \eqref{114}. 
We have, for $k\in\bN$, 
\begin{align}
\bar{S}^{[k]}=\inf\{t\geq  \un{S}^{[k-1]}:  y^{b,\ast}_t \geq b   \}, \quad
\un{S}^{[k]}=\inf\{ t>\bar{S}^{[k]}:  y^{b,\ast}_t< b  \}. \label{77}
\end{align} 
Thus we have, for $k\in\bN$,
\begin{align}
\un{y}^{b, [k]}_t =& \bar{y}^{b, [k-1]}_{\un{S}^{[k-1]}-}+ (x_t - x_{\un{S}^{[k-1]}-})
\\
=&\bar{y}^{b, [k-1]}_{\un{S}^{[k-1]}-}+ (x_t - x_{\un{S}^{[k-1]}-})
-\int_{\un{S}^{[k-1]}}^t h^b(\un{y}^{b, [k]}_s)ds
, \quad t \in [\un{S}^{[k-1]}, \bar{S}^{[k]}). \label{121}
\end{align}
In addition, by Lemma \ref{LemB01}, we have, for $k\in\bN$, 
\begin{align}
\begin{aligned}
\inf_{s\in [\bar{S}^{[k]}, t] }&\rbra{ (\un{y}^{b, [k]}_{\bar{S}^{[k]}} + (x^{(\alpha)}_s - x^{(\alpha)}_{\bar{S}^{[k]}})-b)\land 0}\\
&\qquad=(\delta - \alpha) \int_{\bar{S}^{[k]}}^t 1_{\{b\}} (\bar{y}^{b, [k]}_s) ds, \quad t \in [\bar{S}^{[k]}, \un{S}^{[k]}). 
\end{aligned}
\label{15}
\end{align}
By the definition of $t\mapsto \bar{y}^{b,[k]}_t$ and \eqref{15}, we have, for $k\in\bN$, 
\begin{align}
\bar{y}^{b,[k]}_t&= \un{y}^{b, [k]}_{\bar{S}^{[k]}} + (x^{(\alpha)}_t - x^{(\alpha)}_{\bar{S}^{[k]}}) - (\delta- \alpha)
\int_{\bar{S}^{[k]}}^t 1_{\{b\}} (\bar{y}^{b, [k]}_s)ds \\
&=\un{y}^{b, [k]}_{\bar{S}^{[k]}} + (x_t - x_{\bar{S}^{[k]}}) -
\int_{\bar{S}^{[k]}}^t h^b(\bar{y}^{b, [k]}_s)ds,\quad t \in [\bar{S}^{[k]}, \un{S}^{[k]}). \label{16}
\end{align}
By \eqref{13}, \eqref{121}, 
and \eqref{16}, the function $t\mapsto y^{b, \ast}_t$ satisfies \eqref{114}. 
\item[Case C] 
{
In Case B, the solution could be constructed by joining paths whose motion is reflected by $b$ when taking values above $b$ and whose drift is subtracted by $\alpha$ when taking values below $b$.  Similarly in Case C, the solution can be constructed by joining paths that have the original motion when taking values above $b$, and paths that have the motion reflected at $b$ with the drift subtracted by $\alpha$ when taking values below $b$. The proof is almost the same as in Case B and is therefore omitted. }
\end{itemize}
\par
The proof in special cases {in Theorem \ref{Thm601}} is complete.

\subsection{Other cases}
We assume that  $\delta\in[0, \alpha]$. 
For $\varepsilon \in {\bR} $, we define the function $t\mapsto \tilde{x}^{(\varepsilon)}_t $ on $[0, \infty)$ 
as 
\begin{align}
\begin{aligned}
\tilde{x}^{(\varepsilon)}_t = 
\begin{cases}
 x_t-\sum_{s\in(0, t]}( x_s-x_{s-})1_{\{ (x_s-x_{s-}) \in (\varepsilon, 0) \}} , \quad &\varepsilon<0,\\
x_t-\sum_{s\in(0, t]}( x_s-x_{s-})1_{\{( x_s-x_{s-}) \in (0, \varepsilon) \}},   \quad &\varepsilon {\geq }0. 
\end{cases}
\end{aligned}
\end{align}
Then, the function $t\mapsto \tilde{x}^{(\varepsilon)}_t$ is a function that belongs to Case B in Section \ref{SecA01} when $\varepsilon<0$ and Case C 
in Section \ref{SecA01} when $\varepsilon > 0$ 
and that converges to the function $t\mapsto x_t$ uniformly on compact intervals as $\varepsilon \to 0$ 
since the function $t\mapsto x_t$ has bounded variation and by the definition of $t\mapsto x^{(\varepsilon)}_t$. 
In addition, the function $t\mapsto \tilde{x}^{(\varepsilon)}_t$ satisfies 
\begin{align}
\tilde{x}^{(\varepsilon_2)}_t \leq \tilde{x}^{(\varepsilon_1)}_t, \qquad-\infty< \varepsilon_1< \varepsilon_2<\infty, ~t\geq 0 .
\end{align}
\par
For $\varepsilon\neq 0$, let the function $t\mapsto \tilde{y}^{(\varepsilon)}_t$ on $[0, \infty)$ be the solution of \eqref{114} driven by the function $t\mapsto\tilde{x}^{(\varepsilon)}_t$. 
For $\varepsilon_1, \varepsilon_2\in\bR\backslash\{0\}$ with $\varepsilon_1< \varepsilon_2$, we have 
\begin{align}
\tilde{y}^{(\varepsilon_1)}_t -\tilde{y}^{(\varepsilon_2)}_t =\tilde{x}^{(\varepsilon_1)}_t -\tilde{x}^{(\varepsilon_2)}_t-
H^b_t, \quad t\geq 0, \label{131}
\end{align}
where 
\begin{align}
H^b_t = \int_0^t \rbra{h^b (\tilde{y}^{(\varepsilon_1)}_s)-h^b (\tilde{y}^{(\varepsilon_2)}_s) } d s, \quad t\geq 0.
\end{align}
Since $h^b$ is non-decreasing, we have
\begin{align}
h^b (\tilde{y}^{(\varepsilon_1)}_s)-h^b (\tilde{y}^{(\varepsilon_2)}_s) 
\begin{cases}
\geq 0 ,\quad &\tilde{y}^{(\varepsilon_1)}_s \geq \tilde{y}^{(\varepsilon_2)}_s,\\
\leq 0 ,\quad &\tilde{y}^{(\varepsilon_1)}_s \leq \tilde{y}^{(\varepsilon_2)}_s, 
\end{cases}
\quad s\geq 0. \label{22}
\end{align}
We want to check that 
\begin{align}
\tilde{y}^{(\varepsilon_1)}_t -\tilde{y}^{(\varepsilon_2)}_t \in[ 0, \tilde{x}^{(\varepsilon_1)}_t -\tilde{x}^{(\varepsilon_2)}_t], \quad t\geq 0. \label{24}
\end{align}
To demonstrate, we assume that $\tilde{y}^{(\varepsilon_1)}_t -\tilde{y}^{(\varepsilon_2)}_t<0$ for some $t>0$. 
Since the function $t\mapsto \tilde{x}^{(\varepsilon_1)}_t -\tilde{x}^{(\varepsilon_2)}_t$ does not have negative jumps and $t\mapsto H^b_t $ is continuous, there exist $t_0, t_1 \in[0,\infty)$ with $t_0<t_1$ such that 
\begin{align}
\tilde{y}^{(\varepsilon_1)}_{t_1-} -\tilde{y}^{(\varepsilon_2)}_{t_1-} <\tilde{y}^{(\varepsilon_1)}_t -\tilde{y}^{(\varepsilon_2)}_t <0
, \quad t\in (t_0, t_1). \label{115}
\end{align}
Since $t\mapsto \tilde{x}^{(\varepsilon_1)}_t -\tilde{x}^{(\varepsilon_2)}_t$ is non-decreasing {and by \eqref{131}}, 
there exists $t_2\in (t_0, t_1)$ such that 
\begin{align}
h^b (\tilde{y}^{(\varepsilon_1)}_{t_2})-h^b (\tilde{y}^{(\varepsilon_2)}_{t_2})>0. \label{116}
\end{align} 
However, \eqref{115} and \eqref{116} contradict \eqref{22}, and thus we have 
\begin{align}
\tilde{y}^{(\varepsilon_1)}_t -\tilde{y}^{(\varepsilon_2)}_t \geq 0, 
\quad 
h^b (\tilde{y}^{(\varepsilon_1)}_t)-h^b (\tilde{y}^{(\varepsilon_2)}_t)\geq 0, 
\quad t\geq 0. \label{113}
\end{align}
By \eqref{113}, $t\mapsto H^b_t$ is a non-decreasing process that takes only non-negative values. 
Therefore, we obtain \eqref{24}. 
\par
From \eqref{24}, the function $\varepsilon \mapsto \tilde{y}^{(\varepsilon)}_t$ is non-increasing on $\bR$ for $t\geq 0$, and thus we can define 
{the function} $t\mapsto \tilde{y}_t$ as 
\begin{align}
\tilde{y}_t = \lim_{\varepsilon \downarrow 0}\tilde{y}^{(\varepsilon)}_t= \lim_{\varepsilon \uparrow 0}\tilde{y}^{(\varepsilon)}_t, \quad t\geq 0,
\end{align}
where in the last equality we used 
\begin{align}
\lim_{\varepsilon \downarrow 0}\rbra{\tilde{y}^{(-\varepsilon)}_t-\tilde{y}^{(\varepsilon)}_t}=0, \quad t\geq 0 ,
\end{align}
which comes from \eqref{24} and $\lim_{\varepsilon \to 0} x^{(\varepsilon)}_t = x_t$ for $t\geq 0$. 
We want to check that the function $t\mapsto \tilde{y}_t$ is a solution of \eqref{114} driven by the function $t\mapsto x_t$. 
By the definition of $h^b$ and since $\tilde{y}^{(\varepsilon)}_t\leq\tilde{y}_t\leq\tilde{y}^{(-\varepsilon)}_t$ for $\varepsilon >0$ and $t\geq 0$, we have
\begin{align}
\int_0^t h^b (\tilde{y}^{(\varepsilon)}_s) d s \leq \int_0^t h^b (\tilde{y}_s) d s \leq \int_0^t h^b (\tilde{y}^{(-\varepsilon)}_s) d s, 
\qquad \varepsilon>0, ~t\geq 0.\label{25}
\end{align}
Since we have $\lim_{\varepsilon\to 0}\tilde{x}^{(\varepsilon)}_t=x_t$ and $\lim_{\varepsilon\to 0}\tilde{y}^{(\varepsilon)}_t=\tilde{y}_t$ for $t\geq 0$ and since the functions $t\mapsto \tilde{y}^{(-\varepsilon)}_t$ and $t\mapsto\tilde{y}^{(\varepsilon)}_t$ satisfy \eqref{114} driven by the functions $t\mapsto \tilde{x}^{(-\varepsilon)}_t$ and $t\mapsto \tilde{x}^{(\varepsilon)}_t$, respectively, we have 
\begin{align}
\lim_{\varepsilon \downarrow 0}\int_0^t h^b (\tilde{y}^{(-\varepsilon)}_s) d s = \lim_{\varepsilon\downarrow0}\int_0^t h^b (\tilde{y}^{(\varepsilon)}_s) d s, \quad t\geq 0.\label{26}
\end{align}
From \eqref{25} and \eqref{26}, we have 
\begin{align}
\lim_{\varepsilon\to0}\int_0^t h^b (\tilde{y}^{(\varepsilon)}_s) d s=\int_0^t h^b (\tilde{y}_s) d s,\quad t\geq 0. \label{27}
\end{align}
Since we have $\lim_{\varepsilon\to 0}\tilde{x}^{{(\varepsilon)}}_t=x_t$ and $\lim_{\varepsilon\to 0}\tilde{y}^{(\varepsilon)}_t=\tilde{y}_t$ for $t\geq 0$ and the function $t\mapsto\tilde{y}^{(\varepsilon)}_t$ satisfies \eqref{114} driven by the function $t\mapsto\tilde{x}^{(\varepsilon)}_t$ for $\varepsilon\in\bR\backslash\{0\}$, and by \eqref{27}, the function $t\mapsto\tilde{y}_t$ satisfies \eqref{114} driven by the function $t\mapsto x_t$. 
\par
The proof {of Theorem \ref{Thm601}} is complete.


\section{Behavior of $U^{\pi^0}$}\label{Sec00B}
In this appendix, we confirm that the process $U^{\pi^0}$, which is the resulting process of the refraction--reflection strategy at $ 0$ defined in Section \ref{Sec302}, corresponds to the process $Z^0$.
\par
We consider the proof in three cases. 
\par
(i) We assume that $X$ 
has unbounded variation paths or bounded variation paths with $\delta > \alpha$. 
When $X$ has unbounded variation paths, since $0$ is regular for $(-\infty , 0]$ for $X^{(\alpha)}$ and by \cite[Theorem 6.7]{Kyp2014}, we have 
\begin{align}
\int_0^t 1_{(0, \infty)} (Z^{0}_s)ds=t ,\quad t\geq 0.\label{53}
\end{align}
When $X$ has bounded variation paths with $\delta > \alpha$, since $0$ is irregular for $(-\infty , 0]$ for $X^{(\alpha)}$, 
the process $Z^0$ takes a value $0$ only a finite number of times in a finite time, and thus 
we have \eqref{53}. 
From \eqref{38} and \eqref{53}, we have 
\begin{align}
L^{\pi^0}_t=\int_0^t l^{\pi^0}_s ds= \int_0^t \alpha 1_{(0, \infty)} (Z^{0}_s)ds=\alpha t, \quad t\geq 0. \label{35}
\end{align}
From \eqref{38} and \eqref{35}, we have
\begin{align}
R^{\pi^0}_t =-\inf_{s\in[0, t]} \rbra{\rbra{ X_s -L^{\pi^0}_s  } \land 0}=-\inf_{s\in[0, t]}\rbra{X^{(\alpha)}_s\land 0},
 \quad   t\geq 0. \label{37}
\end{align}
From \eqref{35}, \eqref{37}, and \eqref{36}, we have 
\begin{align}
U^{\pi^0}_t =X_t-L^{\pi^0}_t+R^{\pi^0}_t =Z^0_t, \quad t\geq 0.
\end{align}
\par
(ii) We assume that $X$ 
has bounded variation paths with $\delta\leq \alpha$. 
From \eqref{38}, we have
\begin{align}
&U^{\pi^0}_t = X_t-\int_0^t h^{0} (Z^{0}_s)ds-\inf_{u\in[0, t]} \rbra{\rbra{ X_u -\int_0^u h^{0} (Z^{0}_s)ds  } \land 0}\\
&=X^{(\alpha)}_t+ (\alpha-{(}\delta\lor 0{)})\int_0^t 1_{\{0\}} (Z^{0}_s)ds-\inf_{u\in[0, t]} \rbra{\rbra{ X^{(\alpha)}_u +(\alpha-{(}\delta \lor 0{)})\int_0^u 1_{\{0\}} (Z^{0}_s)ds  } \land 0}. 
\end{align}
{Note that the definition of $h^0$ depends on the value of $\delta$ since Case $1$ applies if $\delta<0$ holds and Case $2$ applies if $\delta\geq0$ holds.}
By comparing with \eqref{36}, 
it is enough to show that 
\begin{align}
\begin{aligned}
(\alpha-{(}\delta\lor 0{)})\int_0^t 1_{\{0\}} (Z^{0}_s)ds-\inf_{u\in[0, t]} \rbra{\rbra{ X^{(\alpha)}_u +(\alpha-{(}\delta \lor 0{)})\int_0^u 1_{\{0\}}  (Z^{0}_s)ds  } \land 0} \ \ &\\
\qquad \qquad \qquad =-\inf_{s\in[0, t]}\rbra{X^{(\alpha)}_s\land 0}, \quad t\geq 0 .
\end{aligned}
\label{54}
\end{align} 
We fix $t\geq 0$.
If $Z^0_s\neq 0$ for $s \in[0,t]$, we have 
\begin{align}
(\alpha-{(}\delta\lor 0{)})\int_0^t 1_{\{0\}} (Z^{0}_s)ds=0,
\end{align}
and thus \eqref{54} is true. 
We assume that $Z^0_s= 0$ for some $s \in[0,t]$ and define  
\begin{align}
T_{\inf}{:=}& \sup \cbra{s\in [0, t ] : Z^0_{s-}\land Z^0_s =0}\label{55}\\
{=}&{\sup \cbra{s\in [0, t ] : X^{(\alpha)}_{s-} \land X^{(\alpha)}_s = \inf_{u\in[0, s]}X^{(\alpha)}_u }}
. \label{56}
\end{align}
By \eqref{56} and the same argument as that of the proof of \eqref{105}, we have 
\begin{align}
-\inf_{s\in[0, t]}\rbra{X^{(\alpha)}_s\land 0}=-\rbra{X^{(\alpha)}_{T_{\inf}-}\land X^{(\alpha)}_{T_{\inf}}}. \label{59}
\end{align}
Since $T_{\inf}$ exists, and by \eqref{36} with Lemma \ref{LemB01}, and \eqref{55}, we have 
{
\begin{align}
&
\begin{aligned}
&(\text{the left hand side of \eqref{54}})=
(\alpha-{(}\delta \lor 0{)})\int_0^{T_{\inf}} 1_{\{0\}} (Z^{0}_s)ds-\inf_{u\in[0, t]} \bigg{(}Z^0_u \\
&+ (\delta\land 0)\int_0^{u\land T_{\inf}} 1_{\{0\}}(Z^0_s)ds
 +(X^{(\alpha)}_0 \land 0)+\sum_{s\in(0, u\land T_{\inf}]} \rbra{\rbra{ Z^0_{s-}+(X^{(\alpha)}_s-X^{(\alpha)}_{s-} )  }\land 0 }\bigg{)} . 
\end{aligned}
\label{57}
\end{align}
}
Since the map 
\begin{align}
u\mapsto (\delta\land 0)\int_0^{u\land T_{\inf}} 1_{\{0\}}(Z^0_s)ds+\sum_{s\in(0, u\land T_{\inf}]} \rbra{\rbra{ Z^0_{s-}+(X^{(\alpha)}_s-X^{(\alpha)}_{s-} )  }\land 0}
\end{align}
is non-increasing on $[0, t]$, $Z^0$ takes non-negative values, and $Z^0_{T_{\inf}-}\land Z^0_{T_{\inf}}=0$, we have
{
\begin{align}
&(\alpha-{(}\delta \lor 0{)})\int_0^{T_{\inf}} 1_{\{0\}} (Z^{0}_s)ds- \bigg{(}\rbra{Z^0_{T_{\inf}-}\land Z^0_{T_{\inf}}} +(\delta\land 0)\int_0^{ T_{\inf}} 1_{\{0\}}(Z^0_s)ds\\
&+(X^{(\alpha)}_0 \land 0) +\sum_{s\in(0,  T_{\inf}]} \rbra{\rbra{ Z^0_{s-}+(X^{(\alpha)}_s-X^{(\alpha)}_{s-} )  }\land 0} \bigg{)}=- \rbra{X^{(\alpha)}_{T_{\inf}-}\land X^{(\alpha)}_{T_{\inf}}} 
, \label{58}
\end{align}}
where in the last equality we used \eqref{36} with Lemma \ref{LemB01}. 
From \eqref{59}, \eqref{57}, and \eqref{58}, we obtain \eqref{54}. The proof is complete.

\section{Behavior of two paths} \label{two_paths}
In this appendix, we consider the behavior of two paths of refracted--reflected processes. 
{Here, we use the same notation as in the proof of Lemma \ref{Lem302}.}
\begin{Lem}\label{Lem_paths}
We fix $b>0$, $x\in\bR$, and $k, l\in\bR$ with $l-k\in(0, b)$. 
Then, $\bP_x$-a.s., we have the following.
\begin{enumerate}
\renewcommand{\labelenumi}{(\arabic{enumi})}
\item 
We have
\begin{align}
(Z^{[l],b}_t-Z^{[k],b}_t )+(L^{[l],b}_t-L^{[k],b}_t)-(R^{[l],b}_t-R^{[k],b}_t)=l-k, \quad t\geq 0. \label{72}
\end{align}
\item
The process $\{Z^{[l],b}_t-Z^{[k],b}_t: t\geq 0\}$ is non-increasing and takes values in $[0, l-k]$. 
\item
The process $\{L^{[l],b}_t-L^{[k],b}_t: t\geq 0\}$ is non-decreasing and takes values in $[0, l-k]$. 
The support of its Stieltjes measure is included in the closure of $\{t \geq 0 : b\in[Z^{[k],b}_t, Z^{[l],b}_t], Z^{[k],b}_t\neq Z^{[l],b}_t\}$. 
\item
The process $\{R^{[l],b}_t-R^{[k],b}_t: t\geq 0\}$ is non-increasing and takes values in $[ -(l-k), 0]$. 
The support of its Stieltjes measure is included in the closure of $\{t \geq 0 : Z^{[k],b}_{t-} \land Z^{[k],b}_t=0\}$.
\end{enumerate}
{
\begin{Rem}\label{RemD02}
Lemma \ref{Lem_paths} is correct without the assumption "$l-k\in(0, b)$". 
In fact, for $k, l\in \bR$ with $l<k$, there exists $n\in\bN$ and $l_1, \cdots, l_n \in\bR$ with $l<l_1<\cdots<l_n<k$ such that $l_{m+1}-l_m \in (0, b)$ for $m=0 ,1 ,\cdots, n$ where $l_0:=l$ and $l_{n+1}:= k$. Then the statement of Lemma \ref{Lem_paths} with $l=l_m$ and $k=l_{m+1}$ is true for 
$m=1, \cdots, n$ and we can extend Lemma \ref{Lem_paths} to delete "$l-k\in(0, b)$" easily. 
\end{Rem}
}
\end{Lem}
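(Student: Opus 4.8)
The plan is to couple the two refracted--reflected processes $Z^{[k],b}$ and $Z^{[l],b}$ driven by the same underlying L\'evy process $X$, and to track the difference process $D_t := Z^{[l],b}_t - Z^{[k],b}_t$ together with the differences of the cumulative dividend and injection processes. The identity \eqref{72} is the easy starting point: since $Z^{[\varepsilon],b}_t = X^{[\varepsilon]}_t - L^{[\varepsilon],b}_t + R^{[\varepsilon],b}_t = X_t + \varepsilon - L^{[\varepsilon],b}_t + R^{[\varepsilon],b}_t$ by definition, subtracting the $\varepsilon = k$ version from the $\varepsilon = l$ version makes the common $X_t$ cancel and leaves exactly $(Z^{[l],b}_t - Z^{[k],b}_t) + (L^{[l],b}_t - L^{[k],b}_t) - (R^{[l],b}_t - R^{[k],b}_t) = l - k$ for all $t \geq 0$. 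So (1) is immediate, and in view of (1) it suffices to prove the monotonicity and range statements in (2)--(4), since the range of each of the three terms then follows from the ranges of the other two together with \eqref{72}.

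Next I would establish the monotonicity of $D$ and the location of the supports by an induction over the successive excursions of the coupled pair above and below the threshold $b$, using the inductive Step~0/Step~1/Step~2 construction of the refracted--reflected process recalled in Section \ref{SecRefRef}. At time $0$ we have $D_0 = l - k \in (0,b)$. On any time interval where both processes lie strictly below $b$, both evolve as (reflected) copies of $X$ with the same drift correction $h^b \equiv 0$ on that region, so on such intervals $D$ is constant except that it can only decrease when the lower process $Z^{[k],b}$ is pushed up off $0$ by its reflection while the upper one is not (this is where $R^{[k],b}$ increases but $R^{[l],b}$ does not), which forces the support statement in (4) and keeps $D \geq 0$; and when both lie strictly above $b$ both have drift reduced by $\alpha$ (or by $\delta$ at $b$ in Case $2$), so again $D$ is locally constant there. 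The only way $D$ can change by dividends is on the set where $b$ lies between the two values, i.e.\ $Z^{[k],b}_t \leq b \leq Z^{[l],b}_t$ with the two not equal: there the upper process pays dividends at the higher rate while the lower one does not, so $L^{[l],b} - L^{[k],b}$ increases and, by \eqref{72} together with the already-established monotonicity of $R^{[l],b} - R^{[k],b}$, this makes $D$ decrease; this pins down the support statement in (3). One must also check that the two processes cannot cross, i.e.\ $D$ stays $\geq 0$: this follows because whenever $D$ would try to become negative, the ordering of the drift corrections $h^b(Z^{[k],b}) \leq h^b(Z^{[l],b})$ fails only when $Z^{[l],b} < Z^{[k],b}$, and then the instantaneous drift pushes them back together, exactly as in the monotonicity argument for the ODE solutions in Appendix \ref{ODE} (see \eqref{22}--\eqref{113}); at jump times of $X$ the common jump is added to both so $D$ is unchanged.

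The main obstacle I expect is handling the Case~$2$ subtleties and the reflection/regularity issues carefully enough that the support claims hold with the stated closures rather than just almost everywhere. Specifically, when $\delta \in [0,\alpha]$ the process can sit at the level $b$ for a positive amount of time with the intermediate drift $\delta$, and one must verify that during such a sojourn the difference $D$ still behaves monotonically and that dividends at the two rates $\alpha$ and $\delta$ are correctly accounted for in \eqref{72}; similarly the reflection at $0$ can be sticky or instantaneous depending on regularity of $0$ for $Y^0$, and the support of $R^{[l],b} - R^{[k],b}$ must be shown to sit inside the closure of $\{t : Z^{[k],b}_{t-} \wedge Z^{[k],b}_t = 0\}$, which I would extract from the representation of the infimum in Lemma \ref{LemB01} applied to the paths of the driving process (this is precisely why Appendix \ref{Sec0A} was developed). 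The bookkeeping at the countably many switching times $\bar T, \un T$ of the construction, and the verification that the inductively pieced-together difference process indeed has locally finite variation with the claimed monotone Stieltjes measures, is where the real work lies; everything else is the straightforward propagation of inequalities described above.
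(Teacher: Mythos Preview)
Your proposal is correct and follows essentially the same route as the paper: prove \eqref{72} by direct cancellation, then run an induction over successive phases of the coupled pair, distinguishing a ``reflection'' regime (both behave as $X$ reflected at $0$) from a ``refraction'' regime (both behave as refracted processes at $b$), and read off the monotonicity and support statements in each regime.

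Two points where the paper is sharper than your sketch. First, the phases are delimited by the specific stopping times $\bar K^{[n]}_b:=\inf\{t>\un K^{[n-1]}_0:Z^{[l],b}_t\ge b\}$ and $\un K^{[n]}_0:=\inf\{t>\bar K^{[n]}_b:Z^{[k],b}_t=0\}$, i.e.\ one watches the \emph{upper} process to enter the refraction regime and the \emph{lower} process to enter the reflection regime. The hypothesis $l-k\in(0,b)$ is used exactly here: it guarantees that during each phase \emph{both} processes are simultaneously free of the other mechanism (neither hits $0$ during refraction, neither exceeds $b$ during reflection), so the explicit reflected/refracted formulas apply to each separately. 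Your four-case description (both below $b$, both above $b$, straddling, at $0$) is consistent with this but does not by itself give a clean induction; the two stopping times are what make the bookkeeping work. Second, for the non-crossing of the pair during the refraction phase the paper argues that $D_t$ moves continuously to $0$ (since $h^b$ is bounded and non-decreasing) and then invokes uniqueness of the strong solution of \eqref{4} to conclude $D\equiv0$ thereafter; this is quicker than the ODE-style ``push-back'' argument you outline, and it covers the unbounded-variation case directly. Finally, your anticipated Case~$2$ obstacle does not materialise: the function $h^b$ already encodes the sojourn at $b$, and since $h^b$ is non-decreasing the refraction-phase argument goes through without any separate case analysis.
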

\begin{proof}[Proof {of Lemma \ref{Lem_paths}}]
The identity \eqref{72} is true since the left-hand side of \eqref{72} is equal to $X^{[l]}_t -X^{[k]}_t$ for $t\geq 0$. 
\par
We define the stopping times $\bar{K}^{[n]}_b$ and $\un{K}^{[n]}_0$ as follows: for $n\in\bN$, 
\begin{align}
\bar{K}^{[n]}_b {:=} \inf \{t> \un{K}^{[n-1]}_0 : Z^{[l],b}_t \geq b \}, \quad
\un{K}^{[n]}_0 {:=} \inf \{t> \bar{K}^{[n]}_b : Z^{[k],b}_t =0\},
\end{align}
where $\un{K}^{[0]}_0{:=}0$. We assume that $Z^{[m], b}_{0-}{=X^{[m], b}_{0-}}=X^{[m], b}_0$, $L^{[m], b}_{0-}=0$, and $R^{[m], b}_{0-}=0$ for $m=k, l$. 
Then, we can observe the behavior of sample paths, inductively, as follows.
\par
(i) We observe the behavior for $t\in[\un{K}^{[n-1]}_0, \bar{K}^{[n]}_b)$. 
Note that $Z^{[l],b}_{\un{K}^{[n-1]}_0-}-Z^{[k],b}_{\un{K}^{[n-1]}_0-} { \in[0, l-k]}$ by (ii) on $t\in[\bar{K}^{[n-1]}_b, \un{K}^{[n-1]}_0)$ or the definition of $Z^{[m], b}_{0-}$ for $m=k, l$. 
From the definitions of the processes $Z^{[l],b}$ and $Z^{[k],b}$, they behave as the reflected L\'evy processes of {$X$} at $0$, respectively, 
{
and so we have
\begin{align}
&Z^{[l],{b}}_t-Z^{[k],{b}}_t
=Z^{[l],{b}}_{\un{K}^{[n-1]}_0-}+ \rbra{{X}_t - {X}_{\un{K}^{[n-1]}_0-}}
-\inf_{s\in[\un{K}^{[n-1]}_0, t ]}\bigg{(}\bigg{(}Z^{[l],{b}}_{\un{K}^{[n-1]}_0-}\\
&\quad + \rbra{{X}_s -{X}_{\un{K}^{[n-1]}_0-}}\bigg{)}\land 0\bigg{)}
-\Bigg{(}Z^{[k],{b}}_{\un{K}^{[n-1]}_0-}+ \rbra{{X}_t - {X}_{\un{K}^{[n-1]}_0-}}\\
&\quad -\inf_{s\in[\un{K}^{[n-1]}_0, t ]}\rbra{\rbra{Z^{[k],{b}}_{\un{K}^{[n-1]}_0-}+ \rbra{{X}_s - {X}_{\un{K}^{[n-1]}_0-}}}\land 0}\Bigg{)}
\\
&
\begin{aligned}
&=\rbra{Z^{[l],{b}}_{\un{K}^{[n-1]}_0-}-Z^{[k],{b}}_{\un{K}^{[n-1]}_0-}}
-\Bigg{(}\inf_{s\in[\un{K}^{[n-1]}_0, t ]}\rbra{\rbra{Z^{[l],{b}}_{\un{K}^{[n-1]}_0-}+ \rbra{{X}_s - {X}_{\un{K}^{[n-1]}_0-}}}\land 0}\\
&\quad -\inf_{s\in[\un{K}^{[n-1]}_0, t ]}\rbra{\rbra{Z^{[k],{b}}_{\un{K}^{[n-1]}_0-}+ \rbra{{X}_s - {X}_{\un{K}^{[n-1]}_0-}}}\land 0}
\Bigg{)},
\quad t\in [\un{K}^{[n-1]}_0, \bar{K}^{[n]}_b).
\end{aligned}
\label{Rev001}
\end{align}
}
{Thus} the behavior of $\{Z^{[l],b}_t-Z^{[k],b}_t : t\in [\un{K}^{[n-1]}_0, \bar{K}^{[n]}_b)\}$ is non-increasing 
and takes values 
in $[0, Z^{[l],b}_{\un{K}^{[n-1]}_0-}-Z^{[k],b}_{\un{K}^{[n-1]}_0-}]$. 
{
In fact, we define 
\begin{align}
\tau^{[m],n-1}_0=\inf\cbra{t>\un{K}^{[n-1]}_0: Z^{[m],{b}}_{\un{K}^{[n-1]}_0-}+ \rbra{X_s - X_{\un{K}^{[n-1]}_0-}}<0},\quad m=k, l, 
\end{align}
then we have, for $t\in [\un{K}^{[n-1]}_0, \tau^{[k],n-1}_0\land\bar{K}^{[n]}_b)$, 
\begin{align}
\eqref{Rev001}=Z^{[l],{b}}_{\un{K}^{[n-1]}_0-}-Z^{[k],{b}}_{\un{K}^{[n-1]}_0-}, 
\end{align}
for $t\in [ \tau^{[k],n-1}_0, \tau^{[l],n-1}_0\land\bar{K}^{[n]}_b)$, 
\begin{align}
\eqref{Rev001}=\rbra{Z^{[l],{b}}_{\un{K}^{[n-1]}_0-}-Z^{[k],{b}}_{\un{K}^{[n-1]}_0-}}
+\inf_{s\in[\un{K}^{[n-1]}_0, t ]}\rbra{Z^{[k],{b}}_{\un{K}^{[n-1]}_0-}+ \rbra{X_s - X_{\un{K}^{[n-1]}_0-}}}, 
\end{align}
and for $t\in [\tau^{[l],n-1}_0, \bar{K}^{[n]}_b)$, 
\begin{align}
&\rbra{Z^{[l],{b}}_{\un{K}^{[n-1]}_0-}-Z^{[k],{b}}_{\un{K}^{[n-1]}_0-}}
-\Bigg{(}\inf_{s\in[\un{K}^{[n-1]}_0, t ]}\rbra{Z^{[l],{b}}_{\un{K}^{[n-1]}_0-}+ \rbra{X_s - X_{\un{K}^{[n-1]}_0-}}}\\
&\qquad\qquad\qquad  -\inf_{s\in[\un{K}^{[n-1]}_0, t ]}\rbra{Z^{[k],{b}}_{\un{K}^{[n-1]}_0-}+ \rbra{X_s - X_{\un{K}^{[n-1]}_0-}}}
\Bigg{)}\\
&=\rbra{Z^{[l],{b}}_{\un{K}^{[n-1]}_0-}-Z^{[k],{b}}_{\un{K}^{[n-1]}_0-}}-\rbra{Z^{[l],{b}}_{\un{K}^{[n-1]}_0-}-Z^{[k],{b}}_{\un{K}^{[n-1]}_0-}}=0. 
\end{align}
}
In addition, the support of its Stieltjes measure is included in the closure of $\{t \in [\un{K}^{[n-1]}_0, \bar{K}^{[n]}_b) : Z^{[k],b}_{t-}\land Z^{[k],b}_t=0\}$ since 
the map 
\begin{align}
t\mapsto \inf_{s\in[\un{K}^{[n-1]}_0, t ]}\rbra{\rbra{Z^{[k],0}_{\un{K}^{[n-1]}_0-}+ \rbra{{X}_s - {X}_{\un{K}^{[n-1]}_0-}}}\land 0}
\end{align} 
may decrease on $\{t \geq 0 : Z^{[k],b}_{t-}\land Z^{[k],b}_t=0\}$ by the same argument as that after \eqref{55}. 
Since $Z^{[k],b}_t$ and $Z^{[l],b}_t$ take values in $[0, b)$ for $t\in (\un{K}^{[n-1]}_0, \bar{K}^{[n]}_{b})$, 
the process $\{L^{[l],b}_t-L^{[k],b}_t : t\in [\un{K}^{[n-1]}_0, \bar{K}^{[n]}_b)\}$ is a constant process satisfying 
\begin{align}
L^{[l],b}_t-L^{[k],b}_t=L^{[l],b}_{\bar{K}^{[n]}_b-}-L^{[k],b}_{\bar{K}^{[n]}_b-}, \quad t\in [\un{K}^{[n-1]}_0, \bar{K}^{[n]}_b). 
\end{align} 
From the above and \eqref{72}, the process $\{R^{[l],b}_t-R^{[k],b}_t : t\in [\un{K}^{[n-1]}_0, \bar{K}^{[n]}_b)\}$ is non-increasing (decreases only when $Z^{[l],b}_t-Z^{[k],b}_t$ decreases) and satisfies
\begin{align}
R^{[l],b}_t-R^{[k],b}_t=R^{[l],b}_{\un{K}^{[n-1]}_0-}-R^{[k],b}_{\un{K}^{[n-1]}_0-}+\rbra{\rbra{Z^{[l],b}_t-Z^{[k],b}_t}-\rbra{Z^{[l],b}_{\un{K}^{[n-1]}_0-}-Z^{[k],b}_{\un{K}^{[n-1]}_0-}}}  ,& \\
 t\in [ \un{K}^{[n-1]}_0 , \bar{K}^{[n]}_b)&. 
\end{align}
\par
(ii) We observe the behavior for $t\in[\bar{K}^{[n]}_b, \un{K}^{[n]}_0)$. 
From the definitions of the processes $Z^{[k],b}$ and $Z^{[l],b}$, they behave as the refracted L\'evy processes of $X^{[k]}$ and $X^{[l]}$ at $b$, respectively. 
From the definition of refracted L\'evy processes, we have 
\begin{align}
Z^{[l],b}_t-Z^{[k],b}_t=(Z^{[l],b}_{\bar{K}^{[n]}_b-}-Z^{[k],b}_{\bar{K}^{[n]}_b-})- \Delta(k,l, \bar{K}^{[n]}_b, t) , \label{7}
\end{align}
where 
\begin{align}
\Delta(k,l, s, t){:=} 
\int_s^t \rbra{ h^b(Z^{[l],b}_u)-h^b(Z^{[k],b}_u) } du \label{11}
\end{align}
for $0\leq s\leq t$.  
From \eqref{7} and since $Z^{[l],b}_{\bar{K}^{[n]}_b-}-Z^{[k],b}_{\bar{K}^{[n]}_b-}\in [0, l-k]$ by (i) and $h^b$ is non-decreasing, $t\mapsto Z^{[l],b}_t-Z^{[k],b}_t$ changes continuously, is non-increasing 
until the value reaches $ 0 $ 
and the rate of change over time is no more than $ -\alpha $. 
Thus, if $Z^{[l],b}_t-Z^{[k],b}_t$ takes values below $0$, is takes value $0$ before doing so. 
We confirm that $Z^{[l],b}_t-Z^{[k],b}_t$ does not take values below $0$. 
Assume that $Z^{[l],b}_{s(0)}-Z^{[k],b}_{s(0)}=0$ at time ${s(0)}\in [ \bar{K}^{[n]}_b , \un{K}^{[n]}_0) $. 
Then, by the uniqueness of the strong solution of the stochastic differential equation \eqref{4}, $\Delta(k,l, {s(0)},t)=0$ for $t\in [s(0), \un{K}^{[n]}_0)$. 
In conclusion, the process $\{Z^{[l],b}_t-Z^{[k],b}_t: t\in [ \bar{K}^{[n]}_b , \un{K}^{[n]}_0)\}$ is non-increasing 
and takes values in $[0, Z^{[l],b}_{\bar{K}^{[n]}_b-}-Z^{[k],b}_{\bar{K}^{[n]}_b-}]$. 
In addition, the support of its Stieltjes measure is included in the closure of $\{t \in [\un{K}^{[n-1]}_0, \bar{K}^{[n]}_b) : b\in[Z^{[k],b}_t, Z^{[l],b}_t], Z^{[k],b}_t\neq Z^{[l],b}_t\}$ by \eqref{7}. 
From \eqref{38}, we have 
\begin{align}
L^{[l],b}_t-L^{[k],b}_t= \rbra{L^{[l],b}_{\bar{K}^{[n]}_b-}-L^{[k],b}_{\bar{K}^{[n]}_b-}}+\Delta (k, l, \bar{K}^{[n]}_b , t)
\quad t\in [ \bar{K}^{[n]}_b , \un{K}^{[n]}_0), 
\end{align}
which is non-decreasing. 
From the above arguments and by \eqref{72}, 
the process $\{R^{[l],b}_t-R^{[k],b}_t: t\in [ \bar{K}^{[n]}_b , \un{K}^{[n]}_0)\}$ is a constant process satisfying 
\begin{align}
R^{[l],b}_t-R^{[k],b}_t=R^{[l],b}_{\bar{K}^{[n]}_b-}-R^{[k],b}_{\bar{K}^{[n]}_b-}, \quad t\in [ \bar{K}^{[n]}_b , \un{K}^{[n]}_0). 
\end{align}
\par
By inductively repeating (i) and (ii) for $n\in\bN$, the proof is complete. 
\end{proof}
\begin{Lem}\label{Lem_paths_2}
We assume that 
$X$ belongs to Case $2$. 
We fix $b=0$, $x\in\bR$, and $k, l\in\bR$ with $l-k>0$. 
Then, $\bP_x$-a.s., we have the same statements as (1)--(4) in Lemma \ref{Lem_paths}. 
\end{Lem}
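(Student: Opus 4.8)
The plan is to reduce everything to the explicit description of $Z^0$ in \eqref{36}. For $m\in\{k,l\}$ recall (as in the proof of Lemma \ref{Lem302}) the notation $X^{[m]}_t=X_t+m$ and $X^{[m],(\alpha)}_t=X_t+m-\alpha t$. Since $X^{[m]}$ is again a bounded variation L\'evy process with the same $\delta\in[0,\alpha]$, i.e.\ in Case $2$, the result of Appendix \ref{Sec00B} applies to it, and the resulting process of $\pi^0$ imposed on $X^{[m]}$ is
\begin{align}
Z^{[m],0}_t=X^{[m],(\alpha)}_t-I^{[m]}_t,\qquad I^{[m]}_t:=\inf_{s\in[0,t]}\rbra{X^{[m],(\alpha)}_s\land 0},
\end{align}
with moreover $L^{[m],0}_t=\int_0^t h^0(Z^{[m],0}_s)\,ds$, $R^{[m],0}_t=-\inf_{s\in[0,t]}\rbra{(X^{[m]}_s-L^{[m],0}_s)\land 0}$, and $Z^{[m],0}_t=X^{[m]}_t-L^{[m],0}_t+R^{[m],0}_t$; all this holds $\bP_x$-a.s. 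Statement (1) is then immediate, because the left-hand side of the analogue of \eqref{72} collapses to $X^{[l]}_t-X^{[k]}_t=l-k$.

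For (2), use $X^{[l],(\alpha)}_s=X^{[k],(\alpha)}_s+(l-k)$ together with the elementary identity $\inf_{s\in[0,t]}(f(s)\land 0)=\rbra{\inf_{s\in[0,t]}f(s)}\land 0$: with $g(t):=\inf_{s\in[0,t]}X^{[k],(\alpha)}_s$ (non-increasing in $t$) and $\phi(c):=(c+(l-k))\land 0-c\land 0$ one gets $Z^{[l],0}_t-Z^{[k],0}_t=(l-k)-\phi(g(t))$. A short case distinction on the sign of $c$ relative to $0$ and $-(l-k)$ shows that $\phi$ is non-increasing with values in $[0,l-k]$; hence $t\mapsto\phi(g(t))$ is non-decreasing with values in $[0,l-k]$, which gives (2). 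Next, in Case $2$ we have $h^0(y)=\alpha 1_{(0,\infty)}(y)+\delta 1_{\{0\}}(y)$, so $L^{[m],0}_t=\alpha t-(\alpha-\delta)\int_0^t 1_{\{0\}}(Z^{[m],0}_s)\,ds$. By (2), $Z^{[l],0}_s\ge Z^{[k],0}_s\ge 0$, hence $\{s:Z^{[l],0}_s=0\}\subseteq\{s:Z^{[k],0}_s=0\}$ and
\begin{align}
L^{[l],0}_t-L^{[k],0}_t=(\alpha-\delta)\int_0^t\rbra{1_{\{0\}}(Z^{[k],0}_s)-1_{\{0\}}(Z^{[l],0}_s)}\,ds,
\end{align}
which (using $\alpha\ge\delta$) is non-decreasing and non-negative, with Stieltjes measure $(\alpha-\delta)1_{\{Z^{[k],0}_s=0<Z^{[l],0}_s\}}\,ds$ carried by $\{s:Z^{[k],0}_s=0<Z^{[l],0}_s\}=\{s:0\in[Z^{[k],0}_s,Z^{[l],0}_s],\ Z^{[k],0}_s\ne Z^{[l],0}_s\}$; the bound $L^{[l],0}_t-L^{[k],0}_t\le l-k$ will drop out of (1) once (2) and (4) are known.

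Statement (4) is where the work lies, and I expect it to be the main obstacle: one must see that under $\pi^0$ in Case $2$ the injection $R^{[m],0}$ is actually a pure-jump process, the continuous reflection at $0$ being exactly cancelled by the fact that only the rate $\delta\le\alpha$ of dividends is paid while $Z^{[m],0}$ sits at $0$. To expose this, substitute the two expressions for $Z^{[m],0}$ into one another to obtain $R^{[m],0}_t=-I^{[m]}_t-(\alpha-\delta)\int_0^t 1_{\{0\}}(Z^{[m],0}_s)\,ds$, then apply Lemma \ref{LemB01} to the path $x=X^{[m],(\alpha)}$, whose drift coefficient in the sense of \eqref{97} equals $\delta-\alpha\in(-\infty,0]$ and whose reflection $\check x$ equals $Z^{[m],0}$; the occupation-time term cancels and one is left with
\begin{align}
R^{[m],0}_t=-\rbra{X^{[m]}_0\land 0}-\sum_{s\in(0,t]}\rbra{Z^{[m],0}_{s-}+\rbra{X_s-X_{s-}}}\land 0 .
\end{align}
Since $Z^{[l],0}_{s-}\ge Z^{[k],0}_{s-}$ by (2), each summand is non-increasing in the upper index, so $t\mapsto R^{[l],0}_t-R^{[k],0}_t$ is non-increasing and $\le 0$ (the initial term and every jump of the difference are $\le 0$), and the lower bound $-(l-k)$ follows from (1) with (2)--(3). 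Finally, at any time $s$ with a nonzero jump of $R^{[l],0}-R^{[k],0}$ one must have $Z^{[k],0}_{s-}+(X_s-X_{s-})<0$, whence $Z^{[k],0}_s=0$ and $Z^{[k],0}_{s-}\land Z^{[k],0}_s=0$; this places the Stieltjes measure of $R^{[l],0}-R^{[k],0}$ in the closure of $\{s:Z^{[k],0}_{s-}\land Z^{[k],0}_s=0\}$, completing (4). One could instead track the two paths directly as in part (i) of the proof of Lemma \ref{Lem_paths}, but since here $b$ coincides with the reflection level there is no separate refraction regime to alternate with, and the route through Lemma \ref{LemB01} is the cleaner one.
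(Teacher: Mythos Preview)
Your proof is correct and follows essentially the same route as the paper: both arguments use the explicit representation \eqref{36} of $Z^0$ to handle (1)--(3) directly, and then invoke Lemma \ref{LemB01} on the path $X^{[m],(\alpha)}$ (drift $\delta-\alpha\le 0$ in Case~2) to obtain the pure-jump formula \eqref{118} for $R^{[m],0}$, from which (4) follows. Your write-up is if anything a bit more explicit than the paper's---in particular your $\phi(c)$ argument spells out why the difference in (2) is genuinely non-increasing, which the paper leaves implicit in \eqref{84}---but the underlying strategy is identical.
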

\begin{proof}
The identity \eqref{72} is true for the same reason as in Lemma \ref{Lem_paths} (1).
\par
By \eqref{36}, we have 
\begin{align}
Z^{[l],0}_t&-Z^{[k],0}_t=X^{[l],(\alpha)}_t-X^{[k],(\alpha)}_t
-\rbra{ \inf_{s\in[0, t]}\rbra{X^{[l],(\alpha)}_s\land 0}-\inf_{s\in[0, t]}\rbra{X^{[k],(\alpha)}_s\land 0}}
\\
=&(l-k)
-\rbra{ \inf_{s\in[0, t]}\rbra{X^{[l],(\alpha)}_s\land 0}-\inf_{s\in[0, t]}\rbra{X^{[k],(\alpha)}_s\land 0}}\in[0, l-k],
\quad t\geq 0, \label{84}
\end{align}
and thus we have (2) in Lemma \ref{Lem_paths}. 
By \eqref{38} and \eqref{84}, we have 
\begin{align}
L^{[l],0}_t-L^{[k],0}_t=&\int_0^th^0 (Z^{[l],0}_s)ds-\int_0^th^0(Z^{[k],0}_s)ds\\
=&\int_0^t (\alpha-\delta) 1_{\{Z^{[k],0}_s=0<Z^{[l],0}_s \}}ds\geq 0, \quad t\geq 0 ,\label{85} 
\end{align}
which is non-decreasing. 
Since $U^{\pi^0}$ behaves as $Z^0$ and by \eqref{38} and \eqref{43}, 
we have, for $m\in\bR$,  
\begin{align}
R^{[m],0}_t=&Z^{[m],0}_t-X^{[m]}_t +L^{[l],0}_t\\
=&X^{[m],(\alpha)}_t - \inf_{s\in[0, t]}\rbra{X^{[m],(\alpha)}_s\land 0}-X^{[m]}_t
+\int_0^th^0 (Z^{[m],0}_s)ds\\
=&
- (X^{[m]}_0\land 0)-
\sum_{s\in(0, t]} \rbra{ Z^{[m],0}_{s-}+(X_s-X_{s-} )  }\land 0, \quad t\geq 0. \label{118}
\end{align}
By \eqref{118}, we have 
\begin{align}
\begin{aligned}
R^{[l],0}_t-R^{[k],0}_t=&
 (X^{[k]}_0\land 0)- (X^{[l]}_0\land 0)\\
& +
\sum_{s\in(0, t]}1_{\{s\leq t\}} \bigg{(}\rbra{Z^{[k],0}_{s-}+(X_{s}-X_{s-})}\land 0\\
&\qquad\qquad -\rbra{Z^{[l],0}_{s-}+(X_{s}-X_{s-})}\land 0  \bigg{)}\leq 0,
\end{aligned}
\qquad t\geq0, \label{86}
\end{align}
which is non-increasing {(may decrease only when $Z^{[k],0}_{t-}+(X_{t}-X_{t-})<0$ and thus $Z^{[k],0}_{t}=0$)}. 
By (2), \eqref{85}, \eqref{86}, and \eqref{72}, we have (3) and (4) in Lemma \ref{Lem_paths}. 
\par
The proof is complete. 
\end{proof}
\begin{Lem}\label{Lem_paths_3}
We assume that $(-\infty , 0]$ is irregular for itself for $X^{(\alpha)}$. 
We fix $b=0$, $x\in\bR$, and $k, l\in\bR$ with $l-k>0$. 
Then, $\bP_x$-a.s., we have (1), (2), (4) in Lemma \ref{Lem_paths}. 
In addition, we have 
\begin{align}
L^{[l], 0}_t=L^{[k], 0}_t = \alpha t, \quad t\geq 0. \label{87}
\end{align}
\end{Lem}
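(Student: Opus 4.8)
The plan is to reduce the whole statement to the Skorokhod reflection formula \eqref{36}, using the crucial fact that under the present hypothesis both paths pay dividends at the constant rate $\alpha$, so they can differ only through their reflection terms.

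First I would establish \eqref{87}. Since $X^{[m],(\alpha)}_t=X^{(\alpha)}_t+m$ and the regularity of a set is a local path property invariant under constant shifts, $(-\infty,0]$ is irregular for itself for $X^{[m],(\alpha)}$ as well, for $m=k,l$. Hence, exactly as in Appendix \ref{Sec00B}(i), the refracted--reflected process $Z^{[m],0}$ occupies the level $0$ only on a set of times of zero Lebesgue measure, so that, recalling $h^{0}\equiv\alpha$ on $(0,\infty)$, we have $h^{0}(Z^{[m],0}_s)=\alpha$ for a.e.\ $s$. Consequently, by \eqref{38},
\begin{align*}
L^{[m],0}_t=\int_0^t h^{0}(Z^{[m],0}_s)\,ds=\alpha t,\qquad t\geq 0,\quad m=k,l,
\end{align*}
which is \eqref{87}; in particular $L^{[l],0}_t-L^{[k],0}_t=0$ for all $t$.

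Next I would read off (1), (2) and (4). The identity \eqref{72}, i.e.\ (1), is immediate because its left-hand side equals $X^{[l]}_t-X^{[k]}_t=l-k$, exactly as in the proof of Lemma \ref{Lem_paths}(1). For (2), writing $\un{X}^{(\alpha)}_t:=\inf_{s\leq t}X^{(\alpha)}_s$, formula \eqref{36} gives $Z^{[m],0}_t=X^{[m],(\alpha)}_t-(\un{X}^{(\alpha)}_t+m)\wedge 0$, so that
\begin{align*}
Z^{[l],0}_t-Z^{[k],0}_t=(l-k)-\Bigl((\un{X}^{(\alpha)}_t+l)\wedge 0-(\un{X}^{(\alpha)}_t+k)\wedge 0\Bigr),
\end{align*}
which is precisely the computation leading to \eqref{84}. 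Since $x\mapsto(x+l)\wedge 0-(x+k)\wedge 0$ is $[0,l-k]$-valued and non-increasing, and $t\mapsto\un{X}^{(\alpha)}_t$ is non-increasing, the process $\{Z^{[l],0}_t-Z^{[k],0}_t:t\geq 0\}$ is $[0,l-k]$-valued and non-increasing, which is (2). Finally, for (4), combining (1) with \eqref{87} gives $R^{[l],0}_t-R^{[k],0}_t=(Z^{[l],0}_t-Z^{[k],0}_t)-(l-k)$, which by (2) is non-increasing with values in $[-(l-k),0]$; and from \eqref{38} with $L^{[m],0}_s=\alpha s$ we get $R^{[m],0}_t=-(\un{X}^{(\alpha)}_t+m)\wedge 0$, hence
\begin{align*}
R^{[l],0}_t-R^{[k],0}_t=(-\un{X}^{(\alpha)}_t-l)^{+}-(-\un{X}^{(\alpha)}_t-k)^{+},
\end{align*}
which, as a function of $-\un{X}^{(\alpha)}_t$, is constant outside the band $[k,l]$. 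Thus the Stieltjes measure of $R^{[l],0}-R^{[k],0}$ is carried by the closure of the set of times $t$ at which $\un{X}^{(\alpha)}$ decreases while lying in $[-l,-k]$; at any such $t$ one has $X^{(\alpha)}_t=\un{X}^{(\alpha)}_t\in[-l,-k]\subseteq(-\infty,0]$, whence $Z^{[k],0}_t=X^{[k],(\alpha)}_t-(\un{X}^{(\alpha)}_t+k)=X^{(\alpha)}_t-\un{X}^{(\alpha)}_t=0$, so the support is contained in the closure of $\{t\geq 0:Z^{[k],0}_t=0\}\subseteq\{t\geq 0:Z^{[k],0}_{t-}\wedge Z^{[k],0}_t=0\}$. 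This is (4).

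I do not expect a serious obstacle: the only point requiring genuine care is the first step, namely the assertion that $Z^{[m],0}$ spends zero Lebesgue time at the level $0$ under the stated irregularity, and this is exactly the mechanism already used and justified in Appendix \ref{Sec00B}(i). Everything else is a deterministic manipulation of \eqref{36} and \eqref{38}, running completely parallel to the corresponding parts of the proofs of Lemmas \ref{Lem_paths} and \ref{Lem_paths_2}.
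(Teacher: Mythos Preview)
Your proposal is correct and follows essentially the same route as the paper: both establish \eqref{87} from the irregularity of $(-\infty,0]$ for $X^{(\alpha)}$ (so that $Z^{[m],0}$ spends no Lebesgue time at $0$), obtain (1) trivially, obtain (2) from the explicit Skorokhod formula \eqref{36} exactly as in \eqref{84}, and deduce (4) from (1), (2) and \eqref{87}. The only difference is cosmetic: for (4) the paper simply writes $R^{[l],0}_t-R^{[k],0}_t=(Z^{[l],0}_t-Z^{[k],0}_t)-(l-k)$ and appeals to ``the same argument as in (i) of the proof of Lemma~\ref{Lem_paths}'' for the support claim, whereas you give the explicit formula $R^{[m],0}_t=(-\un{X}^{(\alpha)}_t-m)^{+}$ and analyze it directly; both arguments are the same in substance. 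One small slip: your aside ``$\un{X}^{(\alpha)}_t\in[-l,-k]\subseteq(-\infty,0]$'' is not justified (nothing forces $k\geq 0$), but it is irrelevant to the computation, which only uses $\un{X}^{(\alpha)}_t\leq -k$.
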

\begin{proof}
The identity \eqref{72} is true for the same reason as in Lemma \ref{Lem_paths} (1).
\par
Since $0$ is irregular for $(-\infty , 0]$ for $X^{(\alpha)}$, the process $Z^0$ takes a value $0$ only a finite number of times in a finite time. 
Thus, by \eqref{38}, we have \eqref{87}. 
Since \eqref{84} holds, we have (2). 
By \eqref{72} and \eqref{87}, we have 
\begin{align}
R^{[l],0}_t -R^{[k],0}_t=
(Z^{[l],b}_t-Z^{[k],b}_t )-(l-k), 
\quad t\geq 0,
\end{align}
and by the same argument as in (i) in the proof of Lemma \ref{Lem_paths}, we have (4).
The proof is complete. 
\end{proof}

\section{A property of the Laplace transform of a hitting time} \label{Sec0D}
In this appendix, 
we give a lemma for the continuity of the function $\un{\nu}$. 

\begin{Lem}\label{LemC01}
We assume that $0$ is regular for $\bR\backslash\{0\}$ for $Y^0$. 
Then, 
the function $\un{\nu}$ is right-continuous at $b^\ast$. 
In addition, if $b^\ast>0$ then the function $\un{\nu}$ is left-continuous at $b^\ast$. 
\end{Lem}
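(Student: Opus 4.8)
The plan is to analyze the function $\un\nu(x) = \beta\bE_x[e^{-qK^{p^\ast}_0}]$ near $x = b^\ast$ by decomposing the hitting behaviour of $Y^{b^\ast}$ (and the associated killed/reflected dynamics encoded in $K^{p^\ast}_0$) according to what the process does before it first enters $(b^\ast,\infty)$ or first drops to $0$. Recall $K^{p^\ast}_0 = \kappa^{b^\ast,-}_0 1_{\{V_{p^\ast}=1\}} + T^{b^\ast,-}_0 1_{\{V_{p^\ast}=0\}}$, so $\un\nu(x) = p^\ast\beta\bE_x[e^{-q\kappa^{b^\ast,-}_0}] + (1-p^\ast)\beta\bE_x[e^{-qT^{b^\ast,-}_0}]$; it therefore suffices to prove one-sided continuity at $b^\ast$ for each of $x\mapsto\bE_x[e^{-q\kappa^{b^\ast,-}_0}]$ and $x\mapsto\bE_x[e^{-qT^{b^\ast,-}_0}]$ (when $p^\ast=1$ only the first is needed). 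Since $\un\nu$ is known to be non-increasing (from Lemma \ref{Lem302}), one-sided continuity reduces to showing the appropriate limit equals $\un\nu(b^\ast)$, and the only thing that can go wrong is a jump discontinuity; I would rule this out using the regularity hypothesis.

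First, for right-continuity at $b^\ast$: fix $x > b^\ast$ with $x\downarrow b^\ast$. Using the strong Markov property at $\tau^{(\alpha),-}_{b^\ast}$ (the first time $X^{(\alpha)}$, equivalently $Y^{b^\ast}$ started above $b^\ast$, drops to level $b^\ast$ — note that above $b^\ast$ the refracted process $Y^{b^\ast}$ moves like $X^{(\alpha)}$), one writes, for the $\kappa$-part,
\begin{align}
\bE_x[e^{-q\kappa^{b^\ast,-}_0}] = \bE_x\!\left[e^{-q\tau^{(\alpha),-}_{b^\ast}}\,\bE_{Y^{b^\ast}_{\tau^{(\alpha),-}_{b^\ast}}}[e^{-q\kappa^{b^\ast,-}_0}]\right],
\end{align}
and similarly for the $T$-part. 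As $x\downarrow b^\ast$ we have $\tau^{(\alpha),-}_{b^\ast}\downarrow 0$ $\bP_x$-a.s.\ (this uses a continuity-of-hitting-times fact for $X^{(\alpha)}$, which is precisely the kind of statement proved in Appendix \ref{Sec0D}), and $Y^{b^\ast}_{\tau^{(\alpha),-}_{b^\ast}}\to b^\ast$; since $\un\nu$ is monotone and bounded, the dominated convergence theorem together with the monotone squeeze ($\un\nu(x)\le\un\nu(b^\ast)$ for $x>b^\ast$, while the above representation forces $\liminf_{x\downarrow b^\ast}\un\nu(x)\ge\un\nu(b^\ast)$ once $\tau^{(\alpha),-}_{b^\ast}\to 0$ and the overshoot vanishes) gives $\lim_{x\downarrow b^\ast}\un\nu(x) = \un\nu(b^\ast)$. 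Here the hypothesis that $0$ is regular for $\bR\setminus\{0\}$ for $Y^0$ — hence $0$ is regular for $(-\infty,0)$ for $X^{(\alpha)}$ — is what guarantees the overshoot $Y^{b^\ast}_{\tau^{(\alpha),-}_{b^\ast}} - b^\ast \to 0$ and that $\tau^{(\alpha),-}_{b^\ast}\to 0$ cleanly (no positive probability of a ``stuck'' approach).

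Second, for left-continuity at $b^\ast$ when $b^\ast>0$: fix $x < b^\ast$ with $x\uparrow b^\ast$, and condition on $\tau^+_{b^\ast}\wedge T^{p^\ast}_0$ where $T^{p^\ast}_0 := \tau^-_0 1_{\{V_{p^\ast}=1\}} + (\inf\{t>0:X_t\le 0\})1_{\{V_{p^\ast}=0\}}$, exactly as in the computation \eqref{127}–\eqref{128} in the proof of Lemma \ref{Lem505}. Below $b^\ast$ the refracted process moves like $X$ itself, so the strong Markov property gives
\begin{align}
\un\nu(x) = \bE_x\!\left[e^{-q\tau^+_{b^\ast}}\un\nu(X_{\tau^+_{b^\ast}});\tau^+_{b^\ast}<T^{p^\ast}_0\right] + \beta\bE_x\!\left[e^{-qT^{p^\ast}_0};T^{p^\ast}_0<\tau^+_{b^\ast}\right].
\end{align}
The regularity hypothesis implies $0$ is regular for $(0,\infty)$ for $X$, so as $x\uparrow b^\ast$ we get $\tau^+_{b^\ast}\to 0$ and $X_{\tau^+_{b^\ast}}\to b^\ast$ $\bP_x$-a.s., while $T^{p^\ast}_0$ stays bounded away from $0$ (because $x$ is bounded away from $0$, using $b^\ast>0$); hence the second term vanishes, the first term tends to $\un\nu(b^\ast+)$, and combined with $\un\nu(x)\ge\un\nu(b^\ast)$ for $x<b^\ast$ this squeezes $\lim_{x\uparrow b^\ast}\un\nu(x)=\un\nu(b^\ast)$ (one also invokes the already-established right-continuity to identify $\un\nu(b^\ast+)=\un\nu(b^\ast)$). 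The main obstacle I anticipate is the bookkeeping around the randomized time $T^{p^\ast}_0$ and justifying the overshoot/hitting-time convergences rigorously; this is where I would lean on the hitting-time continuity lemma of Appendix \ref{Sec0D} and on the regularity assumption, rather than attempting direct estimates on the Laplace transforms.
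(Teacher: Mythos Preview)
Your overall strategy---use the strong Markov property to relate $\un\nu(x)$ to $\un\nu(b^\ast)$ via a short first-passage time, then let the passage time vanish---is the same as the paper's. However, there is a genuine gap: the two implications you draw from the hypothesis are false in general. The assumption is only that $0$ is regular for $\bR\setminus\{0\}$ for $Y^0$; this does \emph{not} force $0$ to be regular for $(-\infty,0)$ for $X^{(\alpha)}$, nor regular for $(0,\infty)$ for $X$. For a concrete counterexample to the first claim, take $X$ of bounded variation with drift $\delta>\alpha$ (Case~1): then $X^{(\alpha)}$ has positive drift, so $0$ is irregular for $(-\infty,0]$ for $X^{(\alpha)}$, yet $Y^0$ at $0$ moves like $X$ (positive drift) and is regular for $(0,\infty)$, hence for $\bR\setminus\{0\}$. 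In this situation your argument for right-continuity breaks down: $\tau^{(\alpha),-}_{b^\ast}$ does not tend to $0$ as $x\downarrow b^\ast$. Symmetrically, if $\delta<0$ then $0$ is irregular for $[0,\infty)$ for $X$, and your left-continuity argument fails.

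The paper repairs this by a case split. For right-continuity, if $0$ is regular for $(-\infty,0)$ for $X^{(\alpha)}$ it argues exactly as you do; but if not, then necessarily $0$ is regular for $(0,\infty)$ for $Y^0$ (and in fact irregular for $(-\infty,0]$), and one instead starts from $b^\ast$ and applies the strong Markov property at $\kappa^{b^\ast,+}_{b^\ast+\varepsilon}$, obtaining an inequality of the form
\[
\un\nu(b^\ast)\le \bE_{b^\ast}\!\big[e^{-q\kappa^{b^\ast,+}_{b^\ast+\varepsilon}};\kappa^{b^\ast,+}_{b^\ast+\varepsilon}<K^{p^\ast}_0\big]\un\nu(b^\ast+\varepsilon)+\bE_{b^\ast}\!\big[e^{-qK^{p^\ast}_0};K^{p^\ast}_0<\kappa^{b^\ast,+}_{b^\ast+\varepsilon}\big],
\]
and now the upward regularity makes $\kappa^{b^\ast,+}_{b^\ast+\varepsilon}\to 0$ as $\varepsilon\downarrow 0$, yielding $\un\nu(b^\ast)\le\lim_{\varepsilon\downarrow 0}\un\nu(b^\ast+\varepsilon)$. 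The left-continuity proof is symmetric, splitting on whether $0$ is regular for $(0,\infty)$ for $X$. Also note that your appeal to ``the hitting-time continuity lemma of Appendix~\ref{Sec0D}'' is circular: Lemma~\ref{LemC01} \emph{is} the content of Appendix~\ref{Sec0D}.
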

\begin{proof}
Note that the limits $\lim_{\varepsilon\downarrow0}\un{\nu}({b^\ast-\varepsilon}) $ and $\lim_{\varepsilon\downarrow0}\un{\nu}({b^\ast+\varepsilon}) $ exist since the function $\un{\nu}$ is non-increasing. 
\par
(i) We prove that
\begin{align}
\un{\nu}(b^\ast)\leq \lim_{\varepsilon\downarrow0}\un{\nu}(b^\ast+\varepsilon).\label{119}
\end{align}
We assume that $0$ is regular for $(-\infty, 0)$ for $X^{(\alpha)}$. 
By the strong Markov property and since the function $\un{\nu}$ is non-increasing on $\bR$, we have, for $\varepsilon>0$,  
\begin{align}
\un{\nu}(b^\ast+\varepsilon)
=\bE_{b^\ast+\varepsilon }\sbra{e^{-q{T^{b^\ast, -}_0} } \un{\nu}\rbra{Y^{b^\ast}_{{T^{b^\ast, -}_0}}} }
\geq \bE_{b^\ast+\varepsilon}\sbra{e^{-q{T^{b^\ast, -}_0} } }\un{\nu}(b^\ast)
=\bE_{b^\ast+\varepsilon}\sbra{e^{-q\tau^{(\alpha), -}_{b^\ast} } }\un{\nu}(b^\ast).
\end{align}
Thus, by taking the limit as $\varepsilon\downarrow 0$, we have \eqref{119}. 
We assume that $0$ is irregular for $(-\infty, 0)$ for $X^{(\alpha)}$. 
Then, for the process $Y^0$, $0$ is irregular for $(-\infty, 0)$ and thus $0$ is regular for $(0, \infty)$ by the hypothesis. 
In addition, 
$0$ is irregular for $(-\infty, 0]$ for $Y^0$ since $Y^0$ does not take a value less than or equal to $0$ for a while after taking a value greater than $0$. 
By the strong Markov property and since the function $\un{\nu}$ is non-increasing on $\bR$, we have, for $\varepsilon>0$,  
\begin{align}
\un{\nu}(b^\ast)
&= \bE_{b^\ast}\sbra{e^{-q\kappa^{b^\ast, +}_{b+\varepsilon} } \un{\nu}\rbra{Y^{b^\ast}_{\kappa^{b^\ast,+}_{b+\varepsilon}}} }
+
\bE_{b^\ast}\sbra{e^{-q K^{p^\ast}_0};K^{p^\ast}_0<\kappa^{b^\ast, +}_{b^\ast+\varepsilon}}\\
\leq \bE_{b^\ast}&\sbra{e^{-q\kappa^{b^\ast, +}_{b^\ast+\varepsilon} } ;\kappa^{b^\ast, +}_{b^\ast+\varepsilon}< K^{p^\ast}_0}
\un{\nu}(b^\ast+\varepsilon)+
\bE_{b^\ast}\sbra{e^{-q K^{p^\ast}_0};K^{p^\ast}_0<\kappa^{b^\ast, +}_{b^\ast+\varepsilon}}. \label{a023}
\end{align}
Thus, by taking the limit as $\varepsilon\downarrow 0$, we have \eqref{119}. 
\par
(ii) We assume that $b^\ast > 0$ and prove that 
\begin{align}
\un{\nu}(b^\ast)\geq \lim_{\varepsilon\downarrow0}\un{\nu}(b^\ast-\varepsilon).\label{120}
\end{align}
We assume that $0$ is regular for $(0, \infty)$ for $X$. 
By the same argument as in the proof of \eqref{a023}, we have, for $\varepsilon \in(0,b^\ast)$,  
\begin{align}
\un{\nu}({b^\ast-\varepsilon})
\leq &\bE_{b^\ast-\varepsilon}\sbra{e^{-q\kappa^{b^\ast, +}_{b^\ast} } ;\kappa^{b^\ast, +}_{b^\ast}< K^{p^\ast}_0}
\un{\nu}(b^\ast)+
\bE_{b^\ast-\varepsilon}\sbra{e^{-q K^{p^\ast}_0};K^{p^\ast}_0<\kappa^{b^\ast, +}_{b^\ast}}
\\
=&\bE_{b^\ast-\varepsilon}\sbra{e^{-q\tau^+_{b^\ast} } ;\tau^+_{b^\ast}< T^{p^\ast}_0}
\un{\nu}(b^\ast)+
\bE_{b^\ast-\varepsilon}\sbra{e^{-q T^{p^\ast}_0};T^{p^\ast}_0<\tau^+_{b^\ast}}.
\end{align}
Thus, by taking the limit as $\varepsilon\downarrow 0$, we have \eqref{120}. 
We assume that $0$ is irregular for $(0, \infty)$ for $X$. 
Then, for the process $Y^0$, $0$ is irregular for $(0, \infty)$ and thus $0$ is regular for $(- \infty, 0)$ by the hypothesis. 
By the strong Markov property and since the function $\un{\nu}$ is non-increasing on $\bR$, we have, for $\varepsilon\in(0, b^\ast)$, 
\begin{align}
\un{\nu} (b^\ast)
&= \bE_{b^\ast}\sbra{e^{-q\kappa^{b^\ast, -}_{b^\ast-\varepsilon} } \un{\nu}\rbra{Y^{b^\ast}_{\kappa^{b^\ast,-}_{b^\ast-\varepsilon}}} }
\geq \bE_{b^\ast}\sbra{e^{-q\kappa^{b^\ast,-}_{b^\ast-\varepsilon} } }\un{\nu}({b^\ast-\varepsilon}). 
\label{a024}
\end{align}
Thus, by taking the limit as $\varepsilon\downarrow 0$, we have \eqref{120}. 
\par
(iii) By \eqref{119}, \eqref{120}, and the fact that the function $\un{\nu}$ is non-increasing on $\bR$, 
$\un{\nu}(b^\ast)$ is equal to $\lim_{\varepsilon\downarrow0}\un{\nu}(b^\ast+\varepsilon)$ regardless of the value of $b^\ast$ and is equal to $\lim_{\varepsilon\downarrow0}\un{\nu}(b^\ast-\varepsilon)$ when $b^\ast>0$. 
The proof is complete. 
\end{proof}

\section{An example satisfying Assumption \ref{Ass402}} \label{Ape00D}
In this appendix, we prove that hyper-exponential L\'evy processes with unbounded variation paths 
satisfies the assumptions {that are necessary to apply our main result}. 
\par
We assume that the L\'evy measure $\Pi$ is absolutely continuous with respect to the Lebesgue measure and has a density 
\begin{align}
\pi(x) {:=}\sum_{k=1}^{\tilde{N}}\tilde{a}_k \tilde{\rho}_k e^{\tilde{\rho}_k x} 1_{\{x<0\}}+ \sum_{k=1}^N a_k \rho_k e^{-\rho_k x} 1_{\{x>0\}}, 
\end{align}
where $N, \tilde{N}\in\bN\cup\{0\}$, and ${\{a_k\}}_{k=1}^N$, ${\{\tilde{a}_k\}}_{k=1}^{\tilde{N}}$, ${\{\rho_k\}}_{k=1}^N $, and ${\{\tilde{\rho}_k\}}_{k=1}^{\tilde{N}} $ are sets of positive values satisfying
\begin{align}
\rho_1<\rho_2< \cdots < \rho_N, \qquad \tilde{\rho}_1<\tilde{\rho}_2< \cdots < \tilde{\rho}_{\tilde{N}}.
\end{align}
Then $X$ is called a hyper-exponential L\'evy process. 
In addition, we assume that $\sigma>0$. Then $X$ has unbounded variation paths. 
\par
We have 
\begin{align}
\int_{(-\infty , -1)} (-x) \Pi (dx)=\sum_{k=1}^{\tilde{N}}\int_{(-\infty , -1)} (-x)\tilde{a}_k \tilde{\rho}_k e^{\tilde{\rho}_k x} dx
=\sum_{k=1}^{\tilde{N}}\tilde{a}_k \rbra{1+ \frac{1}{\tilde{\rho}_k}}e^{-\tilde{\rho}_k}< \infty, 
\end{align}
and thus $X$ satisfies Assumption \ref{Ass201}. 
By Corollary \ref{Lem205}, $X$ satisfies Assumption \ref{Ass202a}. 
By \cite[Remark 1, Theorem 1, 3 and 5]{KuzKypPar2012}, we have, for $b>0$ and a non-negative bounded measurable function $f$,
\begin{align}
\bE_x&\sbra{e^{-q \tau^{(\alpha), -}_0 }f (X^{(\alpha)}_{\tau^-_0}) }
=
\sum_{k=0}^{\tilde{N}}\rbra{{\sum_{l=1}^{\tilde{N}+1} \tilde{a}^{(\alpha)}_{k,l} \tilde{\zeta}^{(\alpha)}_l e^{- \tilde{\zeta}^{(\alpha)}_l x}}
 }\tilde{\cA}_k f ,\quad x>0 , \label{68}\\
\bE_x&\sbra{e^{-q (\tau^-_0 \land \tau^+_b )}f (X_{\tau^-_0\land \tau^+_b}) }\\
&
\begin{aligned}
=
&\sum_{k=0}^{\tilde{N}}\rbra{\sum_{l=1}^{\tilde{N}+1} \tilde{a}^-_{k,l} \tilde{\zeta}_l e^{- \tilde{\zeta}_l x}
+\sum_{l=1}^{N+1} a^-_{k,l} \zeta_l e^{-\zeta_l (b-x)}  }\tilde{\cA}_k f\\
&+\sum_{k=0}^N\rbra{\sum_{l=1}^{\tilde{N}+1} \tilde{a}^+_{k,l} \tilde{\zeta}_l e^{- \tilde{\zeta}_l (b-x)}
+\sum_{l=1}^{N+1} a^+_{k,l} \zeta_l e^{-\zeta_l x}  }\cA^b_k f, \quad x\in (0, b), 
\end{aligned}
\label{69}
\end{align}
where $\zeta^{(\alpha)}_l$, 
$\zeta_l$, and $\tilde{\zeta}_l$ are values in $(0, \infty]$, $\tilde{a}^{(\alpha)}_{k, l}$, $\tilde{a}^-_{k, l}$, $a^-_{k,l}$, $\tilde{a}^+_{k, l}$, and $a^+_{k,l}$ are real values for $k, l\in\bN\cup\{0\}$, and
\begin{align}
\cA^b_k f=&
\begin{cases}
f(b) ,  \quad  &k=0 \\
\int_0^\infty {\rho}_k e^{-\rho_k y}f(y+b)dy  , \quad &k=1, 2,\cdots N, 
\end{cases}
\\
\tilde{\cA}_k f=&
\begin{cases}
f(0), \quad &k=0, \\
\int_{-\infty}^0 \tilde{\rho}_k e^{\tilde{\rho}_k y}f(y)dy, \quad& k=1, 2,\cdots, \tilde{N}.
\end{cases}
\end{align}
Here, we assume that $\infty \times e^{- \infty}=0$. 
By the strong Markov property at $\kappa^{b,-}_b$ and $\kappa^{b,+}_b$, 
and from \eqref{68} and \eqref{69}, we have 
\begin{align}
&\nu_b (x)=
\begin{cases}
\sum_{k=0}^{\tilde{N}}\sum_{l=1}^{\tilde{N}+1} \tilde{a}^{(\alpha)}_{k,l} \tilde{\zeta}^{(\alpha)}_l e^{- \tilde{\zeta}^{(\alpha)}_l (x-b)}
\tilde{\cA}_k  \tilde{\nu}_b ,\quad &x>b,\\
\begin{aligned}
&\sum_{k=0}^{\tilde{N}}\rbra{\sum_{l=1}^{\tilde{N}+1} \tilde{a}^-_{k,l} \tilde{\zeta}_l e^{- \tilde{\zeta}_l x}
+\sum_{l=1}^{N+1} a^-_{k,l} \zeta_l e^{-\zeta_l (b-x)}  }
\\
&+\sum_{k=0}^N\rbra{\sum_{l=1}^{\tilde{N}+1} \tilde{a}^+_{k,l} \tilde{\zeta}_l e^{- \tilde{\zeta}_l (b-x)}
+\sum_{l=1}^{N+1} a^+_{k,l} \zeta_l e^{-\zeta_l x}  }  \cA^b_k \nu_b, 
\end{aligned}
\quad &x\in (0, b), 
\end{cases}
\end{align}
where 
\begin{align}
\tilde{\nu}_b (x) = \nu_b(x+b), \quad x\in\bR. 
\end{align}
From the above equation and since the function $\nu_b$ is continuous on $(0, \infty)$ by the same proof as that of Lemma \ref{LemC01}, 
the function $\nu_b$ has the following density on $(0, \infty)$ with respect to the Lebesgue measure: 
\begin{align}
\nu_b^\prime (x)=
\begin{cases}
-\sum_{k=0}^{\tilde{N}}\rbra{\sum_{l=1}^{\tilde{N}+1} \tilde{a}^{{(\alpha)}}_{k,l} \tilde{\zeta}^{(\alpha)2}_l e^{- \tilde{\zeta}^{(\alpha)}_l (x-b)}
 }\tilde{\cA}_k  \tilde{\nu}_b ,\quad &x>b,\\
 \begin{aligned}
&\sum_{k=0}^{\tilde{N}}\rbra{-\sum_{l=1}^{\tilde{N}+1} \tilde{a}^-_{k,l} \tilde{\zeta}_l^2 e^{- \tilde{\zeta}_l x}
+\sum_{l=1}^{N+1} a^-_{k,l} \zeta_l^2 e^{-\zeta_l (b-x)}  }
\\
&+\sum_{k=0}^N\rbra{\sum_{l=1}^{\tilde{N}+1} \tilde{a}^+_{k,l} \tilde{\zeta}_l^2 e^{- \tilde{\zeta}_l (b-x)}
-\sum_{l=1}^{N+1} a^+_{k,l} \zeta_l^2 e^{-\zeta_l x}  }\cA^b_k \nu_b, 
\end{aligned}
\quad &x\in (0, b). 
\end{cases}
\end{align}
Thus, $X$ satisfies Assumption \ref{Ass402}. 
Therefore, we can apply Theorem \ref{Thm502} for the hyper-exponential L\'evy processes with unbounded variation paths.

\section{Proof of Lemma \ref{Lem401}}\label{prfverification}
We write $w_\varepsilon(x)= w(x+\varepsilon)$, $w^\prime_\varepsilon(x)= w^\prime(x+\varepsilon)$, and $w^{\prime\prime}_\varepsilon(x)= w^{\prime \prime}(x+\varepsilon)$ for $\varepsilon>0$. Then, the function $w_\varepsilon$ belongs to $C^{(1)}_{\text{line}}$ (resp.\ $C^{(2)}_{\text{line}}$) when $X$ has bounded (resp.\ unbounded) variation paths and satisfies
\begin{align}
\sup_{r\in[0, \alpha]} \rbra{\cL w_{\varepsilon} (x)-qw_\varepsilon(x)-rw_\varepsilon^\prime(x)+r}\leq 0, \quad x\geq 0, \label{veriprf1}\\
w_\varepsilon^\prime (x)\leq \beta, \quad x\geq 0, \label{veriprf2}\\
\inf_{x\in[0, \infty)}w_\varepsilon(x)>-m, \quad\text{ for some }m. \label{veriprf3}
\end{align}
We fix a strategy $\pi\in\cA$. 
Using the Meyer--It\^o formula (see, e.g., \cite[Theorem IV.70]{Pro2005} (resp.\ \cite[Theorem IV.71]{Pro2005}) when $X$ has bounded (resp.\ unbounded) variation paths) for the process $\{e^{-qt}w_\varepsilon (U^\pi_t): t\geq 0\}$, 
we have, $\bP_x$-a.s. with $x\in\bR$, 
\begin{align}
&e^{-qt}w_\varepsilon (U^\pi_t)-w_\varepsilon (U^\pi_{0-})=-\int_0^t qe^{-qs} w_\varepsilon (U^\pi_s)ds 
+\frac{\sigma^2}{2}\int_0^te^{-qs}w_\varepsilon^{\prime\prime}(U^\pi_s)ds
\\
&+\int_{[0, t]}e^{-qs}w_\varepsilon^\prime(U^\pi_{s-})dU^\pi_s+ \sum_{s\in[0, t]}\rbra{e^{-qs}w_\varepsilon(U^\pi_s)-e^{-qs}w_\varepsilon(U^\pi_{s-})-e^{-qs}w_\varepsilon^\prime(U^\pi_{s-})( U^\pi_s-U^\pi_{s-})}\\
&
\begin{aligned}
=&\int_0^t e^{-qs}(\cL w_\varepsilon(U^\pi_s)-q w_\varepsilon(U^\pi_s))ds+M_t
-\int_0^t e^{-qs}w_\varepsilon^\prime (U^\pi_{s-})l^\pi_s ds\\
&+\int_{[0, t]}e^{-qs}w_\varepsilon^\prime(U^\pi_{s-})dR^{\pi, c}_s
-\sum_{s\in[0, t]}e^{-qs}\rbra{w_\varepsilon(U^\pi_s-(R^\pi_s-R^\pi_{s-})) -w_\varepsilon (U^\pi_{s})},
\end{aligned}
\quad t\geq 0 ,
\label{64}
\end{align} 
where $U^\pi_{0-}=X_0$, $\{B_t: t\geq0 \}$ is the standard Brownian motion, $\cN$ is the Poisson random measure on $([0, \infty)\times \bR, \cB [0, \infty)\times\cB (\bR))$ associated with $ds \times \Pi(dx)$, 
\begin{align}
M_t=&\sigma \int_0^t e^{-qs}w_\varepsilon^\prime(U^\pi_{s-})dB_s\\
&+\int_{[0,t]\times \bR}e^{-qs}\rbra{w_\varepsilon (U^\pi_{s-}+y)-w_\varepsilon (U^\pi_{s-})}
\rbra{\cN(ds\times dy)-ds \times \Pi(dy)},
\end{align}
and 
\begin{align}
R^{\pi, c}_t=R^{\pi}_t-\sum_{s\in[0, t]} (R^\pi_s-R^\pi_{s-}), \quad t\geq 0. 
\end{align}
Note that the process $\{M_t: t\geq 0\}$ is a local martingale.  
By \eqref{veriprf1} and \eqref{veriprf2}, we have, for $s \geq 0$, 
\begin{align}
\begin{aligned}
&(\cL-q)w_\varepsilon(U^\pi_s)\leq \inf_{r\in[0, \alpha]}r(w_\varepsilon^\prime (U^\pi_s)-1), 
\quad w_\varepsilon^\prime (U^\pi_{s-})\leq \beta, \\
&w_\varepsilon(U^\pi_s-( R^\pi_s-R^\pi_{s-})) -w_\varepsilon (U^\pi_{s})\geq -\beta ( R^\pi_s-R^\pi_{s-}). 
\end{aligned}
\label{65}
\end{align}
From \eqref{64} and \eqref{65}, we have, $\bP_x$-a.s. with $x\in\bR$, 
\begin{align}
e^{-qt}w_\varepsilon (U^\pi_t)-w_\varepsilon (U^\pi_{0-})
\leq &\int_0^t e^{-qs} \rbra{\inf_{r\in[0, \alpha]}r(w_\varepsilon^\prime (U^\pi_s)-1)-w_\varepsilon^\prime (U^\pi_{s-})l^\pi_s }ds \\
& \qquad \qquad \qquad \qquad + M_t+\beta\int_{[0, t]}e^{-qs}dR^{\pi}_s\\
\leq&-\int_0^t e^{-qs}l^\pi_s ds+\beta\int_{[0, t]}e^{-qs}dR^{\pi}_s+ M_t, \quad t\geq 0. 
\label{66}
\end{align}
We take a localizing sequence of stopping times ${\{T_n\}}_{n\in\bN}$ for $M$. 
By taking the expectation using $\bP_x$ with $x\in\bR$ at time $t\land T_n$ and by \eqref{veriprf3} {and \eqref{66}}, we have 
\begin{align}
-m\bE_x \sbra{e^{-q(t\land T_n)}}-w_\varepsilon (x)&\leq 
\bE_x \sbra{e^{-q(t\land T_n)}w_\varepsilon (U^\pi_{t\land T_n})}-w_\varepsilon (x)\\
&\leq 
\bE_x \sbra{-\int_0^{t\land T_n} e^{-qs}l^\pi_s ds+\beta\int_{[0, t\land T_n]}e^{-qs}dR^{\pi}_s}. 
\end{align}
By taking the limit as $t\uparrow \infty$ and $n\uparrow \infty$, we have 
\begin{align}
w_\varepsilon (x)\geq v_\pi (x). \label{67}
\end{align}
Since \eqref{67} is true for all $\varepsilon>0$, $x\in\bR$, and $\pi\in\cA$, 
by taking the limit as $\varepsilon \downarrow 0$, the proof is complete. 
\begin{Rem}
Since the density of $w$ or the density of $w^\prime$ may not be locally bounded on $[0, \infty)$, 
the integrals in \eqref{64} may not be well defined {for $w$}. 
To avoid discussing the definition of \eqref{64}, we used $w_\varepsilon$ in the proof of Lemma \ref{Lem401}.  
\end{Rem}
{
\section{The proofs of the lemmas in Section \ref{Sec007}}\label{Ape00H}
In this appendix, we give the proofs of Lemmas \ref{Lem702} and \ref{Lem705}. 
}
{
\subsection{The proof of Lemma \ref{Lem702}}\label{Ape00H1}}
{We fix $x, b \in\bR$ and $\omega\in\Omega$ such that $X_0 (\omega)= x$, $t\mapsto X_t$ is c\`adl\`ag and the differential equation \eqref{4} has a unique solution for $\alpha>0$.}
\par
{
(i) We take $\alpha_1, \alpha_2\in(0, \infty]$ with $\alpha_1<\alpha_2$ and prove that 
\begin{align}
Y^{\alpha_1, b}_t\geq Y^{\alpha_2, b}_t, \qquad t\geq 0.\label{Rev005}
\end{align}
We assume that $\alpha_2<\infty$. Then, we have 
\begin{align}
Y^{\alpha_1, b}_t- Y^{\alpha_2, b}_t
=\int_0^t (h^{\alpha_2, b}(Y^{\alpha_2, b}_s)-h^{\alpha_1, b}(Y^{\alpha_1, b}_s)) ds,
\qquad t\geq 0,
\end{align}
where
\begin{align}
h^{\alpha, b}(y){:=}
\begin{cases}
\alpha1_{(b, \infty)}(y) \quad &\text{ in Case $1$}, \\
\alpha1_{(b, \infty)}(y)+\delta 1_{\{b\}}(y) \quad &\text{ in Case $2$}.
\end{cases}
\end{align}
Thus, the map $t\mapsto Y^{\alpha_1, b}_t- Y^{\alpha_2, b}_t$ changes continuously. 
If $Y^{\alpha_1, b}_{t_0}- Y^{\alpha_2, b}_{t_0}<0$ for $t_0>0$, then we have 
$u(t_0)<t_0$, 
where $u(t):=\sup\{ u\leq t: Y^{\alpha_1, b}_u- Y^{\alpha_2, b}_u\geq0\}
$ 
for $t\geq 0$, and 
$Y^{\alpha_1, b}_t- Y^{\alpha_2, b}_t<0$ for $t\in (u(t_0), t_0]$. Thus, we have $h^{\alpha_2, b}(Y^{\alpha_2, b}_t)-h^{\alpha_1, b}(Y^{\alpha_1, b}_t) \geq 0$ for $t\in (u(t_0), t_0]$ and 
\begin{align}
Y^{\alpha_1, b}_{t_0}- Y^{\alpha_2, b}_{t_0}=&
(Y^{\alpha_1, b}_{t_0}- Y^{\alpha_2, b}_{t_0})-(Y^{\alpha_1, b}_{u(t_0)}- Y^{\alpha_2, b}_{u(t_0)})\\
=&\int_{u(t_0)}^{t_0}  (h^{\alpha_2, b}(Y^{\alpha_2, b}_s)-h^{\alpha_1, b}(Y^{\alpha_1, b}_s)) ds
\geq0 ,
\end{align}
which contradicts $Y^{\alpha_1, b}_{t_0}- Y^{\alpha_2, b}_{t_0}<0$. 
Therefore, the inequality \eqref{Rev005} is true. 
}
\par
{
We assume that $\alpha_2=\infty$. 
If $Y^{\alpha_1, b}_{t_0}- Y^{\infty, b}_{t_0}<0$ for some $t_0>0$, then we have $Y^{\alpha_1, b}_t< Y^{\infty, b}_t\leq b$ for $t \in (u(t_0), t_0] $,
and thus $Y^{\alpha_1, b}_{u(t_0)}\leq Y^{\infty, b}_{u(t_0)}\leq b$. 
By the definition of $Y^{\alpha_1, b}$, on $[u(t_0), t_0] $, it behaves as the L\'evy process $X$. 
Thus, the map $t\mapsto Y^{\alpha_1, b}_t- Y^{\infty, b}_t$ is non-decreasing on $[u(t_0), t_0] $, and we have $Y^{\alpha_1, b}_{u(t_0)}- Y^{\infty, b}_{u(t_0)}\leq Y^{\alpha_1, b}_{t_0}- Y^{\infty, b}_{t_0}<0$ and $Y^{\alpha_1, b}_{u(t_0)-}- Y^{\infty, b}_{u(t_0)-}\geq 0$. 
However, since all negative jumps of $Y^{\alpha_1, b}$ and $Y^{\infty, b}$ occur at the same time and have the same size, the above situation does not occur. 
Therefore, the inequality \eqref{Rev005} is true. 
}
\par
{
(ii) We prove \eqref{Rev006} when $X$ does not have positive jumps at $t\geq 0$. 
For $t \in [0, \tau^+_b)$, we have $Y^{\alpha, b}_t =Y^{\infty, b}_t=X_t $ for $\alpha>0$. 
Below, we consider the convergence \eqref{Rev006} after $\tau^+_b$. 
}\par
{
We assume that $X$ does not have positive jumps at $t_1 >\tau^+_b$ and prove that 
\begin{align}
\lim_{\alpha \uparrow \infty}Y^{\alpha, b}_{t_1}\leq b . \label{Rev007}
\end{align}
Here, note that $\lim_{\alpha \uparrow \infty}Y^{\alpha, b}_{t_1}$ exists by (i). 
If $Y^{\alpha, b}_{t_1}\leq b$ for some $\alpha>0$, then \eqref{Rev007} is obvious by (i), so we only check cases where $Y^{\alpha, b}_{t_1}> b$ for all $\alpha>0$.
We fix $\alpha_1>0$ and $\varepsilon\in (0, Y^{\alpha_1, b}_{t_1}-b)$, then there exists $\delta \in (0 , t_1)$ such that 
\begin{align}
\inf \{ X_t : t \in (t_1 -\delta ,  t_1)\}>X_{t_1}-\varepsilon . \label{Rev008}
\end{align}
There exists $\alpha_2>0$ and $t_2 \in (t_1 -\delta ,  t_1)$ such that $Y^{\alpha_2, b}_{t_2}\leq b$. 
In fact, if it does not exist, then we have, by \eqref{Rev005}, \eqref{Rev008} and the definition of $Y^{\alpha, b}$, 
\begin{align}
\lim_{\alpha\uparrow\infty}Y^{\alpha, b}_{t_1}\leq 
\lim_{\alpha\uparrow\infty}\rbra{Y^{\alpha_1, b}_{t_1-\delta}+\varepsilon- \alpha \delta}=-\infty
\end{align}
which contradicts that $Y^{\alpha, b}_{t_1}>b$ for $\alpha>0$. Thus, we have 
\begin{align}
\lim_{\alpha\uparrow\infty}Y^{\alpha, b}_{t_1} \leq Y^{\alpha_2, b}_{t_1}\leq Y^{\alpha_2, b}_{t_2}+\varepsilon\leq b+\varepsilon, \label{Rev009}
\end{align}
where in the second inequality, we used \eqref{Rev008}. 
The inequalities \eqref{Rev009} are true for $\varepsilon>0$, we have \eqref{Rev007}. }
\par
{
We assume that $Y^{\infty , b}_t =b$ and $X$ does not have positive jumps at $t\geq\tau^+_b$, then we have \eqref{Rev006} by (i) and \eqref{Rev007}. }
\par{
We assume that $Y^{\infty , b}_t <b$ with $t \geq \tau^+_b$, then we can define $r(t):= \sup \{s \in[0, t] : Y^{\infty, b}_s =b\}$. 
If $X$ does not have negative jumps at $r(t)$, then we have $Y^{\infty,b}_{r(t)}=b$, and for $\varepsilon>0$ there exists $t_3 \in (r(t), t)$ such that $Y^{\infty , b}_{t_3} \in (b-\varepsilon , b)$ and $X$ does not have jump at $t_3$. 
Thus, we have 
\begin{align}
\begin{aligned}
\lim_{\alpha\uparrow\infty} Y^{\alpha, b}_t \leq \lim_{\alpha\uparrow\infty} Y^{\alpha, b}_{t_3}
+(X_t-X_{t_3})\leq b+(X_t-X_{t_3})<&Y^{\infty , b}_{t_3}+\varepsilon +(X_t-X_{t_3})\\
=& Y^{\infty , b}_t+\varepsilon.     
\end{aligned}\label{Rev010}
\end{align}
Since \eqref{Rev010} is true for $\varepsilon>0$ and by (i), we have \eqref{Rev006}. 
If $X$ has a negative jump at $r(t)$, then $Y^{\infty , b}_{r(t)-}=b$, and for $\varepsilon>0$ there exists $t_4 \in (0, r(t))$ such that $\absol{X_{r(t)-}-X_{t_4} } <\varepsilon$ and $X$ does not have jump at $t_4$. 
Thus, we have
\begin{align}
\begin{aligned}
&\lim_{\alpha\uparrow\infty} Y^{\alpha, b}_t \leq \lim_{\alpha\uparrow\infty} Y^{\alpha, b}_{t_4}
+(X_t-X_{t_4})\leq 
 \lim_{\alpha\uparrow\infty} Y^{\alpha, b}_{t_4}
+(X_{r(t)-}-X_{t_4})+(X_{t}-X_{r(t)-})
\\
&<b+\varepsilon +(X_t-X_{r(t)-})= Y^{\infty,b}_{r(t)-}+\varepsilon +(X_t-X_{r(t)-})
= Y^{\infty , b}_t+\varepsilon.     
\end{aligned}
\label{Rev011}
\end{align}
Since \eqref{Rev011} is true for $\varepsilon>0$ and by (i), we have \eqref{Rev006}. 
}
\par
{The proof is complete.}

{\subsection{The proof of Lemma \ref{Lem705}}\label{Ape00H2}}

{We fix $x\in\bR$ and $\omega\in\Omega$ such that $X_0 (\omega)= x$, $t\mapsto X_t$ is c\`adl\`ag and the differential equation \eqref{4} has a unique solution for $\alpha>0$.}
\par
{(i) We assume that $b>0$ and prove that, for $\alpha_1, \alpha_2>0$ with $\alpha_1 < \alpha_2$,
\begin{align}
Z^{\alpha_1, b}_t \geq  Z^{\alpha_2, b}_t , \quad L^{\alpha_1, b}_t \leq  L^{\alpha_2, b}_t, \quad R^{\alpha_1, b}_t \leq R^{\alpha_2, b}_t, \qquad t\geq 0. \label{Rev017}
\end{align}
We define the stopping times $\bar{J}^{[n]}_b$ and $\un{J}^{[n]}_b$ as follows: 
for $n\in\bN$, 
\begin{align}
\bar{J}^{[n]}_b := \inf \{t> \un{J}^{[n-1]}_0 : Z^{\alpha_2,b}_t \geq b \}, \quad
\un{J}^{[n]}_0 := \inf \{t> \bar{J}^{[n]}_b : Z^{\alpha_2,b}_t =0\},
\end{align}
where $\un{J}^{[0]}_0 :=0$. 
We prove \eqref{Rev017} on $[\un{J}^{[n-1]}_0 , \bar{J}^{[n]}_b)$ and $[\bar{J}^{[n]}_b,\un{J}^{[n]}_0 ) $ for $n\in\bN$, inductively.
We assume that $Z^{\alpha_1, b}_{0-} =  Z^{\alpha_2, b}_{0-}=X_0=x$, $L^{\alpha_1, b}_{0-} =  L^{\alpha_2, b}_{0-}=0$, $R^{\alpha_1, b}_{0-} = R^{\alpha_2, b}_{0-}=0$. 
}
\par
{
We assume that \eqref{Rev017} is true for $t < \un{J}^{[n-1]}_0$ and prove that \eqref{Rev017} on $[\un{J}^{[n-1]}_0 , \bar{J}^{[n]}_b)$. 
On $[\un{J}^{[n-1]}_0 , \bar{J}^{[n]}_b)$, $Z^{\alpha_2, b}$ behaves as the reflected L\'evy process $\un{Y}=\{\un{Y}_t :=X_t - \inf_{s\in[0, t]}X_s\land 0 :t\geq 0\}$ since it takes values in $[0, b)$. 
Thus, we can regard $Z^{\alpha_2, b}$ as behaving the refracted--reflected L\'evy process at $b $ with $\alpha_1$ on $[\un{J}^{[n-1]}_0 , \bar{J}^{[n]}_b)$. Therefore, by Lemma \ref{Lem_paths} and \ref{Lem_paths_2} and Remark \ref{RemD02}, we have $Z^{\alpha_1, b}_t \geq  Z^{\alpha_2, b}_t$, $L^{\alpha_1, b}_t -R^{\alpha_1, b}_{\un{J}^{[n-1]}_0-} \leq  L^{\alpha_2, b}_t-L^{\alpha_2, b}_{\un{J}^{[n-1]}_0-}=0$ and 
$R^{\alpha_1, b}_t -R^{\alpha_1, b}_{\un{J}^{[n-1]}_0-} \leq  R^{\alpha_2, b}_t-R^{\alpha_2, b}_{\un{J}^{[n-1]}_0-}$ for $t\in[\un{J}^{[n-1]}_0 , \bar{J}^{[n]}_b)$. 
Thus, we have $L^{\alpha_1, b}_t\geq  L^{\alpha_2, b}_t$ and $R^{\alpha_1, b}_t\leq  R^{\alpha_2, b}_t$ for $t\in[\un{J}^{[n-1]}_0 , \bar{J}^{[n]}_b)$
Therefore, \eqref{Rev017} is true on $[\un{J}^{[n-1]}_0 , \bar{J}^{[n]}_b)$. 
}
\par
{
We assume that \eqref{Rev017} is true for $t < \bar{J}^{[n]}_b$ and prove that \eqref{Rev017} on $[ \bar{J}^{[n]}_b, \un{J}^{[n]}_0 )$. 
After time $\bar{J}^{[n]}_b$, $Z^{\alpha_1, b}$ behaves as refracted L\'evy process at $b$ with $\alpha_1$ until it takes value $0$. 
On the other hand, $Z^{\alpha_2, b}$ behaves as refracted L\'evy process at $b$ with $\alpha_2$ when $\alpha_2<\infty$ and as reflected L\'evy process at $b$ when $\alpha_2=\infty$ until it takes value $0$. 
By the above facts together with Lemma \ref{Lem_paths} (2) and \ref{Lem_paths_2}, Remark \ref{RemD02} and Lemma \ref{Lem702}, we have 
$Z^{\alpha_1, b}_t \geq \hat{Z}^{\alpha_1, b}_t \geq  Z^{\alpha_2, b}_t$, where $\hat{Z}^{\alpha_1, b}:=\{\hat{Z}^{\alpha_1, b}_s : s\geq \un{J}^{[n-1]}_0\}$ is the refracted--reflected L\'evy process at $b$ with $\alpha_1$ satisfying $\hat{Z}^{\alpha_1, b}_{\un{J}^{[n-1]}_0}=Z^{\alpha_2, b}_{\un{J}^{[n-1]}_0}$, and thus
$R^{\alpha_1, b}_t=R^{\alpha_1, b}_{\un{J}^{[n-1]}_0-} \leq R^{\alpha_2, b}_{\un{J}^{[n-1]}_0-} = R^{\alpha_2, b}_t$ for  
$t \in [\un{J}^{[n-1]}_0 , \bar{J}^{[n]}_b)$. 
In addition, by \eqref{Rev020} and the above, we have, for $t\in [ \bar{J}^{[n]}_b, \un{J}^{[n]}_0 )$, 
\begin{align}
L^{\alpha_1, b}_t=X_t+R^{\alpha_1, b}_t-Z^{\alpha_1, b}_t 
\leq X_t+R^{\alpha_2, b}_t-Z^{\alpha_2, b}_t =L^{\alpha_2, b}_t. \label{Rev018}
\end{align} 
Therefore, \eqref{Rev017} is true on $[ \bar{J}^{[n]}_b, \un{J}^{[n]}_0 )$. 
}
\par
{
(ii)  
We assume $b>0$ and prove \eqref{Rev019} for a.e. $t\geq 0$. 
We prove \eqref{Rev019} for a.e. $t\in (\un{J}^{[n-1]}_0, \un{J}^{[n]}_0 ] $ for $n\in\bN$, inductively.
}
\par
{
We assume that \eqref{Rev019} is true at $\un{J}^{[n-1]}_0$ when $n\geq 2$. 
Since $Z^{\infty,b}$ behaves as $Z^{\alpha,b}$ on $ [\un{J}^{[n-1]}_0, \bar{J}^{[n]}_b )$ and 
by Lemma \ref{Lem_paths} and \ref{Lem_paths_2} and Remark \ref{RemD02}, we have, for $\alpha>0$ and $t\in [\un{J}^{[n-1]}_0, \bar{J}^{[n]}_b )$
\begin{align}
Z^{\alpha, b}_t -Z^{\infty, b}_t \in &[0, Z^{\alpha, b}_{\un{J}^{[n-1]}_0} -Z^{\infty, b}_{\un{J}^{[n-1]}_0}], \\
(R^{\alpha, b}_t -R^{\alpha, b}_{\un{J}^{[n-1]}_0})=&(R^{\infty, b}_t-R^{\infty, b}_{\un{J}^{[n-1]}_0})
\in [-( Z^{\alpha, b}_{\un{J}^{[n-1]}_0} -Z^{\infty, b}_{\un{J}^{[n-1]}_0}), 0]. 
\end{align}
Only in this place we assume that $\un{J}^{[0]}_0=0-$. 
From the above and \eqref{Rev020}, the convergence \eqref{Rev019} is true on $(\un{J}^{[n-1]}_0, \bar{J}^{[n]}_b ) $. 
Since $Z^{\alpha , b}_{  \bar{J}^{[n]}_b} \geq Z^{\infty , b}_{  \bar{J}^{[n]}_b}=b$ and by the same argument as that of (ii) of the proof of Lemma \ref{Lem702}, we have $\lim_{\alpha\uparrow\infty}Z^{\alpha , b}_t =Z^{\infty , b}_t$ for 
$[\bar{J}^{[n]}_b, \un{J}^{[n]}_0 ) $ when $X$ does not have positive jumps at $t$. In addition, 
$R^{\alpha, b}$ and $R^{\alpha,b}$ do not increases on $[\bar{J}^{[n]}_b, \un{J}^{[n]}_0 ) $. 
Thus, by \eqref{Rev020}, we have $\lim_{\alpha\uparrow \infty}L^{\alpha , b}_t =L^{\infty , b}_t$ for 
$[\bar{J}^{[n]}_b, \un{J}^{[n]}_0 ) $ when $X$ does not have positive jumps at $t$. 
We also have $\lim_{\alpha\uparrow\infty}\rbra{Z^{\alpha, b}_{{\un{J}^{[n]}_0- }}+(X_{\un{J}^{[n]}_0 }-X_{\un{J}^{[n]}_0- })}=Z^{\infty, b}_{{\un{J}^{[n]}_0- }}+(X_{\un{J}^{[n]}_0 }-X_{\un{J}^{[n]}_0- })$ by the same argument as above and since $X$ does not have positive jumps at $\un{J}_0^{[n]}$, 
so we have 
$\lim_{\alpha\uparrow\infty}Z^{\alpha, b}_{{\un{J}^{[n]}_0 }}=Z^{\infty, b}_{{\un{J}^{[n]}_0}}=0$ and 
\begin{align}
\lim_{\alpha\uparrow\infty}R^{\alpha, b}_{{\un{J}^{[n]}_0 }}=&\lim_{\alpha\uparrow\infty}R^{\alpha ,b}_{\bar{J}^{[n]}_b}
-\lim_{\alpha\uparrow\infty}\rbra{\rbra{Z^{\alpha, b}_{{\un{J}^{[n]}_0- }}+(X_{\un{J}^{[n]}_0 }-X_{\un{J}^{[n]}_0- })}\land 0}\\
=&R^{\infty ,b}_{\bar{J}^{[n]}_b}-\rbra{\rbra{Z^{\infty, b}_{{\un{J}^{[n]}_0- }}+(X_{\un{J}^{[n]}_0 }-X_{\un{J}^{[n]}_0- })}\land 0}
=R^{\infty, b}_{{\un{J}^{[n]}_0}}. 
\end{align}
Therefore, by \eqref{Rev020} and the above, we have $\lim_{\alpha\uparrow\infty}L^{\alpha, b}_{{\un{J}^{[n]}_0 }}=L^{\infty, b}_{{\un{J}^{[n]}_0}}$ and \eqref{Rev019} is true on $(\un{J}^{[n-1]}_0, \un{J}^{[n]}_0 ] $ when $X$ does not have positive jumps.
}
\par
{
(iii) We assume that $b=0$ and $X$ has bounded variation paths and non-negative drift. 
Under the condition above, we prove \eqref{Rev017} for $\alpha_1, \alpha_2\in(0, \infty]$ with $\alpha_1<\alpha_2$. 
}
\par
{If $\alpha_2 =\infty$, we have $Z^{\alpha_1, 0}_t \geq 0=Z^{\infty , 0}_t $ for $t\geq 0$ by \cite[Section 3]{Nob2021}.
On the other hand, if $\alpha_2 < \infty$, we have $Z^{\alpha_1, 0}_t \geq Z^{\alpha_2 , 0}_t $ for $t\geq 0$ by \eqref{36} and the facts that 
\begin{align}
X^{(\alpha_1)}_t -X^{(\alpha_2)}_t=(\alpha_2 - \alpha_1)t,\  
\inf_{s\in[0, t]}\rbra{X^{(\alpha_1)}_s\land 0}
-\inf_{s\in[0, t]}\rbra{X^{(\alpha_2)}_s\land 0}\leq (\alpha_2 - \alpha_1)t, \quad t\geq 0. 
\end{align}
By \eqref{36}, \eqref{38}, Lemma \ref{LemB01}, Remark \ref{RemA03} and  \cite[Section 3]{Nob2021}, we have, for $t\geq 0 $, 
\begin{align}
R^{\alpha_2 , 0}_t =&-(\delta\land 0)\int_0^t1_{\{0\}}(Z^{\alpha_2 ,b}_{t}) dt -\sum_{s\in[0, t]}(Z^{\alpha_2 ,b}_{s-}+X_s -X_{s-})1_{\{Z^{\alpha_2 ,b}_{s-}+X_s -X_{s-}<0\}}\\
\geq& -(\delta\land 0)\int_0^t1_{\{0\}}(Z^{\alpha_1 ,b}_{t}) dt 
-\sum_{s\in[0, t]}(Z^{\alpha_1 ,b}_{s-}+X_s -X_{s-})1_{\{Z^{\alpha_1 ,b}_{s-}+X_s -X_{s-}<0\}}=
R^{\alpha_1 , 0}_t. 
\end{align}
Thus, by \eqref{Rev020}, we have \eqref{Rev018} for $t\geq 0$, and so \eqref{Rev017} is true. 
}
\par
{
(iv) We give the same assumption as (iii) and prove \eqref{Rev019} for a.e. $t\geq 0$. 
We can prove $\lim_{\alpha\uparrow \infty} Z^{\alpha ,0}_t= 0$ when $X$ does not have jump at $t\geq 0$ by the similar argument as that of the proof of \eqref{Rev007}, thus we omit it. 
By \eqref{36}, \eqref{38}, Lemma \ref{LemB01}, Remark \ref{RemA03}, the monotone convergence theorem and \cite[Section 3]{Nob2021}, we have, for $t\geq 0$,
\begin{align}
\lim_{\alpha\uparrow\infty }
R^{\alpha , 0}_t
=&\lim_{\alpha\uparrow\infty }\rbra{-(\delta\land 0)\int_0^t1_{\{0\}}(Z^{\alpha ,b}_{t}) dt -\sum_{s\in[0, t]}(U^{\alpha ,b}_{s-}+X_s -X_{s-})1_{\{Z^{\alpha ,b}_{s-}+X_s -X_{s-}<0\}}}\\
=&-(\delta\land 0)t -\sum_{s\in[0, t]}(X_s -X_{s-})1_{\{X_s -X_{s-}<0\}}
=R^{\infty , 0}_t.
\end{align}
Therefore, by \eqref{Rev020}, we have $\lim_{\alpha\uparrow\infty }L^{\alpha , 0}_t=L^{\infty , 0}_t$ and also \eqref{Rev019} when $X$ does not have positive jumps at $t\geq 0$. 
}
\par
{The proof is complete.}


\bibliographystyle{jplain}
\bibliography{NOBA_references_05} 

\begin{thebibliography}{10}

\bibitem{AsuTak1997}
S.~Asmussen and M.~Taksar.
\newblock Controlled diffusion models for optimal dividend pay-out.
\newblock {\em Insurance Math. Econom.}, Vol.~20, No.~1, pp. 1--15, 1997.

\bibitem{Bog2006}
E.~V. Boguslavskaya.
\newblock Optimization problems in financial mathematics: explicit solutions
  for diffusion models.
\newblock {\em Ph.D. Thesis, University of Amsterdam}, 2006.

\bibitem{CzaPerRolYam2019}
I.~Czarna, J.~L. P\'{e}rez, T.~Rolski, and K.~Yamazaki.
\newblock Fluctuation theory for level-dependent {L}\'{e}vy risk processes.
\newblock {\em Stochastic Process. Appl.}, Vol. 129, No.~12, pp. 5406--5449,
  2019.

\bibitem{GerShi2006}
H.~U. Gerber and E.~S.~W. Shiu.
\newblock {O}n optimal dividend strategies in the compound {P}oisson model.
\newblock {\em N. Am. Actuar. J.}, Vol.~10, No.~3, p.~84, 2006.

\bibitem{HerPerYam2016}
D.~Hern\'andez-Hern\'andez, J.~L. P\'erez, and K.~Yamazaki.
\newblock Optimality of refraction strategies for spectrally negative {L}\'evy
  processes.
\newblock {\em SIAM J. Control Optim.}, Vol.~54, No.~3, pp. 1126--1156, 2016.

\bibitem{JeaShi1995}
M.~Jeanblanc-Picqu\'e and A.~N. Shiryaev.
\newblock Optimization of the flow of dividends.
\newblock {\em Uspekhi Mat. Nauk}, Vol.~50, No. 2(302), pp. 25--46, 1995.

\bibitem{KuzKypPar2012}
A.~Kuznetsov, A.~E. Kyprianou, and J.~C. Pardo.
\newblock Meromorphic {L}\'evy processes and their fluctuation identities.
\newblock {\em Ann. Appl. Probab.}, Vol.~22, No.~3, pp. 1101--1135, 2012.

\bibitem{Kyp2014}
A.~E. Kyprianou.
\newblock {\em Fluctuations of {L}\'evy processes with applications}.
\newblock Universitext. Springer, Heidelberg, second edition, 2014.
\newblock Introductory lectures.

\bibitem{KypLoe2010}
A.~E. Kyprianou and R.~L. Loeffen.
\newblock Refracted {L}\'evy processes.
\newblock {\em Ann. Inst. Henri Poincar\'e Probab. Stat.}, Vol.~46, No.~1, pp.
  24--44, 2010.

\bibitem{KypLoePer2012}
A.~E. Kyprianou, R.~Loeffen, and J.~L. P\'{e}rez.
\newblock Optimal control with absolutely continuous strategies for spectrally
  negative {L}\'{e}vy processes.
\newblock {\em J. Appl. Probab.}, Vol.~49, No.~1, pp. 150--166, 2012.

\bibitem{Nob2021}
K.~Noba.
\newblock On the optimality of double barrier strategies for {L}\'{e}vy
  processes.
\newblock {\em Stochastic Process. Appl.}, Vol. 131, pp. 73--102, 2021.

\bibitem{NobYam2022}
K.~Noba and K.~Yamazaki.
\newblock On singular control for {L}\'evy processes.
\newblock {\em Math. Oper. Res.}, Published Online, 2022.

\bibitem{PerYam2016}
J.~L. P\'{e}rez and K.~Yamazaki.
\newblock Refraction-reflection strategies in the dual model.
\newblock {\em Astin Bull.}, Vol.~47, No.~1, pp. 199--238, 2017.

\bibitem{PerYam2018}
J.~L. P\'erez and K.~Yamazaki.
\newblock On the refracted-reflected spectrally negative {L}\'evy processes.
\newblock {\em Stochastic Process. Appl.}, Vol. 128, No.~1, pp. 306--331, 2018.

\bibitem{PerYamYu2018}
J.~L. P\'{e}rez, K.~Yamazaki, and X.~Yu.
\newblock On the bail-out optimal dividend problem.
\newblock {\em J. Optim. Theory Appl.}, Vol. 179, No.~2, pp. 553--568, 2018.

\bibitem{Pro2005}
P.~E. Protter.
\newblock {\em Stochastic integration and differential equations}, Vol.~21 of
  {\em Stochastic Modelling and Applied Probability}.
\newblock Springer-Verlag, Berlin, 2005.
\newblock Second edition. Version 2.1, Corrected third printing.

\bibitem{RevYor1999}
D.~Revuz and M.~Yor.
\newblock {\em Continuous martingales and {B}rownian motion}, Vol. 293 of {\em
  Grundlehren der Mathematischen Wissenschaften [Fundamental Principles of
  Mathematical Sciences]}.
\newblock Springer-Verlag, Berlin, third edition, 1999.

\bibitem{Sit2005}
R.~Situ.
\newblock {\em Theory of stochastic differential equations with jumps and
  applications}.
\newblock Mathematical and Analytical Techniques with Applications to
  Engineering. Springer, New York, 2005.
\newblock Mathematical and analytical techniques with applications to
  engineering.

\bibitem{Tsu2021}
H.~Tsukada.
\newblock Pathwise uniqueness of stochastic differential equations driven by
  brownian motions and finite variation {L}\'evy processes.
\newblock {\em Stochastics}, pp. 1--20, 2021.

\bibitem{YinWenZha2013}
C.~Yin, Y.~Wen, and Y.~Zhao.
\newblock On the optimal dividend problem for a spectrally positive {L}\'evy
  process.
\newblock {\em Astin Bull.}, Vol.~44, No.~3, pp. 635--651, 2014.

\end{thebibliography}

\end{document}